\newtheorem{theorem}{Theorem}[section]
\newtheorem{corollary}[theorem]{Corollary}
\newtheorem{lemma}[theorem]{Lemma}
\newtheorem{proposition}[theorem]{Proposition}
\theoremstyle{definition}
\newtheorem{definition}{Definition}
\newtheorem{remark}{Remark}
\newtheorem*{acknow}{Acknowledgments}
\newcommand{\R}{\mathbb{R}}
\newcommand{\C}{\mathbb{C}}
\newcommand{\St}{\mathbb{S}}
\newcommand{\F}{\mathbb{F}}
\newcommand{\Z}{\mathbb{Z}}
\def\sn{\mathrm{sn}}
\def\cn{\mathrm{cn}}
\def\dn{\mathrm{dn}}
\begin{document}

\title[A new look at equivariant minimal Lagrangian surfaces]
{A new look at equivariant minimal Lagrangian surfaces in $\mathbb{C}P^2$}
\author{Josef F.~Dorfmeister}
\address{Fakult\"{a}t F\"{u}r Mathematik, TU-M\"{u}nchen, Boltzmann Str. 3,
D-85747, Garching, Germany}
\email{dorfm@ma.tum.de}
\author{Hui Ma}
\address{Department of Mathematical Sciences, Tsinghua University,
Beijing 100084, P.R. China} \email{hma@math.tsinghua.edu.cn}

\maketitle

\begin{abstract} 
In this note, we present a new look at translationally equivariant minimal Lagrangian surfaces in the complex projective plane via the loop group method.
\end{abstract}

\section{Introduction}
\label{sec:Intro}

Minimal Lagrangian surfaces in the complex projective plane $\mathbb{C}P^2$ endowed with the Fubini-Study metric are of great interest from the point of view of differential geometry, symplectic geometry and mathematical physics. 
(\cite{CU94, Sharipov, MM, Haskins_Am04, McIn, Haskins_Inv04}). 
They give rise to  local models of singular special Lagrangian $3$-folds in Calabi-Yau 3-folds, hence play an important role in the development of mirror symmetry (\cite{Joyce}). The Gauss-Codazzi equations for minimal Lagrangian surfaces in $\mathbb{C}P^2$
are given by
\begin{equation*}
u_{z\bar z}=e^{-2u}|\psi|^2-e^{u},\quad 
\psi_{\bar z}=0,
\end{equation*}
where $g=2e^{u}dzd\bar{z}$ is the Riemannian metric of a Riemann surface and $\psi dz^3$ is a holomorphic cubic differential defined on the surface. 
Since any minimal Lagrangian surface of genus zero in $\mathbb{C}P^2$ is totally geodesic, it is the standard  immersion of $S^2$ in $\mathbb{C}P^2$ (\cite{Yau1974, Naitoh-Takeuchi82}).  In a nice paper \cite{CU94} by Castro and Urbano, they reduced the PDE above to an ODE and constructed translationally equivariant minimal Lagrangian tori in $\C P^2$. 
Later on it was shown that any minimal Lagrangian immersed surface of genus one in $\mathbb{C}P^2$ can be  constructed in terms of algebraically completely integrable systems (\cite{Sharipov, MM, McIn}). 
Recently, a loop group method introduced by Dorfmeister, Pedit and Wu (\cite{DPW}) has proven to be  efficient in constructing surfaces related to a family of flat connections with nontrivial topology. 
As a preparation for the construction of minimal Lagrangian surfaces with \lq\lq ends\rq\rq \, in $\C P^2$,   
we would like to present a new look at translationally equivariant minimal Lagrangian surfaces in $\C P^2$ via the loop group method in this note.

The paper is organized as follows: In Section \ref{sec:miL}, we recall the basic set-up for minimal Lagrangian surfaces in $\C P^2$. In Section \ref{Sec:Equiv}, we  explain the definition of equivariant minimal Lagrangian surfaces in $\C P^2$.  In Section \ref{Sec:trans equiv}, we show that every translationally equivariant minimal Lagrangian surface in $\mathbb{C}P^2$ is generated by a degree one constant potential.
In Section \ref{Sec:Iwasawa}, we present an explicit Iwasawa decomposition for any translationally equivariant minimal Lagrangian surface. In Section \ref{Sec:equiv cylinder tori}, we discuss the periodicity condition for translationally equivariant minimal Lagrangian cylinders and tori. Finally, we compare our loop group  approach to the work of  Castro-Urbano (\cite{CU94}).

%%%%%%%%%%%%%%%%%%%%%%%%%%%%%
\section{Minimal Lagrangian surfaces in $\mathbb{C}P^2$ }
\label{sec:miL}
%%%%%%%%%%%%%%%%%%%%%%%%%%%%%

We recall briefly the basic set-up for minimal Lagrangian surfaces in $\mathbb{C}P^2$. For details we refer to \cite{MM} and references therein.

Let $\mathbb CP^2$ be the complex projective plane endowed with the Fubini-Study metric of constant holomorphic
sectional curvature 4.
Let $f:M\rightarrow {\mathbb C}P^2$ be a Lagrangian immersion of an oriented
surface. The induced metric on $M$ generates a
conformal structure with
respect to which the metric is $g=2e^{u} dzd{\bar z}$, and 
where $z=x+iy$ is a local conformal coordinate on $M$ and $u$
is a real-valued function defined on $M$ locally.
For any Lagrangian immersion $f$, there exists a local horizontal lift $F: {\rm U}\rightarrow S^5(1) =\{Z\in \mathbb{C}^3 | \, Z\cdot \bar{Z}=1\}$,
where 
$Z\cdot \overline{W}=\sum_{k=1}^{3} z_{k} \overline{w_{k}}$
denotes the Hermitian inner product for any $Z=(z_1,z_2,z_3)$ and $W=(w_1,w_2,w_3)\in \mathbb{C}^3$. 
In fact, choose any local lift $F$. Then $dF\cdot \bar{F}$ is a closed one-form. Hence there exists a real function $\eta\in C^\infty ({\rm U})$ locally such that $i d\eta= dF\cdot \bar F$.
Then $\tilde{F}=e^{-i\eta} F$ is a local 
horizontal lift of $f$ to $S^5(1)$.
We can therefore assume
\begin{equation}\label{horizontal}
F_z \cdot {\overline F}=F_{\bar z}\cdot {\overline F} =0.
\end{equation}
 
The fact that the metric $g$ is conformal is equivalent to
\begin{equation}\label{conformal}
\begin{split}
&F_z\cdot \overline{F_z}=F_{\bar z}\cdot \overline{F_{\bar z}}=e^{u},\\
& F_z\cdot \overline{F_{\bar z}}=0.
\end{split}
\end{equation}
Thus 
$\mathcal{F}=(e^{-\frac{u}{2}}F_z, e^{-\frac{u}{2}}F_{\bar z}, F)$ is a Hermitian orthonormal moving frame 
globally defined on the universal cover of $M$.
Furthermore, let us assume that $f$ is minimal now.
It follows from \eqref{horizontal} and \eqref{conformal} and the minimality of 
 $f$ that $\mathcal{F}$ satisfies the frame equations
\begin{equation}\label{eq:frame1}
\mathcal{F}_{z}=\mathcal{F} {\mathcal U}, \quad \mathcal{F}_{\bar z}=\mathcal{F} {\mathcal V},
\end{equation}
where
\begin{equation}\label{eq:UV1}
{\mathcal U}=\left(\begin{array}{ccc}
                   \frac{u_z}{2} & 0 & e^{\frac{u}{2}} \\
      e^{-u}\psi  &-\frac{u_z}{2}  & 0 \\
       0 & -e^{\frac{u}{2}}  &0 \\
     \end{array}
   \right),
   \quad
{\mathcal V}=\left(
     \begin{array}{ccc}
      -\frac{u_{\bar z}}{2}  &  - e^{-u}\bar\psi  &0 \\
       0& \frac{u_{\bar z}}{2} &e^{\frac{u}{2}} \\
       -e^{\frac{u}{2}}  &0 &0 \\
     \end{array}
   \right),
\end{equation}
with
\begin{equation}\label{eq:phipsi}
\psi=F_{zz}\cdot\overline{F_{\bar z}}.
\end{equation}
The cubic differential $\Psi=\psi dz^3$ is globally defined on $M$  and
independent of the choice of the local lift. The differential $\Psi$ is called the Hopf differential of $f$.

The compatibility condition of  the equations \eqref{eq:frame1} is
${\mathcal U}_{\bar z}-{\mathcal V}_z =[{\mathcal U},{\mathcal V}]$,  
and using \eqref{eq:UV1} this turns out to be equivalent to
 \begin{eqnarray}
u_{z\bar z}+e^u-e^{-2u}|\psi|^2&=0,\label{eq:mLsurfaces}\\
\psi_{\bar z}&=0.\label{eq:Codazzi}
\end{eqnarray}
Notice that the integrability conditions  \eqref{eq:mLsurfaces}-\eqref{eq:Codazzi} are invariant under the transformation
$\psi \rightarrow \nu \psi$ for any $\nu \in S^1$.

This implies that after replacing $\psi$ in \eqref{eq:UV1}   by  $\psi^\nu=\nu\psi$ the equations \eqref{eq:frame1} are still integrable.
Therefore, the solution $\mathcal{F}(z,\bar{z}, \nu)$  to this changed system is a frame of some minimal Lagrangian surface $f^\nu$.

It turns out to be convenient to consider in place of the frames $\mathcal{F}(z,\bar{z}, \nu)$ the gauged frames 
\begin{equation*}
\mathbb{F}(\lambda)=\mathcal{F}(\nu)\left(
                                   \begin{array}{ccc}
                                     -i\lambda & 0 & 0 \\
                                     0 &  \frac{1}{i\lambda} & 0 \\
                                     0 & 0 &  1 \\
                                   \end{array}
                                 \right),
\end{equation*}
where $i\lambda^3 \nu=1$.

For these frames we obtain the equations
\begin{equation}\label{eq:mathbbF}
\begin{split}
\mathbb{F}^{-1}\mathbb{F}_z&=
\frac{1}{\lambda}\left(
   \begin{array}{ccc}
     0 & 0 & i e^{\frac{u}{2}} \\
  -i \psi e^{-u}   & 0 & 0 \\
     0 &  i e^{\frac{u}{2}} & 0 \\
   \end{array}
 \right)+\left(
   \begin{array}{ccc}
   \frac{u_z}{2}  & &  \\
      & -\frac{u_z}{2} &  \\
      & & 0 \\
   \end{array}
 \right)\\
 &:=\lambda^{-1}U_{-1}+U_0,\\
\mathbb{F}^{-1}\mathbb{F}_{\bar{z}} &=\lambda \left(
   \begin{array}{ccc}
     0 &  -i\bar{\psi} e^{-u} & 0 \\
     0& 0   & i e^{\frac{u}{2}} \\
     i e^{\frac{u}{2}} &0 & 0 \\
   \end{array}
 \right)+\left(
   \begin{array}{ccc}
     -\frac{u_{\bar z}}{2} & &  \\
      & \frac{u_{\bar z}}{2}  & \\
     & & 0 \\
   \end{array}
 \right)\\
  &:=\lambda V_{1}+V_0.
\end{split}
\end{equation}

\begin{proposition}\label{prop:frame}
 Let $M$ be a Riemann surface and $U$ a simply-connected open subset of $M$.
Let $\mathbb{F}(z, \bar{z},\lambda): U\rightarrow SU(3)$,  $\lambda\in S^1, z \in U, $ be a solution to the system \eqref{eq:mathbbF}.
Then $[\mathbb{F}(z, \bar{z}, \lambda)e_3]$ gives a minimal Lagrangian surface defined on $U$ with values in $\mathbb{C}P^2$ and 
with the metric $g=2e^{u}dzd\bar{z}$ and the Hopf differential $\Psi^{\nu}=\nu \psi dz^3$.

Conversely, suppose $f^{\nu}:M\rightarrow \mathbb{C}P^2$ is a conformal parametrization of
a minimal Lagrangian surface in $\mathbb{C}P^2$ with the metric $g=2e^{u}dzd\bar{z}$ and Hopf differential
$\Psi^{\nu}=\nu\psi dz^3$. Then for any open, simply-connected subset $U$ of $M$ there exists a unique frame $\mathbb{F}: U \rightarrow SU(3)$ satisfying \eqref{eq:mathbbF} and  $[\mathbb{F}(z, \bar{z},\lambda)e_3] = f$.
\end{proposition}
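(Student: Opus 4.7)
The plan is to verify both directions by direct computation, exploiting the key observation that the coefficient matrices in \eqref{eq:mathbbF} are precisely the gauge transforms of $\mathcal{U},\mathcal{V}$ from \eqref{eq:UV1}, with $\psi$ replaced by $\nu\psi$, by conjugation with the constant diagonal matrix $D := \mathrm{diag}(-i\lambda,\, 1/(i\lambda),\, 1)$, valid whenever $i\lambda^3\nu = 1$.

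For the forward direction, given $\mathbb{F}: U\to SU(3)$ satisfying \eqref{eq:mathbbF}, I set $F := \mathbb{F} e_3 \in S^5(1)$ and read off the third columns in \eqref{eq:mathbbF} to obtain $F_z = i\lambda^{-1} e^{u/2}\, \mathbb{F} e_1$ and $F_{\bar z} = i\lambda e^{u/2}\, \mathbb{F} e_2$. Since the columns of $\mathbb{F}$ are Hermitian orthonormal and $|\lambda|=1$, the horizontality \eqref{horizontal} and conformality \eqref{conformal} with conformal factor $e^u$ are immediate, so $[F]:U\to\mathbb{C}P^2$ is a conformal Lagrangian immersion with metric $g = 2e^u\, dz\, d\bar z$. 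Differentiating $F_z$ once more, substituting the expansion of $\mathbb{F}_z e_1$ from \eqref{eq:mathbbF}, and pairing with $\overline{F_{\bar z}}$ yields $F_{zz}\cdot\overline{F_{\bar z}} = -i\lambda^{-3}\psi = \nu\psi$, identifying the Hopf differential. Minimality then follows because $\mathcal{F} := \mathbb{F} D^{-1}$ satisfies \eqref{eq:frame1} with $\mathcal{U}, \mathcal{V}$ given by \eqref{eq:UV1} after replacing $\psi$ by $\nu\psi$ (a direct conjugation check entry by entry), which is precisely the frame equation of a minimal Lagrangian immersion.

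For the converse, given a minimal Lagrangian $f^\nu$ on simply-connected $U\subset M$, I take the horizontal lift $F$ constructed at the start of Section \ref{sec:miL}, form the Hermitian orthonormal frame $\mathcal{F} := (e^{-u/2}F_z,\, e^{-u/2}F_{\bar z},\, F)\in SU(3)$ which satisfies \eqref{eq:frame1} with $\psi^\nu = \nu\psi$, and set $\mathbb{F} := \mathcal{F}\cdot D$; the equations \eqref{eq:mathbbF} follow from \eqref{eq:frame1} by the same conjugation. Uniqueness reduces to the fact that once a horizontal lift with $[\mathbb{F} e_3]=f$ is chosen, the first-order equation \eqref{eq:mathbbF} determines $\mathbb{F}$ completely; the residual ambiguity in the horizontal lift is constrained by the requirement $\mathbb{F}\in SU(3)$.

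The main obstacle I anticipate is the bookkeeping of powers of $i$ and $\lambda$ in the Hopf-differential calculation: three conventions interact (the gauge factor $D$, the cube-root relation $i\lambda^3\nu=1$, and $|\lambda|=1$), so arriving at the precise factor $\nu = -i\lambda^{-3}$ requires some care. The remaining steps are structural, using only the unitarity identity $\mathbb{F}^*\mathbb{F} = I$ and the fact that $D$ is a constant diagonal matrix that commutes with the diagonal parts $U_0, V_0$ of the coefficient matrices.
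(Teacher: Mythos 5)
Your proposal is correct and follows exactly the route the paper itself takes: the proposition is stated there without a separate proof precisely because it is the summary of the gauge computation $\mathbb{F}(\lambda)=\mathcal{F}(\nu)\,\mathrm{diag}(-i\lambda,(i\lambda)^{-1},1)$ with $i\lambda^{3}\nu=1$ that immediately precedes it, and your verification of the metric, horizontality, the identity $F_{zz}\cdot\overline{F_{\bar z}}=-i\lambda^{-3}\psi=\nu\psi$, and minimality (via $F_{z\bar z}=-e^{u}F$) unwinds that same computation. The only point worth tightening is the uniqueness claim: the constraint $\det\mathbb{F}=1$ reduces the constant-phase ambiguity of the horizontal lift to cube roots of unity rather than eliminating it, so the frame is unique only up to multiplication by $\omega I$ with $\omega^{3}=1$ -- a caveat already implicit in the paper's statement and not a defect of your argument.
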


\begin{remark}
\begin{enumerate}
\item In general, the notion of a \lq\lq frame\rq\rq only denotes maps 
$\F : U \rightarrow SU(3)$ such that  $[\mathbb{F}(z, \bar{z}, \lambda)e_3]$ is a minimal Lagrangian surface. Then two such frames $\F$ and $\hat{\F}$ are in the relation $\hat{\F} = W \F k$ with $W\in SU(3)$ and $k$ a map $k:U \rightarrow U(1)$.

\item  Note that in this paper $U(1)$ acts by diagonal matrices of the form  $\mathrm{diag}(a,a^{-1},1)$ on the right. In particular,  any gauge $k$ for $\F$ is of this form. 
\end{enumerate}

\end{remark}

%%%%%%%%%%%%%%%%%%%%%%%%
\subsection{The loop group method for minimal Lagrangian surfaces}
%%%%%%%%%%%%%%%%%%%%%%%%

 Let $\sigma$ denote the  automorphism of $SL(3,\mathbb{C})$ of order $6$ defined by
\begin{equation*}
\sigma: g\mapsto P (g^t)^{-1} P^{-1}, \quad
P=\left(
  \begin{array}{ccc}
    0 & \alpha & 0 \\
    \alpha^2 & 0 & 0 \\
    0 &0 & 1\\
  \end{array}
\right), \quad \alpha=e^{2\pi i/3},
\end{equation*}
Let $\tau$ denote the anti-holomorphic involution of $SL(3,\mathbb{C})$ which defines  the real form $SU(3)$, 
$$\tau(g):=(\bar{g}^t)^{-1}.$$
Then the corresponding automorphism $\sigma$ of order $6$ and 
the anti-holomorphic automorphism $\tau$
of $sl(3,\mathbb{C})$ are
\begin{equation*}
\sigma: \xi \mapsto -P\xi^t P^{-1}, \quad \tau: \xi \mapsto -\bar{\xi}^t.
\end{equation*}

By $\mathfrak{g}_l$ we denote the $\epsilon^l$-eigenspace of $\sigma$ in  $\mathfrak{g}^{\mathbb C}$, where $\epsilon=e^{\pi i/3}$.
Explicitly these eigenspaces are given as follows
\begin{equation*}
\begin{split}
\mathfrak{g}_0&=\left\{
                    \begin{pmatrix}
                    a &  &  \\
                     & -a &  \\
                     &  & 0 \\
                \end{pmatrix}
                \mid a\in \C
\right\},
\quad
\mathfrak{g}_1=\left\{\begin{pmatrix}
                    0 & b & 0 \\
                     0 & 0 & a \\
                     a & 0 & 0 \\
                  \end{pmatrix}\mid a,b\in\C
\right\},
\\
\mathfrak{g}_2&=\left\{
                  \begin{pmatrix}
                    0 & 0& a  \\
                     0& 0 & 0 \\
                     0& -a & 0 \\
                  \end{pmatrix} \mid a\in \C
              \right\},
\quad
\mathfrak{g}_3=\left\{
                  \begin{pmatrix}
                    a &  &  \\
                      & a &  \\
                      &  & -2a \\
                  \end{pmatrix} \mid a\in \C
              \right\},\\
\mathfrak{g}_4&=\left\{
                  \begin{pmatrix}
                    0 & 0 & 0  \\
                     0& 0 & a \\
                     -a & 0 & 0 \\
                  \end{pmatrix} \mid a\in \C
            \right\},
\quad
\mathfrak{g}_5=\left\{
                  \begin{pmatrix}
                    0 & 0 & a \\
                     b & 0 & 0 \\
                     0 & a & 0 \\
                  \end{pmatrix} \mid a, b\in \C 
                \right\}.
\end{split}
\end{equation*}
Remark that the automorphism $\sigma$ gives a $6$-symmetric space $SU(3)/U(1)$
and any minimal Lagrangian surface in $\mathbb{C}P^2$ frames a primitive map $\mathbb{F}|_{\lambda=1}: M\rightarrow SU(3)/U(1)$.

Using loop group terminology, we can state (refer to \cite{MM}):
\begin{proposition}
Let $f: \mathbb{D} \rightarrow \mathbb{C}P^2$ be a conformal parametrization 
of a contractible Riemann surface.  
Then the following statements are equivalent:
\begin{enumerate}
\item $f$ is minimal Lagrangian.
\item The moving frame $\mathbb{F}|_{\lambda=1}=(-ie^{-\frac{u}{2}} F_z, -ie^{-\frac{u}{2}}F_{\bar z}, F): \mathbb{D} \rightarrow SU(3)/U(1)$ is primitive.
\item $\mathbb{F}^{-1}d\mathbb{F} =(\lambda^{-1} U_{-1}+U_0)dz+(\lambda V_1+V_0)d\bar{z} 
\subset \Lambda su(3)_{\sigma}$ is a one-parameter family of flat connections.
\end{enumerate}
\end{proposition}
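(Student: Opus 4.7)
The plan is to establish the three equivalences via the cycle (1) $\Rightarrow$ (3) $\Rightarrow$ (2) $\Rightarrow$ (1). The key observation is that (1) $\Leftrightarrow$ (3) is essentially a repackaging of Proposition \ref{prop:frame} once the $\sigma$-twisting and flatness for all $\lambda$ are verified, while (2) $\Leftrightarrow$ (3) is the standard dictionary between primitive maps into a $6$-symmetric space and $S^1$-families of flat connections in the corresponding twisted loop algebra.

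For (1) $\Rightarrow$ (3), I would invoke Proposition \ref{prop:frame}: for each $\nu\in S^1$ (equivalently each $\lambda\in S^1$ with $i\lambda^3\nu=1$) there is a unique frame $\mathbb{F}(\lambda):U\to SU(3)$ satisfying \eqref{eq:mathbbF}. First, by direct inspection of \eqref{eq:mathbbF} against the explicit eigenspace descriptions listed in the excerpt, one verifies $U_{-1}\in\mathfrak{g}_5=\mathfrak{g}_{-1\bmod 6}$, $V_1\in\mathfrak{g}_1$, and $U_0,V_0\in\mathfrak{g}_0$; combined with the $\lambda^k$ prefactors this gives the $\sigma$-twisting, while reality in $su(3)$ follows from $\mathbb{F}(\lambda)\in SU(3)$ for $|\lambda|=1$. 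For flatness, expanding $d\alpha+\alpha\wedge\alpha=0$ in powers of $\lambda$ yields a finite system whose equations are exactly the Gauss-Codazzi equations \eqref{eq:mLsurfaces}-\eqref{eq:Codazzi} with $\psi$ replaced by $\psi^\nu=\nu\psi$; these hold by the $\nu$-invariance noted immediately after \eqref{eq:Codazzi}.

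For (3) $\Rightarrow$ (2), specialize to $\lambda=1$: the Maurer-Cartan form $(U_{-1}+U_0)dz+(V_1+V_0)d\bar z$ has $(1,0)$-part in $\mathfrak{g}_{-1}\oplus\mathfrak{g}_0$ and $(0,1)$-part in $\mathfrak{g}_1\oplus\mathfrak{g}_0$, which is by definition the primitivity condition for a map into the $6$-symmetric space $SU(3)/U(1)$. For (2) $\Rightarrow$ (1), primitivity forces $\mathbb{F}^{-1}d\mathbb{F}|_{\lambda=1}$ to have the block structure of \eqref{eq:UV1}; hence \eqref{eq:frame1} is satisfied, and the converse direction of Proposition \ref{prop:frame} produces a minimal Lagrangian immersion $f=[\mathbb{F}e_3]$.

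The main obstacle is really just book-keeping in two places: (i) verifying by hand that each entry of $U_{-1},U_0,V_1,V_0$ sits in the advertised $\sigma$-eigenspace, and (ii) showing that separating $d\alpha+\alpha\wedge\alpha=0$ by powers of $\lambda$ reproduces precisely the $\nu$-deformed Gauss-Codazzi system. Both are routine. No deeper obstruction appears, since the passage from a primitive map into a $k$-symmetric space ($k\geq 3$) to an $S^1$-family of flat connections in the twisted loop algebra is a classical feature of the loop-group approach.
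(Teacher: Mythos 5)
Your argument is correct and is exactly the standard loop-group dictionary that the paper itself relies on: it offers no proof of this proposition, merely citing \cite{MM}, and your cycle $(1)\Rightarrow(3)\Rightarrow(2)\Rightarrow(1)$ together with the eigenspace check ($U_{-1}\in\mathfrak{g}_5$, $V_1\in\mathfrak{g}_1$, $U_0,V_0\in\mathfrak{g}_0$) and the $\lambda$-expansion of the flatness condition recovering the $\nu$-deformed Gauss--Codazzi system is precisely how that reference (and the general $k$-symmetric space theory) proceeds. The only point worth tightening is $(2)\Rightarrow(1)$: primitivity alone gives membership of the $(1,0)$-part in $\mathfrak{g}_{-1}\oplus\mathfrak{g}_0$, and you should note that the unitarity of the stated frame already encodes horizontality and conformality of the lift, so that the vanishing of the $\mathfrak{g}_2\oplus\mathfrak{g}_3\oplus\mathfrak{g}_4$ components is what delivers minimality via \eqref{eq:UV1}.
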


 The general Iwasawa decomposition theorem (\cite{PS}) takes in our case, i.e. for the groups $\Lambda SL(3,\mathbb{C})_{\sigma}$  and $\Lambda SU(3)_{\sigma}$, the following explicit form:

\begin{theorem}[Iwasawa Decomposition theorem of $\Lambda SL(3,\mathbb{C})_{\sigma}$]\label{Thm:Iwasawa}
Multiplication $\Lambda SU(3)_{\sigma}\times \Lambda^{+} SL(3, \C)_{\sigma} \rightarrow \Lambda SL(3, \C)_{\sigma}$ is a diffeomorphism onto. Explicitly, 
every element $g\in \Lambda SL(3,\mathbb{C})_{\sigma}$ can be represented in the form
$g=h V_{+}$ with $h\in \Lambda SU(3)_{\sigma}$ and $V_{+}\in \Lambda^{+} SL(3,\mathbb{C})_{\sigma}$.
One can assume without loss of generality that $V_{+} (\lambda=0)$ has only positive diagonal entries.
In this case the decomposition is unique. 
\end{theorem}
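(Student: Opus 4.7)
The plan is to reduce the statement to the general (untwisted) Iwasawa decomposition of $\Lambda SL(3,\mathbb{C})$ proved in \cite{PS}, and then show that the order-six automorphism $\sigma$ descends compatibly to both Iwasawa factors. I write $g^\sigma$ for the $\sigma$-twist of a loop $g(\lambda)$, so that the twisted subgroups are the fixed loci of $g \mapsto g^\sigma$. Given $g \in \Lambda SL(3,\mathbb{C})_\sigma$, the first step is to apply the untwisted theorem to obtain $g = h_0 V_{0,+}$ with $h_0 \in \Lambda SU(3)$, $V_{0,+} \in \Lambda^+ SL(3,\mathbb{C})$, and $V_{0,+}(0)$ upper triangular with positive real diagonal entries.

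Next I would evaluate at $\lambda = 0$. The twisted condition on $g$ forces $g(0) \in SL(3,\mathbb{C})^\sigma$, which from the explicit form of $\mathfrak{g}_0$ is the torus $\{\mathrm{diag}(a, a^{-1}, 1)\}$. Since $V_{0,+}(0)$ is upper triangular and $g(0)$ is diagonal, $h_0(0) = g(0) V_{0,+}(0)^{-1}$ is both unitary and upper triangular, so it must be diagonal unitary; it follows that $V_{0,+}(0) = h_0(0)^{-1} g(0)$ is diagonal as well. The positivity normalization then pins $V_{0,+}(0) = \mathrm{diag}(a, a^{-1}, 1)$ with $a > 0$, and a direct calculation using the explicit formula $\sigma(\xi) = P(\xi^t)^{-1}P^{-1}$ shows that this element is $\sigma$-fixed. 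To propagate this invariance from $\lambda = 0$ to the whole loop, I would apply the twist to $g = h_0 V_{0,+}$ and use $g = g^\sigma$ to obtain $h_0^{-1} h_0^\sigma = V_{0,+}(V_{0,+}^\sigma)^{-1}$; the left side lies in $\Lambda SU(3)$ and the right side in $\Lambda^+ SL(3,\mathbb{C})$, so their common value lies in the intersection, which consists of constant loops with values in the diagonal unitary subgroup. Evaluating at $\lambda = 0$ yields the identity by the previous step, so the common value is trivial, whence $h_0 = h_0^\sigma$ and $V_{0,+} = V_{0,+}^\sigma$, giving existence of the twisted decomposition.

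Uniqueness within the twisted class is inherited from the untwisted uniqueness, since any twisted decomposition is in particular an untwisted one satisfying the positive-diagonal normalization. The diffeomorphism statement transfers from the untwisted theorem by restricting the multiplication map to the closed fixed submanifolds of the smooth involution $g \mapsto g^\sigma$, which are mapped onto one another. The step I expect to require the most care is the interaction between the positive-diagonal normalization and $\sigma$: since $\sigma$ conjugates by the permutation-type matrix $P$, it does not preserve upper-triangularity in general, so the twisted condition at $\lambda = 0$ is genuinely needed to first force $V_{0,+}(0)$ into the diagonal subgroup before the normalization can be consistently propagated by the untwisted uniqueness.
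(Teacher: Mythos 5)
The paper does not actually prove this theorem: it records the explicit twisted form and refers to the general Iwasawa decomposition of \cite{PS}, so your attempt to derive the twisted statement from the untwisted one is supplying an argument the authors leave to the literature. Your overall strategy (untwisted splitting plus compatibility of $\sigma$ with the two factors) is the natural one, but as written it contains a genuine gap. The fatal step is ``evaluate at $\lambda=0$'': an element $g\in\Lambda SL(3,\mathbb{C})_{\sigma}$ and the unitary factor $h_0\in\Lambda SU(3)$ are loops defined on $S^1$ only and do not extend holomorphically to the disk --- indeed the only loops in $\Lambda SU(3)$ admitting such an extension are the constants --- so $g(0)$ and $h_0(0)$ are undefined. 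Consequently the assertions that $g(0)$ lies in the fixed group of $\sigma$ and that $h_0(0)=g(0)V_{0,+}(0)^{-1}$ is ``unitary and upper triangular'' are meaningless, and with them collapses your proof that $V_{0,+}(0)$ is a $\sigma$-fixed diagonal matrix, which is precisely what you use to kill the ambiguity constant.

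What does survive is this: from $g=g^{\sigma}$ one gets $(h_0^{\sigma})^{-1}h_0=V_{0,+}^{\sigma}V_{0,+}^{-1}=:k$, and $k$ lies in $\Lambda SU(3)\cap\Lambda^{+}SL(3,\mathbb{C})$, which is the group of \emph{all} constant loops in $SU(3)$, not only the diagonal ones as you state (the diagonal $U(1)$ is the intersection of the \emph{twisted} subgroups). Evaluating the holomorphic side at $\lambda=0$ gives $k=\sigma(v)v^{-1}$ with $v=V_{0,+}(0)$ upper triangular with positive diagonal; since $\sigma$ conjugates by the permutation-type matrix $P$, this is not visibly the identity. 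To finish one must show that this constant cocycle (note that $g\mapsto g^{\sigma}$ has order six, so it is not an involution as you call it) can be written as $k=\sigma(u)u^{-1}$ for some constant $u\in SU(3)$, and then replace $(h_0,V_{0,+})$ by $(h_0u,\,u^{-1}V_{0,+})$ to obtain twisted factors; this nonabelian-cohomology step is the real content of passing from the untwisted to the twisted decomposition and is absent from your proposal. Either supply that step or, as the authors do, cite a reference where the twisted Iwasawa decomposition is established.
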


%%%%%%%%%%%
\section{Equivariant minimal Lagrangian surfaces}
\label{Sec:Equiv}
%%%%%%%%%%%

In this section we will investigate minimal Lagrangian immersions for which there exists a one-parameter family  $(\gamma_t, R_t) \in (Aut(M), Iso(\C P^2))$ of symmetries.

\begin{definition}
Let $M$ be any connected Riemann surface and  $f:M \rightarrow \mathbb{C}P^2$ an immersion. Then $f$  is called  equivariant, relative to the one-parameter group $(\gamma_t, R(t)) \in (Aut(M), Iso(\C P^2))$, if 
$$f(\gamma_t \cdot p)=R(t) f(p)$$ for all $p\in M$ and all $t\in \mathbb{R}$.
\end{definition}

By the definition above, any Riemann surface $M$ admitting an equivariant minimal Lagrangian immersion admits a one-parameter group of (biholomorphic) automorphisms.
Fortunately, the classification of such surfaces is very simple:
\begin{theorem} [Classification of Riemann surfaces admitting one-parameter groups of automorphisms, e.g. \cite{Farkas-Kra}]
\mbox{}\par
\begin{enumerate}
\item{$S^2$,}
\item{$\C$, $\mathbf{D}$,}
\item{$\C^*$,}
\item{$\mathbf{D}^*, \mathbf{D}_r$,}
\item{$T=\C/\Lambda_{\tau}$,}
\end{enumerate}
where the superscript \lq\lq \, $^*$\rq\rq denotes deletion of the point $0$, the subscript \lq\lq\, $ r$\rq\rq denotes the open annulus between $0 < r < 1/r$ and 
$\Lambda_{\tau}$ is the free group generated by the two translations $z\mapsto z+1$, $z\mapsto z+\tau$, $\mathrm{Im}\tau >0$.
\end{theorem}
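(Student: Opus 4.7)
The plan is to invoke the Uniformization Theorem and carry out a case analysis based on the universal cover $\tilde{M}$, which is biholomorphic to one of $S^2$, $\C$, or $\mathbf{D}$ (equivalently $\Up$). Writing $M = \tilde{M}/\Gamma$ with $\Gamma \subset Aut(\tilde{M})$ a discrete subgroup acting freely and properly discontinuously, the first step is to lift the one-parameter group $\gamma_t$ of $Aut(M)$ to a continuous one-parameter family $\tilde{\gamma}_t$ in $Aut(\tilde{M})$ with $\tilde{\gamma}_0 = \mathrm{id}$. Since each $\gamma_t$ descends to $M$, $\tilde{\gamma}_t$ normalizes $\Gamma$. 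A standard continuity argument then applies: for each fixed $\gamma \in \Gamma$, the path $t \mapsto \tilde{\gamma}_t \gamma \tilde{\gamma}_t^{-1}$ is continuous into the discrete set $\Gamma$ and equals $\gamma$ at $t=0$, hence is constant. Therefore $\tilde{\gamma}_t$ centralizes $\Gamma$, and the problem reduces to classifying discrete, freely acting subgroups $\Gamma$ of $Aut(\tilde{M})$ centralized by some non-trivial one-parameter subgroup.

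Next I would dispose of the three cases. If $\tilde{M} = S^2$, then $\Gamma = \{\mathrm{id}\}$ by simple connectedness, giving case (1). If $\tilde{M} = \C$, then $Aut(\C)$ is the affine group $z \mapsto az+b$, and freely acting elements must be pure translations. So $\Gamma$ is a discrete translation subgroup of rank $0$, $1$, or $2$, producing $\C$, $\C/\Z \simeq \C^*$ (via $z \mapsto e^{2\pi i z}$), and the torus $T = \C/\Lambda_\tau$ respectively; in each situation a one-parameter subgroup of translations of $\C$ descends. If $\tilde{M} = \mathbf{D}$, I would invoke the standard trichotomy of non-trivial one-parameter subgroups of $PSU(1,1)$ into elliptic, parabolic and hyperbolic types. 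The centralizer of each such subgroup in $Aut(\mathbf{D})$ is abelian and one-dimensional, determined by its fixed-point set in $\overline{\mathbf{D}}$; hence any discrete subgroup of the centralizer is trivial or infinite cyclic. The elliptic case forces $\Gamma = \{\mathrm{id}\}$ since a non-trivial elliptic element has an interior fixed point in $\mathbf{D}$; normalizing the parabolic case in $\Up$ to $z \mapsto z + t$ yields either $\Up = \mathbf{D}$ or $\Up/\Z \simeq \mathbf{D}^*$ via $z \mapsto e^{2\pi i z}$; and normalizing the hyperbolic case on $\Up$ to $z \mapsto e^{t} z$ yields either $\mathbf{D}$ or an annulus biholomorphic to $\mathbf{D}_r$ for suitable $r$.

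The main obstacle, as usual, lies in the $\mathbf{D}$ case: one has to identify the centralizers of the three types of one-parameter subgroups of $PSU(1,1)$ correctly, and then verify that the hyperbolic quotient $\Up/\langle z \mapsto \mu z\rangle$ is biholomorphic to one of the annuli $\mathbf{D}_r$ in the statement (a computation of the conformal modulus via the logarithm $z \mapsto \log z$). The lifting/centralization step, by contrast, is essentially formal, relying only on path-lifting for coverings together with the discreteness of $\Gamma$.
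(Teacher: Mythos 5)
The paper offers no proof of this statement---it is quoted as a known classification with a citation to Farkas--Kra---so there is nothing internal to compare against; your argument is the standard uniformization proof of exactly this fact and is correct as outlined. The only step worth making explicit is that the lifted family $\tilde{\gamma}_t$ is a genuine one-parameter subgroup of $Aut(\tilde{M})$ and not merely a family of lifts: the discrepancy $\tilde{\gamma}_{t+s}^{-1}\tilde{\gamma}_t\tilde{\gamma}_s$ lies in the discrete group $\Gamma$, depends continuously on $(t,s)$, and equals the identity at $(0,0)$, hence vanishes identically; after that your centralizer trichotomy in $Aut(\mathbf{D})$ and the rank count for translation lattices in $Aut(\C)$ go through as written.
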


Looking at this classification, one sees that after some composition with some holomorphic transformation one obtains the following picture, including 
the groups of translations:

\begin{theorem}(Classification of Riemann surfaces admitting one-parameter groups of automorphisms and
representatives for the one-parameter  groups, e.g. \cite{Farkas-Kra})
\label{class-equi}

\begin{enumerate}
\item $S^2$, group of rotations about the $z$-axis,
\item
\begin{enumerate}
\item $\C$, group of all real translations,
\item $\C$, group of all rotations about the origin $0$,
\item $\mathbf{D}$, group of all rotations about the origin $0$,
\item $\mathbf{D} \cong \mathbb{H}$, group of all real translations,
\item $ \mathbf{D} \cong \mathbb{H} \cong \log  \mathbb{H} =\St$, the strip between $y=0$ and $y = \pi$, 
group of all real translations,
\end{enumerate}
\item $\C^*$, group of all rotations about $0$,
\item $\mathbf{D}^*, \mathbf{D}_r,$ group of all rotations about $0$,
\item $T$, group of all real translations.
\end{enumerate}

\end{theorem}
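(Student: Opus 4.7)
The strategy is to take each Riemann surface $M$ from the preceding classification, write the automorphism group $\mathrm{Aut}(M)$ explicitly, classify one-parameter subgroups of its identity component up to conjugation in $\mathrm{Aut}(M)$, and for each class exhibit a biholomorphism of $M$ that puts the subgroup into the canonical form given in the statement. All of this is classical and relies on the normal-form theorems for $PSL(2,\mathbb{C})$, $PSU(1,1)$ and the affine group $\{z\mapsto az+b\}$.

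The cases $S^2$, $\mathbb{C}$, $\mathbb{C}^*$ and $T$ are handled directly. For $S^2$ one has $\mathrm{Aut}(S^2)=PSL(2,\mathbb{C})$, and the only one-parameter subgroups compatible with the round structure are the elliptic ones fixing a pair of antipodal points; conjugating those to the poles yields the rotations about the $z$-axis. For $\mathbb{C}$ the affine group has two essentially different types of one-parameter subgroups: those with trivial linear part, conjugate after a rotation of $\mathbb{C}$ to real translations, and those with non-trivial linear part, which have a unique fixed point and, after translating it to $0$, take the form $z\mapsto e^{t\mu}z$; the purely imaginary direction yields the rotations about $0$ of (2)(b), while the real and oblique directions are absorbed into the strip and cylinder models below. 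For $\mathbb{C}^*$ the identity component of $\mathrm{Aut}(\mathbb{C}^*)$ is $\mathbb{C}^*$ acting multiplicatively, whose compact one-parameter subgroup gives the rotations; non-compact ones correspond, via $z=e^w$, to translations of a cylinder $\mathbb{C}/(2\pi i\mathbb{Z})$. For the torus $T=\mathbb{C}/\Lambda_\tau$, the identity component of $\mathrm{Aut}(T)$ is the group of all translations, every one-parameter subgroup is a real line in $\mathbb{C}$, and after multiplying by a unit complex number this line becomes the real axis.

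The most substantial case is the disk $\mathbf{D}$, where $\mathrm{Aut}(\mathbf{D})\cong PSU(1,1)$. One-parameter subgroups fall into three conjugacy classes distinguished by the trace of a generator: \emph{elliptic} (one fixed point in $\mathbf{D}$), \emph{parabolic} (one fixed point on $\partial\mathbf{D}$), and \emph{hyperbolic} (two fixed points on $\partial\mathbf{D}$). In the elliptic case, conjugating the fixed point to $0$ gives the rotations of (2)(c). In the parabolic case, a Cayley transform sending the boundary fixed point to $\infty$ identifies $\mathbf{D}$ with $\mathbb{H}$ and converts the subgroup into real translations, giving (2)(d). In the hyperbolic case, sending the two boundary fixed points to $0$ and $\infty$ in $\mathbb{H}$ makes the subgroup act as $z\mapsto e^t z$; the further biholomorphism $w=\log z$ maps $\mathbb{H}$ onto the strip $\mathbb{S}=\{0<\mathrm{Im}\,w<\pi\}$ and carries this action to the real translations of $\mathbb{S}$, producing (2)(e). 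For $\mathbf{D}^*$ and $\mathbf{D}_r$ the full automorphism group is already reduced to rotations about $0$, so nothing further is required.

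The main obstacle is not a single hard computation but the book-keeping in two places: first, making sure that every one-parameter subgroup has been accounted for exactly once up to conjugation in $\mathrm{Aut}(M)$, in particular ruling out or reallocating the \lq\lq spiral\rq\rq\ subgroups on $\mathbb{C}$ and $\mathbb{C}^*$ (which either fail to act as isometries of any invariant metric, or after passage to a cover become translations of the cylinder or strip); and second, checking that each identification $\mathbf{D}\cong\mathbb{H}\cong\mathbb{S}$ is an honest biholomorphism of Riemann surfaces, so that the listed representatives are canonical. Once these are in place, the theorem follows from the Jordan normal-form discussion for $PSL(2,\mathbb{C})$ and $PSU(1,1)$ and the base classification cited from \cite{Farkas-Kra}.
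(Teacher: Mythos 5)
The paper offers no proof of this statement: it is quoted as a classical fact with a pointer to \cite{Farkas-Kra}, preceded only by the remark that the normal forms are obtained ``after some composition with some holomorphic transformation.'' Your sketch therefore cannot be compared with an argument in the paper; what it does is supply the standard proof, namely: list $\mathrm{Aut}(M)$ for each surface in the preceding classification, put the one-parameter subgroups of the identity component into normal form (elliptic/parabolic/hyperbolic for $PSU(1,1)$, translations versus affine spirals for $\mathrm{Aut}(\C)$, multiplication for $\C^*$, translations for $T$), and conjugate each normal form to one of the listed representatives via an explicit biholomorphism ($\mathbf{D}\cong\mathbb{H}$ by a Cayley transform, $\mathbb{H}\cong\St$ by $\log$, $\C^*\cong\C/2\pi i\Z$ by $\log$). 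That is the right route and matches what the cited reference provides.

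One point deserves to be made explicit rather than left as ``book-keeping.'' As a statement about $\mathrm{Aut}(M)$ alone, the exclusions you need are false: $z\mapsto e^{t}z$ is a perfectly good one-parameter group of automorphisms of $S^2$ and of $\C$, and it is not conjugate to a rotation group or a translation group in either case. The only way to discard the loxodromic/hyperbolic subgroups on $S^2$ and the spiral and dilation subgroups on $\C$ is to use the context of the paper's Definition of equivariance: since $R(t)\in \mathrm{Iso}(\C P^2)$, the group $\gamma_t$ preserves the induced metric $f^{*}g_{FS}$, so it lies in the isometry group of a Riemannian metric on $M$ --- compact for $S^2$ (forcing ellipticity), and on $\C$ a smooth invariant conformal factor for $z\mapsto e^{t}z$ would have to satisfy $u(e^{t}z)=u(z)-t$ and hence blow up at $0$. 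You gesture at this (``compatible with the round structure,'' ``fail to act as isometries of any invariant metric''), but it is the one non-formal step of the whole proof and should be stated as such. Note also that on $\C^*$ the dilation subgroup cannot be excluded on metric grounds (it preserves $|dz|^2/|z|^2$); it is genuinely present and is reallocated, as you say, to translations of the cylinder $\C/2\pi i\Z$, which is consistent with the paper's subsequent reduction of all translational cases to strips in $\C$. With that step made precise, your argument is complete and correct.
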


For later purposes we state the following 

\begin{definition}
Let $f: M \rightarrow \C P^2$ be an equivariant minimal Lagrangian immersion, then $f$ is called \lq\lq translationally equivariant\rq\rq, if the group of automorphisms acts by (all real) translations. It is called \lq\lq rotationally equivariant\rq\rq, if the group acts by all rotations about $0$.
\end{definition}

\begin{remark} 
Since we know that  any minimal Lagrangian immersion $f$ from a sphere is totally geodesic and it is the standard immersion of $S^2$ into $\mathbb{C}P^2$ (\cite{Yau1974}), we will exclude the case $S^2$ from the discussions in this paper. 
\end{remark}

%%%%%%%%%%%%%%%%%%%
\section{Translationally equivariant minimal Lagrangian immersions}
\label{Sec:trans equiv}
%%%%%%%%%%%%%%%%%%%

By what was said just above, we will assume throughout this section that the surface $M$ is a strip $\St$  in $\mathbb{C}$ parallel to the $x$-axis.
Actually, by applying a translation in $y-$direction we can assume that the real axis is 
contained in $\St$ and in particular $0 \in \St$.
 
We thus consider  minimal Lagrangian immersions
$f:\St \rightarrow \mathbb{C} P^2$  for which there exists a one-parameter subgroup $R(t)$ of $SU(3)$ such that 
$$f(t + z, t+\bar{z})=R(t) f(z,\bar{z})$$
for all $z\in \St$. 

Let $\mathbb{F}:\St \rightarrow SU(3)$ be a frame of $f$ satisfying $\mathbb{F}(0)=I$.
Since $f$ is  translationally equivariant we obtain that the frame $\mathbb{F}$ of $f$ is  translationally equivariant in the sense that
\begin{equation}\label{eq:equivframe1}
\mathbb{F}(t + z)=R(t) \mathbb{F}(z, \bar{z}) \mathcal{K}(t,z)
\end{equation}
holds, where $\mathcal{K}(t,z)$
is a crossed homomorphism with values in $U(1)$. 
This means  that 
 $\mathcal{K}$ 
 can be chosen such that $\F(0) = I$ and 
 satisfies the following cocycle condition: 
\begin{equation}\label{eq:cocycle}
\mathcal{K}(t+s,z)=\mathcal{K}(s,z)\mathcal{K}(t,s+z).
\end{equation}
In fact,
\begin{theorem}
$\mathcal{K}(t,z)$ is a coboundary.  More precisely, for the matrix function
$h(z)=\mathcal{K}(x,iy)^{-1}$ we have 
\begin{equation}\label{eq:coboundary}
\mathcal{K}(t,z) = h(z)h(t+z)^{-1}.
\end{equation}
Replacing $h$ by $ \hat{h} = h(0)^{-1} h$ if necessary we can even assume without loss of generality that the coboundary equation above holds with some matrix function $h$ also satisfying $h(0) = I$.
\end{theorem}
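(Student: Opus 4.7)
The plan is to exploit the cocycle condition \eqref{eq:cocycle} directly, making a single well-chosen substitution that decouples the real and imaginary parts of $z$. Writing $z=x+iy$ with $z \in \St$, I would apply \eqref{eq:cocycle} with the substitution $s \mapsto x$, $z \mapsto iy$, while keeping $t$ as is. Since $iy$ lies in $\St$ (the strip is horizontal and contains the real axis) and $x$ is real, both sides are defined, and the identity reads
\begin{equation*}
\mathcal{K}(t+x,iy) \;=\; \mathcal{K}(x,iy)\,\mathcal{K}(t,\,x+iy) \;=\; \mathcal{K}(x,iy)\,\mathcal{K}(t,z).
\end{equation*}

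Solving for $\mathcal{K}(t,z)$ gives $\mathcal{K}(t,z) = \mathcal{K}(x,iy)^{-1}\mathcal{K}(t+x,iy)$. Now defining $h(z) := \mathcal{K}(x,iy)^{-1}$, one observes that the translate $t+z = (t+x)+iy$ has imaginary part $y$ and real part $t+x$, so by the very definition of $h$,
\begin{equation*}
h(t+z) \;=\; \mathcal{K}(t+x,iy)^{-1}, \qquad \text{hence} \qquad h(t+z)^{-1} \;=\; \mathcal{K}(t+x,iy).
\end{equation*}
Substituting back yields the desired coboundary identity $\mathcal{K}(t,z) = h(z)\,h(t+z)^{-1}$.

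For the normalization, I would simply replace $h$ by $\hat{h}(z) := h(0)^{-1}h(z)$. Since the right-hand side of \eqref{eq:coboundary} is invariant under multiplying $h$ on the left by a constant (the constant cancels against its inverse), this leaves the formula unchanged, while $\hat{h}(0) = I$ by construction. Smoothness of $h$ in $z$ is inherited from that of $\mathcal{K}$, and the values lie in $U(1)$ (realized as the diagonal subgroup $\mathrm{diag}(a,a^{-1},1)$) because $\mathcal{K}$ does.

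There is essentially no main obstacle: the content of the theorem is the observation that the one-variable family $t \mapsto \mathcal{K}(t,z)$ can be reconstructed from the single function $x \mapsto \mathcal{K}(x,iy)$ (i.e., from the restriction of the cocycle to the real axis starting at the vertical translate $iy$ of $0$). The one small point that should be checked is that all points $(x,iy)$, $(t+x,iy)$ stay inside $\St$ for $z \in \St$ and $t \in \R$, which is immediate from the strip being invariant under real translations.
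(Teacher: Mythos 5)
Your proposal is correct and follows essentially the same route as the paper: both set $z=iy$, $s=x$ in the cocycle condition, define $h(z)=\mathcal{K}(x,iy)^{-1}$, and read off $\mathcal{K}(t,z)=h(z)h(t+z)^{-1}$, with the same trivial normalization $\hat h = h(0)^{-1}h$. Your added remarks about the strip being invariant under real translations and about values lying in $U(1)$ are fine but not needed beyond what the paper records.
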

\begin{proof}
Setting $z=iy$ in \eqref{eq:cocycle},  we get
\begin{equation}\label{eq:***}
\mathcal{K}(t+s, iy)=\mathcal{K}(s,iy)\mathcal{K}(t,s+iy).
\end{equation}
 Take $h(z)=\mathcal{K}(x,iy)^{-1}$, where $z=x+iy$. Then putting $s=x$ in \eqref{eq:***}, we obtain
\begin{equation*}
h(z)h(t+z)^{-1}=\mathcal{K}(x,iy)^{-1}\mathcal{K}(t+x,iy)=\mathcal{K}(t,z),
\end{equation*}
which completes the proof of \eqref{eq:coboundary}.
The last statement is trivial.
\end{proof}

This implies the important
\begin{theorem}
For any translationally equivariant minimal Lagrangian immersion, the frame $\F$ can be chosen such that
$\F (0) = I$ and 
\begin{equation*}\label{eq:equiv-F_lambda-0}
\mathbb{F}(t+z)=\chi(t)\mathbb{F}(z),
\end{equation*}
holds, where $\chi(t)$ is a one-parameter group in $SU(3)$.
\end{theorem}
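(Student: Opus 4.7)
The plan is to absorb the cocycle $\mathcal{K}(t,z)$ into a gauge transformation of $\F$ using the coboundary factorization from the previous theorem, and then read off the desired equivariance.

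First, I would recall from the preceding theorem that there exists $h: \St \to U(1)$ with $h(0)=I$ and
\[
\mathcal{K}(t,z) = h(z)\,h(t+z)^{-1}.
\]
Since $\mathcal{K}$ takes values in the diagonal $U(1)$-subgroup $\operatorname{diag}(a,a^{-1},1)$, so does $h$. By Remark 1 this means that $h$ is a legitimate gauge: if $\F$ is a frame of $f$, then the gauged map
\[
\widetilde{\F}(z) := \F(z)\, h(z)
\]
is again a frame of the same minimal Lagrangian immersion $f$, and in particular $\widetilde{\F}(0) = \F(0)h(0) = I \cdot I = I$.

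Next I would verify the equivariance for $\widetilde{\F}$. Substituting the cocycle relation \eqref{eq:equivframe1} and the coboundary identity gives
\[
\widetilde{\F}(t+z) = \F(t+z)\,h(t+z) = R(t)\,\F(z)\,\mathcal{K}(t,z)\,h(t+z) = R(t)\,\F(z)\,h(z) = R(t)\,\widetilde{\F}(z).
\]
Setting $z = 0$ and using $\widetilde{\F}(0) = I$ yields $\widetilde{\F}(t) = R(t)$. Defining $\chi(t) := R(t)$, the one-parameter subgroup of $SU(3)$ we started with, the identity $\widetilde{\F}(t+z) = \chi(t)\widetilde{\F}(z)$ is exactly the desired equivariance; the one-parameter group property $\chi(t+s) = \chi(t)\chi(s)$ is inherited from $R$.

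There is no real obstacle here: the content of the statement has essentially been established by the preceding coboundary theorem, and what remains is to recognize that multiplication by a $U(1)$-valued function on the right is a legitimate gauge of minimal Lagrangian frames (Remark 1), so that $\widetilde{\F}$ is still a frame of $f$. The only point to be careful about is the normalization $\widetilde{\F}(0) = I$, which is guaranteed by the final sentence of the preceding theorem allowing us to take $h(0) = I$.
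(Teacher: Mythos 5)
Your proof is correct and follows essentially the same route as the paper: gauge the frame by the coboundary function $h$ from the preceding theorem and observe that the cocycle $\mathcal{K}(t,z)h(t+z)=h(z)$ collapses, leaving $\hat{\F}(t+z)=R(t)\hat{\F}(z)$ with $\chi(t)=R(t)$. The additional remarks about $h$ being a legitimate $U(1)$-gauge and the normalization $\hat{\F}(0)=I$ are consistent with the paper's setup and add nothing that conflicts with its argument.
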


\begin{proof}
Choosing $h \in U(1)$ as in the theorem above, satisfying $h(0) = I$ and replacing $\mathbb{F}$ by $\hat{\mathbb{F}}(z):= \mathbb{F}(z)h(z)$,
we obtain  from \eqref{eq:equivframe1} and \eqref{eq:coboundary}
\begin{eqnarray*}
\hat{\mathbb{F}}(t + z)&=&\mathbb{F}(t + z) h(t +z)=R(t) \mathbb{F}(z)\mathcal{K}(t,z)h(t +z)\nonumber\\
&=&R(t) \mathbb{F}(z)h(z) 
=R(t) \hat{\mathbb{F}}(z).
\end{eqnarray*}
Thus $\hat{\F}$ satisfies the claim.
\end{proof}

Let's now consider the frame $\mathbb{F}$ obtained in the theorem above. It satisfies the Maurer-Cartan
equation $\alpha = \mathbb{F}^{-1}d\mathbb{F}=U_{-1}dz+U_{0}dz+V_{0}d\bar{z}+V_{1}d\bar{z}$.
Now we introduce $\lambda\in S^1$ 
as usual. We set 
\begin{equation}\label{eq:F_lambda}
\alpha_\lambda=\lambda^{-1}U_{-1}dz+U_{0}dz+V_{0}d\bar{z}+\lambda V_{1}d\bar{z}.
\end{equation}
Then $\alpha(z,\lambda)$ is integrable, since the Maurer-Cartan form of the original frame (used in the theorem above) is integrable.

Let $\F(z,\lambda)$ denote the solution to

\begin{equation*}
\mathbb{F}(z, {\lambda})^{-1}d\mathbb{F}(z, {\lambda}) = \alpha_\lambda,
\end{equation*}
also satisfying $\F(0,\lambda) = I$.
Then $\F$ satisfies
\begin{equation}\label{eq:equiv-F_lambda}
\mathbb{F}(t+z, \lambda)=\chi(t,\lambda)\mathbb{F}(z,\lambda),
\end{equation}
for any $z\in M$ and a one-parameter group
$\chi(t,\lambda)=e^{tD(\lambda)}$ for some $D(\lambda)\in \Lambda su(3)_{\sigma}$. 
This is the equivariance condition on the extended frame $\mathbb{F}(z,\lambda)$ assumed in  \cite{BuKi}.

But in the context of equivariant minimal Lagrangian immersions it is obvious that the coefficient  matrices of \eqref{eq:mathbbF}
are independent of $x$. 
Therefore, the solution $\F_0$  to the differential equations \eqref{eq:mathbbF} with initial condition $\F_0(0,\lambda) = I$ also satisfies \eqref{eq:equiv-F_lambda}. It is easy to see that two frames satisfying 
\eqref{eq:equiv-F_lambda} only differ by some gauge in $U(1)$ which is independent of $x$. Thus we obtain

\begin{theorem} \label{essential-transfo}
For the extended frame $\F$ of any translationally equivariant minimal Lagrangian immersion we can assume without loss of generality 
$\F(0,\lambda)=I$ and
\begin{equation*}
\mathbb{F}(t+z, \lambda)=\chi(t,\lambda)\mathbb{F}(z,\lambda),
\end{equation*}
with $\chi(t,\lambda)=e^{tD}$ for some $D\in \Lambda su(3)_{\sigma}$. 
Moreover, we can also assume that $\F$  satisfies 
\eqref{eq:mathbbF}. 

Any two frames satisfying \eqref{eq:equiv-F_lambda} only differ by some gauge in $U(1)$  which only depends on y.
\end{theorem}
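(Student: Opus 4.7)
The plan is to assemble the three assertions of the theorem—the normalization $\mathbb{F}(0,\lambda)=I$ together with equivariance \eqref{eq:equiv-F_lambda}, compatibility with \eqref{eq:mathbbF}, and uniqueness modulo a $y$-dependent $U(1)$ gauge—from the construction already set up in the paragraphs immediately preceding the theorem, adding only the uniqueness verification.

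For existence, I would start with the frame $\mathbb{F}$ of the previous theorem at $\lambda=1$, normalized so that $\mathbb{F}(0)=I$ and $\mathbb{F}(t+z)=\chi(t)\mathbb{F}(z)$ for a one-parameter subgroup $\chi$ of $SU(3)$. Its Maurer--Cartan form $\alpha=\mathbb{F}^{-1}d\mathbb{F}$ is then automatically $x$-translation invariant, since the left multiplication by $\chi(t)$ cancels out. Consequently the coefficients $u,\psi$ appearing in \eqref{eq:mathbbF} depend only on $y$. Introducing the spectral parameter as in \eqref{eq:F_lambda} yields $\alpha_\lambda$; its integrability follows from the invariance of \eqref{eq:mLsurfaces}--\eqref{eq:Codazzi} under $\psi\mapsto\nu\psi$. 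Define $\mathbb{F}(z,\lambda)$ as the unique solution of $\mathbb{F}^{-1}d\mathbb{F}=\alpha_\lambda$ with $\mathbb{F}(0,\lambda)=I$; by construction this satisfies \eqref{eq:mathbbF}. To obtain the equivariance, note that both $z\mapsto \mathbb{F}(t+z,\lambda)$ and $z\mapsto \mathbb{F}(t,\lambda)\mathbb{F}(z,\lambda)$ solve the same Maurer--Cartan system (using $x$-invariance of $\alpha_\lambda$) and agree at $z=0$; uniqueness of ODE solutions forces them to coincide. Setting $\chi(t,\lambda):=\mathbb{F}(t,\lambda)$, the one-parameter property follows from $\chi(t,\lambda)$ integrating the $t$-constant generator $D(\lambda):=\alpha_\lambda(\partial_x)|_{z=0}\in\Lambda\mathfrak{su}(3)_\sigma$, hence $\chi(t,\lambda)=e^{tD(\lambda)}$.

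For uniqueness, let $\mathbb{F}_1,\mathbb{F}_2$ be two frames of the same immersion both obeying \eqref{eq:equiv-F_lambda}. By Proposition~\ref{prop:frame} and the Remark one has $\mathbb{F}_2=W\mathbb{F}_1 k$ with $W\in \Lambda SU(3)_\sigma$ constant in $z$ and $k:\St\to U(1)$. The cleanest route is via the Maurer--Cartan forms: both $\alpha^{(1)}=\mathbb{F}_1^{-1}d\mathbb{F}_1$ and $\alpha^{(2)}=\mathbb{F}_2^{-1}d\mathbb{F}_2=\mathrm{Ad}(k^{-1})\alpha^{(1)}+k^{-1}dk$ are $x$-invariant (by their respective equivariances). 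Writing $k=\mathrm{diag}(a,a^{-1},1)$ and inspecting the $\mathfrak{g}_5$-component (the coefficient of $\lambda^{-1}dz$), the adjoint action of $k^{-1}$ rescales the entries $ie^{u/2}$ in positions $(1,3)$ and $(3,2)$ of $U_{-1}$ by $a^{-1}$ and the entry $-i\psi e^{-u}$ in position $(2,1)$ by $a^{2}$. Since $e^{u/2}$ is nonvanishing and $y$-only, matching $x$-invariant coefficients on both sides forces $a$ (and hence $k$) to be $x$-independent.

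The main obstacle is the uniqueness step: one must rigorously constrain the a priori $z$-dependent $U(1)$ gauge to be $y$-only. My approach exploits the $\sigma$-graded decomposition of $\alpha_\lambda$ and the nonvanishing of $e^{u/2}$ in the $\mathfrak{g}_5$-component to pin down the $x$-behavior of the diagonal parameter $a$. An alternative approach, comparing the two generators $\chi_i(t,\lambda)$ via conjugation by $W$ and reading off the vanishing of $W^{-1}D_2 W-D_1$ from conjugation-invariance considerations along the one-parameter orbit of $\F_1$, is also viable but requires more algebraic bookkeeping.
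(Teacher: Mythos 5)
Your proposal is correct and follows essentially the same route as the paper: existence comes from the $x$-invariance of the Maurer--Cartan form of the normalized equivariant frame, the introduction of $\lambda$, and the observation that the coefficients of \eqref{eq:mathbbF} depend only on $y$ so that the normalized solution of \eqref{eq:mathbbF} is itself equivariant by ODE uniqueness. The only difference is that you spell out the final uniqueness claim (which the paper dismisses with ``it is easy to see'') by tracking the action of the diagonal gauge on the nonvanishing $e^{u/2}$-entries of the $\lambda^{-1}$-coefficient; this is a sound and welcome elaboration, not a departure from the paper's argument.
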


%%%%%%%%%%%%%%%%%%%%%%%%%%
\subsection{Burstall-Kilian theory for translationally equivariant minimal Lagrangian immersions}
\label{subsect:BK potential}
%%%%%%%%%%%%%%%%%%%%%%%%%%

In this section we assume that the frame is chosen as in 
Theorem \ref{essential-transfo}. Then,
following Burstall-Kilian (\cite{BuKi})
and setting $t=-x$ and $z=x+it$, 
we derive from  \eqref{eq:equiv-F_lambda}, 
\begin{eqnarray}\label{eq:F}
\mathbb{F}(z,\lambda)&=&e^{x D(\lambda)}\mathbb{F}(iy,\lambda). 
\end{eqnarray}
We also assume as before $\F(0,\lambda) = I$.
Then 
\begin{equation*}\label{eq:F-1dF}
\begin{split}
\alpha_{\lambda}=
\mathbb{F}^{-1}(z,\lambda)d\mathbb{F}(z,\lambda)&=\mathbb{F}(iy, \lambda)^{-1} D\mathbb{F}(iy, \lambda)dx+\mathbb{F}(iy,\lambda)^{-1}\frac{d}{dy}\mathbb{F}(iy,\lambda)dy.\\
\end{split}
\end{equation*}
So  
\begin{equation}\label{eq:AB}
A_\lambda(y):=\mathbb{F}(iy, \lambda)^{-1} D(\lambda)\mathbb{F}(iy, \lambda),\quad
B_\lambda(y):=\mathbb{F}(iy, \lambda)^{-1}\frac{d}{dy}\mathbb{F}(iy, \lambda)
\end{equation}
depend only on $y$.
Comparing to \eqref{eq:F_lambda}, we infer
\begin{equation}\label{eq:ABU-1}
\begin{split}
A_{\lambda}(y)&=\lambda^{-1}U_{-1}+U_{0}+V_{0}+\lambda V_{1},\\
B_{\lambda}(y)&=i(\lambda^{-1}U_{-1}+U_{0}-V_{0}-\lambda V_{1}),
\end{split}
\end{equation}
where we have used 
$$\alpha_{\lambda}=\lambda^{-1} U_{-1}dz+U_0 dz+V_0d\bar{z}+\lambda V_1 d\bar{z}.$$
Hence $A_{\lambda}(y), B_{\lambda}(y), D(\lambda)\subset \Lambda_1 su(3)_{\sigma}$ and
$\alpha_{\lambda}(\partial/\partial \bar{z})$ is holomorphic in $\lambda$. 
Set
$b_{\lambda}(y):=e^{-iyD(\lambda)} \mathbb{F} (iy, \lambda)$. Then its Maurer-Cartan form is given by $b_{\lambda}^{-1} db_{\lambda}=-2i \alpha_{\lambda}(\frac{\partial}{\partial \bar{z}})dy$.

Therefore, with the initial condition $\F(0,\lambda) = I$, we know that $b_{\lambda}$ is holomorphic in $\lambda$ and $b_{\lambda}\in \Lambda^{+}SL(3, \mathbb{C})_{\sigma}$.
It follows that
$$\mathbb{F}(z, {\lambda})=e^{xD(\lambda)}\mathbb{F}(iy, {\lambda})=e^{zD(\lambda)}b_{\lambda}(y)$$ 
is an Iwasawa decomposition of $\mathbb{F}$.
This means that $\mathbb{F}$ is generated by the degree one constant potential $D(\lambda)=A_{\lambda}(0) \in \Lambda su(3)_{\sigma}$.

Conversely,
for any constant degree one potential $D(\lambda)\in \Lambda su(3)_{\sigma}$, we have the solution 
$C(z,\lambda):=e^{zD(\lambda)}=e^{xD(\lambda)} e^{iyD(\lambda)}$ to $dC = CD(\lambda)dz$, $C(0,\lambda)=I$.
Assume that an Iwasawa decomposition of $e^{iyD(\lambda)}$ is given by 
\begin{equation}\label{Iwasawa-e^iyD}
e^{iyD(\lambda)}=U(y,\lambda)U_{+}(y,\lambda),
\end{equation}
where $U(y,\lambda): M \rightarrow \Lambda SU(3)_{\sigma}$
and $U_{+}(y,\lambda)\in \Lambda^{+} SL(3,\mathbb{C})_{\sigma}$.
Because $e^{xD(\lambda)}\in \Lambda SU(3)_{\sigma}$ for all $x\in \mathbb{R}$, we conclude that
an Iwasawa decomposition of $e^{zD(\lambda)}$ is given by
$e^{zD(\lambda)}=\mathbb{F}(z,\lambda)U_{+}(y,\lambda)$, where
\begin{equation}\label{eq:FU}
\mathbb{F}(z,\lambda)=e^{xD(\lambda)}U(y,\lambda).
\end{equation}
Hence, $\mathbb{F}(z,\lambda)$ is translationally equivariant.

Thus we conclude 
\begin{proposition} 
A minimal Lagrangian surface in $\mathbb{C}P^2$ is translationally equivariant if and only it is generated by a degree one constant potential
$D(\lambda)dz$.
In this case the immersion can be defined without loss of generality on all of $\C$. 
The potential function $D(\lambda)$ can be obtained from the extended frame $\F$ satisfying \eqref{eq:equiv-F_lambda} and $\F(0,\lambda)=I$ by the equation
$$D(\lambda)=\F(z,\lambda)^{-1}\partial_x\F(z,\lambda)|_{z=0}.$$
\end{proposition}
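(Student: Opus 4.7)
The plan is to assemble the preparatory computations of Section~\ref{Sec:trans equiv} into the two required implications and then read off the explicit formula for $D(\lambda)$.

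For the direct implication, I would start from Theorem~\ref{essential-transfo}: there is an extended frame $\F$ with $\F(0,\lambda)=I$ satisfying $\F(t+z,\lambda)=e^{tD(\lambda)}\F(z,\lambda)$ for some $D(\lambda)\in\Lambda su(3)_\sigma$. Setting $t=-x$ and $z=x+iy$ yields $\F(z,\lambda)=e^{xD(\lambda)}\F(iy,\lambda)$. The key move, already outlined above the proposition, is to introduce
$$b_\lambda(y):=e^{-iyD(\lambda)}\F(iy,\lambda), \quad b_\lambda(0)=I,$$
and to verify that its Maurer--Cartan form equals $-2i\,\alpha_\lambda(\partial/\partial\bar z)\,dy$. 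By \eqref{eq:F_lambda} the latter is a polynomial in $\lambda$ of non-negative degree, so $b_\lambda(y)$ stays in $\Lambda^{+}SL(3,\C)_\sigma$. Consequently
$$\F(z,\lambda)=e^{zD(\lambda)}b_\lambda(y)$$
is an Iwasawa decomposition of the holomorphic candidate $C(z,\lambda):=e^{zD(\lambda)}$, which is precisely the DPW statement that $\F$ is generated by the constant degree-one potential $D(\lambda)\,dz$.

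For the converse, I would take an arbitrary constant $D(\lambda)\in\Lambda_1 su(3)_\sigma$, set $C(z,\lambda):=e^{zD(\lambda)}$, and apply the Iwasawa decomposition to $e^{iyD(\lambda)}=U(y,\lambda)U_{+}(y,\lambda)$ as in \eqref{Iwasawa-e^iyD}. Since $e^{xD(\lambda)}\in\Lambda SU(3)_\sigma$ for every real $x$, the product $\F(z,\lambda):=e^{xD(\lambda)}U(y,\lambda)$ lies in $\Lambda SU(3)_\sigma$ and $e^{zD(\lambda)}=\F(z,\lambda)U_{+}(y,\lambda)$; the uniqueness clause of Theorem~\ref{Thm:Iwasawa} identifies this $\F$ as the extended frame of the associated minimal Lagrangian immersion. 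The equivariance $\F(t+z,\lambda)=e^{tD(\lambda)}\F(z,\lambda)$ can then be read off directly from the explicit product $e^{xD(\lambda)}U(y,\lambda)$. Because $e^{zD(\lambda)}$ and its Iwasawa decomposition exist for every $z\in\C$, the immersion $[\F(z,\lambda)e_3]$ is automatically defined on all of $\C$.

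For the closing formula, I would differentiate the equivariance relation in $t$ at $t=0$ to obtain $\partial_x\F(z,\lambda)=D(\lambda)\F(z,\lambda)$ and then evaluate at $z=0$, using $\F(0,\lambda)=I$, to read off $D(\lambda)=\F(z,\lambda)^{-1}\partial_x\F(z,\lambda)|_{z=0}$. I do not foresee any substantial obstacle: the analytic content (the positivity of $b_\lambda$ and the uniqueness of the Iwasawa factorization) has already been assembled above the statement, so the proposition is essentially a bookkeeping step that packages those computations into a clean if-and-only-if, together with an invariantly written formula for the potential.
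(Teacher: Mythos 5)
Your proposal is correct and follows essentially the same route as the paper: the forward direction via $b_\lambda(y)=e^{-iyD(\lambda)}\F(iy,\lambda)$ and the observation that its Maurer--Cartan form is holomorphic in $\lambda$, the converse via the Iwasawa splitting $e^{iyD(\lambda)}=U(y,\lambda)U_+(y,\lambda)$ and $\F=e^{xD(\lambda)}U(y,\lambda)$, and the final formula by differentiating the equivariance relation at $t=0$, $z=0$. No gaps.
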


\begin{remark}
Since any two frames satisfying   \eqref{eq:equiv-F_lambda}  and attaining $I$ at $z =0$ also satisfy equation \eqref{eq:F}, it is easy to see that $D(\lambda)$ is uniquely determined. From this it also follows again that two such frames only differ by some gauge $k(iy) \in U(1)$.

\end{remark}

%%%%%%%%%%%%%%%%%%%%%%%%%%%%%%
\section{Explicit Iwasawa decomposition for translationally equivariant minimal Lagrangian immersions}
\label{Sec:Iwasawa}
%%%%%%%%%%%%%%%%%%%%%%%%%%%%%%

%%%%%%%%%%%%%%%%%%%%%
\subsection{The basic set-up}
%%%%%%%%%%%%%%%%%%%%%

We have seen above in Section \ref{subsect:BK potential} that every translationally equivariant minimal Lagrangian immersion can be obtained from some potential of the form 
\begin{equation*}
\eta = D(\lambda) dz,
\end{equation*}
where
\begin{equation}\label{eq:D}
D(\lambda) = \lambda^{-1} D_{-1} + D_0 + \lambda D_1 \in \Lambda su(3)_\sigma.
\end{equation}

The general loop group approach requires to consider the solution to $dC = C \eta, C(0,\lambda ) = I$. This is easily achieved by
$C(z,\lambda) = \exp(zD(\lambda)).$

Next one needs to perform an Iwasawa splitting.
In general this is very complicated and difficult to carry out explicitly.
But, for translationally equivariant minimal Lagrangian surfaces in $\mathbb{C}P^2$,  one is able to carry out an explicit Iwasawa decomposition of $\exp(zD(\lambda))$. 

From \eqref{eq:F}, \eqref{Iwasawa-e^iyD} and \eqref{eq:FU}, we get
\begin{equation}\label{eq:F(iy)}
F(iy,\lambda)=U(y,\lambda)=e^{iyD(\lambda)} U_{+}(y,\lambda)^{-1}.
\end{equation}
Substituting \eqref{eq:F(iy)} into \eqref{eq:AB}, we obtain
\begin{equation}\label{eq:A_lambda}
\begin{split}
A_{\lambda}(y)&=U_{+}(y,\lambda)D(\lambda)U_{+}(y,\lambda)^{-1},\\
B_{\lambda}(y)&=U_{+}(y,\lambda)i D(\lambda)U_{+}(y,\lambda)^{-1}- 
\frac{d}{dy}U_{+}(y,\lambda)   U_{+}(y,\lambda)^{-1}.
\end{split}
\end{equation}

Comparing this to \eqref{eq:ABU-1}, we obtain the equations
\begin{equation} \label{eq:Equ1}
U_{+}(y,\lambda)D(\lambda)U_{+}^{-1}(y,\lambda)=
\lambda^{-1}U_{-1}+U_0+\lambda V_1+V_0
=:\Omega,
\end{equation}
\begin{equation} \label{eq:Equ2}
\frac{d}{dy}U_{+}(y,\lambda) U_{+}(y,\lambda)^{-1}=
2i(\lambda V_1+V_0).
\end{equation}

It is important to note that
because $U_{+}$ only depends on $y$ and $\F$ satisfies \eqref{eq:mathbbF}, the matrix $\Omega$  
is of the form
\begin{equation*}\label{eq:Omega}
\Omega
=\begin{pmatrix}
\frac{u_z-u_{\bar z}}{2}&-i\lambda \bar{\psi}e^{-u}&i\lambda^{-1}e^{\frac{u}{2}}\\
-i\lambda^{-1}\psi e^{-u}&-\frac{u_z-u_{\bar z}}{2}&i\lambda e^{\frac{u}{2}} \\
i\lambda e^{\frac{u}{2}}&i\lambda^{-1} e^{\frac{u}{2}}&0
\end{pmatrix},
\end{equation*}
where $u$ only depends on $y$ and $\psi$ is constant.

The equations \eqref{eq:Equ1} and \eqref{eq:Equ2}
are the basis for an explicit computation of the Iwasawa decomposition of 
$\exp(z D(\lambda))$. There will be two steps:

{\bf Step 1:} Solve  equation \eqref{eq:Equ1} 
by some matrix $Q$.
Then $U_+$ and $Q$ satisfy 
\begin{equation*}
U_+ = Q E, \hspace{2mm} \text{ where}  \hspace{2mm} E \hspace{2mm}  \text{commutes with} \hspace{2mm} D.
\end{equation*}

{\bf Step 2:} Solve  equation \eqref{eq:Equ2}. This will generally only mean to carry out two integrations in one variable. 

%%%%%%%%%%%%%%%%%%%%%%%%%%%%%%%
\subsection{Evaluation of the characteristic polynomial equations}
%%%%%%%%%%%%%%%%%%%%%%%%%%%%%%%

Step 1 mentioned above actually consists of two sub-steps.
First of all one determines $\Omega$ from $D$ and then one computes a solution $Q$ the equation \eqref{eq:Equ1}.

In this section we will discuss the first sub-step. 
In our case we observe that $D$ and $\Omega$ are conjugate and therefore have the same characteristic polynomials.
Using the explicit form of $\Omega$ stated just above and 
writing $D$ in the form 
\begin{eqnarray*}
D=\begin{pmatrix}
\alpha &-\lambda \bar{b} & \lambda^{-1} a \\
\lambda^{-1} b &-\alpha&-\lambda \bar{a}\\
-\lambda \bar{a} &\lambda^{-1} a &0
\end{pmatrix}\in \Lambda_1 \subset \Lambda su(3)_{\sigma},
\end{eqnarray*}
where $\alpha, a$ and $b$ are constants,
\eqref{eq:Equ1} leads to
\begin{eqnarray}
&&2e^{u}+|\psi|^2e^{-2u}+\frac{1}{4}(u^\prime)^2=-\alpha^2+2|a|^2+|b|^2=:\beta,\label{eq:equiv1}\\
&&\psi=-ia^2b,\label{eq:equiv2}
\end{eqnarray}
where $\alpha, a, b$ and $\psi$ are constants. 

\begin{remark} We have seen that if $\psi \equiv 0,$ then the surface is totally geodesic, hence the image of $f$ is an open portion of the real projective plane.
We will ignore this case from now on and will assume that $\psi\not\equiv 0.$
\end{remark}

%%%%%%%%%%%%%%%%%%%%%%
\subsection{Explicit solutions for metric and cubic form}
\label{subsec:metric}
%%%%%%%%%%%%%%%%%%%%%%%

If $u^{\prime}\equiv 0$, then $u$ is constant and the surface is flat. 
It is well known that the only flat minimal Lagrangian surface in $\C P^2$ is an open subset of the Clifford torus up to isometries of $\C P^2$ (\cite{LOY}). In the following we will assume $u^{\prime}\not \equiv 0$.

Notice that \eqref{eq:equiv1} is a first integral of the Gauss equation
\begin{equation}\label{eq:GaussODE0}
\frac{1}{4}u^{\prime\prime}+e^u-|\psi|^2 e^{-2u}=0.
\end{equation}

Making the change of variables $w=e^{u}$ in \eqref{eq:equiv1}, we obtain equivalently
\begin{equation}\label{eq:w0}
(w^{\prime})^2+8w^3-4 \beta w^2 +4|\psi|^2=0.
\end{equation}

Since we assume $\psi \neq 0$, the solutions to \eqref{eq:w0} are given in terms of bounded Jacobi elliptic functions.
Since all Jacobi elliptic functions are periodic, there exists a point,  where the derivative of $u$ vanishes. 
Choosing this point as the origin, we can always assume $u^{\prime}(0)=0$.
For our loop group setting this has an important consequence:

\begin{theorem}
By choosing the coordinates such that the metric for a given translationally equivariant minimal Lagrangian immersion has a vanishing derivative at $z = 0$, we obtain that the generating matrix $D$ in \eqref{eq:D} satisfies $D_0 = 0$.
\end{theorem}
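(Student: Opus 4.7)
The plan is to read off $D_0$ directly from the matrix $\Omega$ appearing in equation \eqref{eq:Equ1}, evaluated at $y=0$. The key observation is that with the normalization $\mathbb{F}(0,\lambda) = I$ built into the setup of Theorem \ref{essential-transfo}, the Iwasawa factor $U_+$ of $e^{iyD(\lambda)}$ also satisfies $U_+(0,\lambda)=I$; this follows from \eqref{eq:F(iy)} since $I = \mathbb{F}(0,\lambda) = U_+(0,\lambda)^{-1}$ (using uniqueness of the normalized Iwasawa splitting from Theorem \ref{Thm:Iwasawa}).

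Given $U_+(0,\lambda)=I$, equation \eqref{eq:Equ1} evaluated at $y=0$ yields
\begin{equation*}
D(\lambda) \;=\; \Omega\big|_{y=0}.
\end{equation*}
Next I would expand $\Omega$ in powers of $\lambda$ and pick out the $\lambda^0$ part. From the explicit form of $\Omega$ displayed just before Step 1, the only contribution to the $\lambda^0$ coefficient comes from the diagonal, giving
\begin{equation*}
\Omega_0 \;=\; \mathrm{diag}\!\left(\tfrac{u_z - u_{\bar z}}{2},\,-\tfrac{u_z - u_{\bar z}}{2},\,0\right).
\end{equation*}
Since $u$ depends only on $y$ in the translationally equivariant setting, one computes $u_z - u_{\bar z} = -i\,u'(y)$, so the $\lambda^0$ part of $\Omega$ is $-\tfrac{i}{2}u'(y)\,\mathrm{diag}(1,-1,0)$.

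Matching coefficients in $D(\lambda) = \lambda^{-1}D_{-1} + D_0 + \lambda D_1$ and evaluating at $y=0$ therefore gives
\begin{equation*}
D_0 \;=\; -\tfrac{i}{2}\,u'(0)\,\mathrm{diag}(1,-1,0).
\end{equation*}
Under the coordinate choice that makes $u'(0)=0$, this yields $D_0 = 0$, as claimed. I do not expect a real obstacle here; the only care needed is the bookkeeping that $\mathbb{F}(0,\lambda)=I$ forces $U_+(0,\lambda)=I$, so that the conjugation in \eqref{eq:Equ1} becomes trivial at the origin and one can literally equate $D(\lambda)$ with the value of $\Omega$ at $y=0$.
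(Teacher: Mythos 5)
Your proof is correct and follows essentially the same route as the paper: the authors likewise identify $D(\lambda)$ with $A_\lambda(0)=\Omega|_{y=0}$ (using $\mathbb{F}(0,\lambda)=I$, equivalently your observation that $U_+(0,\lambda)=I$) and read off the diagonal entry $\alpha=-\tfrac{i}{2}u'(0)$, which vanishes under the chosen normalization. Your bookkeeping of the $\lambda^0$ coefficient and the identity $u_z-u_{\bar z}=-i\,u'(y)$ matches the paper's computation exactly.
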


We will therefore always assume this condition.
This convention in combination with \eqref{eq:equiv1}  implies
\begin{equation}\label{eq:equia1beta}
2a_1+\frac{|\psi|^2}{a_1^2}=\beta,
\end{equation}
where $a_1:=e^{u(0)}>0$.

The following computations are very similar to the ones given in \cite{CU94}. We include them for the convenience of the reader.

Using \eqref{eq:equia1beta} it is easy to verify that  \eqref{eq:w0} can be rewritten in the form
\begin{equation}\label{eq:w'}
(w^\prime)^2+8(w-a_1)(w-a_2)(w+a_3)=0,
\end{equation}
where
\begin{eqnarray*}
a_1&=&e^{u(0)}>0,\\
a_2&=& \frac{\frac{\beta}{2}-a_1+\sqrt{(\frac{\beta}{2}-a_1)^2+4a_1 (\frac{\beta}{2}-a_1)}}{2}>0,\\
a_3&=& \frac{-(\frac{\beta}{2}-a_1)+\sqrt{(\frac{\beta}{2}-a_1)^2+4a_1(\frac{\beta}{2}-a_1)} }{2}>0.
\end{eqnarray*}
Since $\frac{\beta}{2}-a_1=\frac{|\psi|^2}{2a_1^2}>0$, we know $a_2> a_3$. 
Notice that $a_1=a_2$ if and only if  $\beta=3|\psi|^{2/3}$, then  \eqref{eq:GaussODE0} has the unique solution $u \equiv \frac{2}{3} \log |\psi|$, which conflicts our starting assumption $u^{\prime}\not\equiv 0$. Therefore we can assume without loss of generality that $a_1>a_2$ holds.

Then with $a_2<w<a_1$, \eqref{eq:w'} leads to
$$dy=\frac{1}{\sqrt{-8(w-a_1)(w-a_2)(w+a_3)}}dw.$$
Integrating gives 
\begin{equation*}
y=-\frac{1}{\sqrt{2(a_1+a_3)}}J(\arcsin \sqrt{\frac{a_1-w}{a_1-a_2}}, k),
\end{equation*}
where 
\begin{equation}\label{eq:k}
k^2=\frac{a_1-a_2}{a_1+a_3}
\end{equation}
and $J$ denotes the elliptic integral of the first kind
$$J(\theta,k)=\int_0^{\theta}\frac{d\alpha}{\sqrt{1-k^2\sin^2\alpha}}, $$
for any $0\leq k\leq 1$.
Thus the solution to \eqref{eq:w0} is 
\begin{equation}\label{eq:equiv_u0}
w(y)=e^{u(y)}=a_1(1-q^2\mathrm{sn}^2(ry,k)),
\end{equation}
where 
\begin{equation}\label{eq:q&r}
q^2=\frac{a_1-a_2}{a_1}, \quad r=\sqrt{2(a_1+a_3)}.
\end{equation}
It is easy to see that the solution $u(y)$ is an even periodic function with period $2T$, where $T=\frac{K}{r}$ and $K=J(\frac{\pi}{2}, k)$ is the complete elliptic integral of the first kind. 
Thus for any (in $x-$direction) translationally equivariant minimal Lagrangian surface in $\mathbb{C}P^2$, its metric conformal factor $e^u$ is given by  \eqref{eq:equiv_u0} in terms of a Jacobi elliptic function and its cubic Hopf differential is constant and given by \eqref{eq:equiv2}.

%%%%%%%%%%%%%%%%%%%%%%%%%%

\subsection{Explicit Iwasawa decompositions} 

Recall that for translationally equivariant minimal Lagrangian surfaces, the potential matrix $D(\lambda)$ coincides with $A_{\lambda}(0)=\Omega|_{y=0}$ in \eqref{eq:A_lambda}, so we have 
(including the convention above about the origin)
\begin{equation*}\label{eq:D=Alambda0}
D(\lambda)=\begin{pmatrix}
0&-i\lambda\bar{\psi}e^{-u(0)}&i\lambda^{-1}e^{\frac{u(0)}{2}}\\
-i\lambda^{-1}\psi e^{-u(0)}&0&i\lambda e^{\frac{u(0)}{2}}\\
i\lambda e^{\frac{u(0)}{2}}&i\lambda^{-1} e^{\frac{u(0)}{2}}&0
\end{pmatrix},
\end{equation*}
where  $\alpha=-\frac{iu^{\prime}(0)}{2}=0$, $a=ie^{\frac{u(0)}{2}}$
and $b=-i\psi e^{-u(0)}$.
We may summarize the following proposition:
\begin{proposition} Up to isometries in $\mathbb{C}P^2$, any translationally equivariant minimal Lagrangian surface can be generated by a potential of the form 
\begin{equation}\label{eq:Dalpha=0}
\begin{pmatrix}
0&-\lambda \bar{b}&\lambda^{-1}a\\
\lambda^{-1}b&0&-\lambda \bar{a}\\
-\lambda\bar{a}&\lambda^{-1} a&0
\end{pmatrix} dz,
\end{equation}
where $a$ is purely imaginary and $b=\frac{i\psi}{a^2}$ are constants.
\end{proposition}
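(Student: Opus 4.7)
The plan is to combine three facts already established in the preceding sections. First, by the proposition at the end of Section \ref{Sec:trans equiv}, any translationally equivariant minimal Lagrangian immersion is generated by a constant degree-one potential $D(\lambda)\in\Lambda_1 su(3)_\sigma$ of the general form displayed just before \eqref{eq:equiv1}. Second, by the theorem in Subsection \ref{subsec:metric}, the diagonal $\alpha$ in $D$ vanishes as soon as one chooses coordinates so that $u'(0)=0$. Third, the matching $D(\lambda)=\Omega|_{y=0}$ from \eqref{eq:Equ1}, together with the algebraic relation \eqref{eq:equiv2}, will pin down $a$ and $b$ explicitly.

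The first step I would carry out is the base-point shift. Because the flat case $u'\equiv 0$ has been excluded, the explicit expression \eqref{eq:equiv_u0} for $u(y)$ is a nonconstant periodic Jacobi elliptic function, so there exists a real $y_0$ with $u'(y_0)=0$. Replacing $\F(z,\lambda)$ by $\tilde\F(z,\lambda):=\F(iy_0,\lambda)^{-1}\F(z+iy_0,\lambda)$ yields an extended frame that still satisfies $\tilde\F(0,\lambda)=I$ and \eqref{eq:equiv-F_lambda}, and whose associated immersion at $\lambda=1$ is $\tilde f(z)=[\F(iy_0,1)^{-1} f(z+iy_0)]$. The latter is $f$, reparametrized by $z\mapsto z+iy_0$ and then composed with the $SU(3)$-isometry $[w]\mapsto[\F(iy_0,1)^{-1}w]$ of $\C P^2$. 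Thus $\tilde f$ is an isometric copy of $f$, and by construction its generating potential satisfies $\alpha=0$.

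After this normalization I would simply read off the entries of $D(\lambda)$ from the explicit formula for $\Omega$ evaluated at $y=0$: the $(1,3)$-entry gives $a=ie^{u(0)/2}\in i\R$, and the $(2,1)$-entry gives $b=-i\psi e^{-u(0)}$. Since $a^2=-e^{u(0)}$, the latter rewrites as $b=i\psi/a^2$, which is also consistent with \eqref{eq:equiv2}. The potential is then exactly of the form \eqref{eq:Dalpha=0}, completing the proof.

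The only conceptual obstacle is the second paragraph: one must recognize that a shift of the base point in $\St$ produces a genuine ambient isometry of the immersion in $\C P^2$, via the principle that left multiplication of an $SU(3)$-frame by a fixed matrix implements the corresponding isometry of $\C P^2$ at $\lambda=1$. Everything else is direct bookkeeping with the identifications already established.
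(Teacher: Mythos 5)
Your proposal is correct and follows essentially the same route as the paper: invoke the degree-one constant potential result, normalize the base point so that $u'(0)=0$ (hence $D_0=0$), and read off $a=ie^{u(0)/2}$ and $b=-i\psi e^{-u(0)}=i\psi/a^2$ from $D(\lambda)=A_\lambda(0)=\Omega|_{y=0}$. The only difference is that you spell out explicitly why the base-point shift amounts to an ambient $SU(3)$-isometry of $\C P^2$, a point the paper leaves implicit in the phrase \lq\lq up to isometries\rq\rq.
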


Thus the characteristic polynomial of $D(\lambda)$ in \eqref{eq:Dalpha=0} is given by
\begin{equation} \label{eq:charpol}
\det(\mu I-D(\lambda))=\mu^3+\beta \mu-2i\mathrm{Re}(\lambda^{-3}\psi),
\end{equation}
where 
\begin{equation} \label{D-eig}
\begin{split}
\beta&=2|a|^2+|b|^2=2e^{u(0)}+|\psi|^2 e^{-2u(0)}:=2a_1+\frac{|\psi|^2}{a_1^2}.
\end{split}
\end{equation}

\begin{lemma} \label{D-roots}
 With the notation introduced above we have
\begin{enumerate}
\item  The characteristic polynomial \eqref{eq:charpol}  of $D(\lambda)$ has purely imaginary roots which depend on $\lambda$, 
but not on $z$.
\item For any non-flat minimal Lagrangian surface in $\C P^2$, the characteristic polynomial \eqref{eq:charpol} of $D(\lambda)$ 
has three distinct roots for any choice of $\lambda \in S^1$.
 Moreover, the root $0$ occurs if and only if  $\lambda^{-3}\psi$ is purely imaginary. This case can only happen for six different values of $\lambda$.
\end{enumerate}
\end{lemma}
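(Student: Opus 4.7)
My plan is to reduce everything to the cubic
\[
p(\mu)=\mu^{3}+\beta\mu-2i\,\mathrm{Re}(\lambda^{-3}\psi)
\]
and then analyse it directly, using the arithmetic-geometric-mean inequality at the key point.

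For part (1), the matrix $D(\lambda)$ is manifestly constant (independent of $z$) by its definition in \eqref{eq:Dalpha=0}, so the same is true of $p(\mu)$; also, for $\lambda\in S^{1}$ the fact that $D(\lambda)\in \Lambda su(3)_{\sigma}$ means $D(\lambda)$ is skew-Hermitian, so its eigenvalues lie on the imaginary axis. (One can alternatively check this by inspection of $p$: the substitution $\mu=i\nu$ clears all the $i$'s out of the equation.)

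For part (2), the plan is: write $\mu=i\nu$ with $\nu\in\R$; then $p(\mu)=0$ becomes the real cubic
\[
q(\nu):=\nu^{3}-\beta\nu+2\,\mathrm{Re}(\lambda^{-3}\psi)=0.
\]
By part (1) every complex root of $p$ is of this form, so the three roots of $p$ are distinct iff $q$ has three distinct real roots, iff the discriminant $4\beta^{3}-27\bigl(2\,\mathrm{Re}(\lambda^{-3}\psi)\bigr)^{2}$ is strictly positive. Since $|\mathrm{Re}(\lambda^{-3}\psi)|\le |\psi|$ for $\lambda\in S^{1}$, it suffices to show $\beta^{3}>27|\psi|^{2}$. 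This is where the non-flat hypothesis enters: recalling from \eqref{D-eig} that $\beta=2a_{1}+|\psi|^{2}/a_{1}^{2}$ with $a_{1}=e^{u(0)}>0$, the AM-GM inequality applied to the three positive numbers $a_{1},\,a_{1},\,|\psi|^{2}/a_{1}^{2}$ gives
\[
\beta=a_{1}+a_{1}+\frac{|\psi|^{2}}{a_{1}^{2}}\ge 3\sqrt[3]{|\psi|^{2}}=3|\psi|^{2/3},
\]
with equality precisely when $a_{1}^{3}=|\psi|^{2}$. But that equality case forces $e^{u}$ to be the constant stationary point of the Gauss equation \eqref{eq:GaussODE0}, i.e. $u^{\prime}\equiv 0$, which is the flat case excluded by hypothesis (as discussed at the beginning of \S\ref{subsec:metric}). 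Hence the inequality is strict and $\beta^{3}>27|\psi|^{2}\ge 27(\mathrm{Re}(\lambda^{-3}\psi))^{2}$, so the three roots are distinct for every $\lambda\in S^{1}$.

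Finally, $0$ is a root of $p$ iff $\mathrm{Re}(\lambda^{-3}\psi)=0$, proving the first part of the last sentence. To count: writing $\lambda=e^{i\theta}$ and $\psi=|\psi|e^{i\varphi}$ (recall we already assumed $\psi\ne 0$), this condition reads $\cos(\varphi-3\theta)=0$, i.e. $3\theta\equiv \varphi-\tfrac{\pi}{2}\pmod{\pi}$, which has exactly six solutions $\theta\in[0,2\pi)$, yielding six values of $\lambda\in S^{1}$. The only delicate point in the whole argument is ensuring the AM-GM equality case is exactly the excluded flat case; once that is observed, everything else is routine.
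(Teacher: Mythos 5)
Your proof is correct and follows essentially the same route as the paper: both reduce Claim 2 to the discriminant inequality $(\beta/3)^3>[\mathrm{Re}(\lambda^{-3}\psi)]^2$, bound $\mathrm{Re}(\lambda^{-3}\psi)$ by $|\psi|$, and establish $\beta>3|\psi|^{2/3}$ by minimizing $\beta=2a_1+|\psi|^2/a_1^2$ over $a_1>0$ (your AM--GM step is just a different packaging of the paper's computation of that minimum), with the equality case identified as the excluded flat surface. The treatment of Claim 1 and the count of six values of $\lambda$ also match the paper's argument.
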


\begin{proof}
Claim 1 simply follows from the observation that  entries of $D(\lambda)\in su(3)$ only depend on $\lambda\in S^1$. 
The characteristic polynomial \eqref{eq:charpol} of $D(\lambda)$ has three distinct purely imaginary roots if and only if its discriminant satisfies
$$(\frac{\beta}{3})^3-[\mathrm{Re}(\lambda^{-3}\psi)]^2>0.$$

Regarding $\beta$ as a function of $a_1\in (0,\infty)$ it is easy to see that $\beta$ attains the minimum value $3|\psi|^{\frac{2}{3}}$ when $a_1=|\psi|^{\frac{2}{3}}$.  In this case $D(\lambda)$ has multiple eigenvalues and $a_2=a_1=|\psi|^{\frac{2}{3}}$, $a_3=|\psi|^{\frac{2}{3}}/2$. It follows from \eqref{eq:w'} that the corresponding surface is flat.  We have excluded this case. 
Therefore we have $\beta>3|\psi|^{\frac{2}{3}}$, which completes the proof of Claim 2.
\end{proof}

Now take 
\begin{equation}\label{eq:Q_0}
Q_0=\mathrm{diag}(i a^{-1} e^{\frac{u}{2}}, -i a e^{-\frac{u}{2}},1),
\end{equation}
such that 
$$\hat{\Omega}=Q_0^{-1}\Omega Q_0=\begin{pmatrix}
-\frac{iu^{\prime}}{2}&i\lambda \bar{\psi}a^2 e^{-2u}&\lambda^{-1}a\\
\lambda^{-1}b&\frac{iu^{\prime}}{2}&-\lambda a^{-1}e^u\\
-\lambda a^{-1}e^u&\lambda^{-1}a&0
\end{pmatrix}$$
has the same coefficients at $\lambda^{-1}$ as $D(\lambda)$.

Then by a straightforward  computation, we solve $\tilde{Q}D(\lambda)\tilde{Q}^{-1}= \hat{\Omega}$ by the following matrix
\begin{equation}\label{eq:checkQ}
\tilde{Q}=\frac{\lambda^3}{\kappa}\begin{pmatrix}
\check{p}&\check{q}&\check{v}_1\\ 
\check{s}&\check{t}&\check{v}_2\\
0&0&\check{c}
\end{pmatrix},
\end{equation}
where
\begin{equation}\label{eq:pqstvc}
\begin{split}
\check{p}&= -|a|^2 \frac{u^{\prime}}{2}+\lambda^3 \bar{\psi}|a|^2 e^{-u}-\lambda^{-3}\psi,\\
\check{q}&= \frac{\lambda^{-2}a}{\bar{a}}[\frac{u^{\prime}}{2} |a|^2 -\lambda^3\bar{\psi}e^{-u}(|a|^2-e^u)],\\
\check{s}&=\frac{\lambda^2}{a^2}[|a|^2 \frac{u^{\prime}}{2}e^u+\lambda^{-3} \psi (|a|^2- e^u)],\\
\check{t}&=\frac{1}{|a|^2} (-|a|^2 \frac{u^{\prime}}{2}e^u+\lambda^3 \bar{\psi}e^u -\lambda^{-3}\psi |a|^2),\\
\check{v}_1&=-2i\lambda^{-1}a (|a|^2-e^u),\\
\check{v}_2&=-2i\lambda a^{-1} e^u (|a|^2-e^u),\\
\check{c}&=\lambda^3 \bar{\psi}-\lambda^{-3}\psi-e^u u^{\prime},\\
\kappa&=(\lambda^6 \bar{\psi}-\psi -\lambda^3e^u u^{\prime})^{2/3}( \lambda^6 \bar{\psi}-\psi)^{1/3}.
\end{split}
\end{equation}

Moreover, $\det \tilde{Q}=1$ and  $\tilde{Q}(0,\lambda)=Q_0(0,\lambda)=I$ due to $a=ie^{\frac{u(0)}{2}}$.
If $\lambda$ is small, the denominator of the coefficient of $\tilde{Q}$ is single-valued.
Altogether we have found a solution to 
equation \eqref{eq:Equ1} by $Q=Q_0 \tilde{Q}$.

\bigskip

Since also $U_+$ has the same properties, we obtain that $E = Q^{-1} U_+$ 
has determinant $1$, attains the value $I$ for $z=0$, is holomorphic for all small $\lambda$ and satisfies
$[Q^{-1}U_{+}, D]=0.$

By Lemma \ref{D-roots}  
 we can assume without loss of generality that 
 $D=D(\lambda )$  is regular semi-simple for all but finitely many values of $\lambda$. Therefore, for all small $z$ and small $\lambda$ we can write
$E = \exp ( \mathcal{E} ) $, where $[\mathcal{E}, D]=0.$

Since, in the computation of $Q$, we did not worry about the twisting condition, the matrix $\mathcal{E}$ is possibly an untwisted loop matrix in $SL(3,\C).$ But since $SL(3,\C)$ has rank $2$, for any regular semi-simple 
matrix $D=D(\lambda),$ the commutant of $D(\lambda)$ is spanned by $D(\lambda)$ and one other matrix.

\begin{lemma} Every element in the commutant 
$\{X\in \Lambda sl(3,\mathbb{C})_{\sigma}: [X,D]=0\}$
of $D(\lambda)$ has the form 
$X(\lambda)=\kappa_1(\lambda) D(\lambda)+\kappa_2(\lambda)L_0(\lambda)$ with 
$\kappa_1(\epsilon\lambda)=\kappa_1(\lambda)$, $\kappa_2(\epsilon\lambda)=-\kappa_2 (\lambda)$,
where $L_0=D^2(\lambda) -\frac{1}{3} \mathrm{tr}(D^2)  I$.
\end{lemma}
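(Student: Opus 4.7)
The plan is to combine the standard description of the centralizer of a regular semisimple matrix in $sl(3,\mathbb{C})$ with the $\sigma$-twisting that defines $\Lambda sl(3,\mathbb{C})_\sigma$.

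First, I would argue pointwise in $\lambda$. By Lemma \ref{D-roots}, for all but finitely many $\lambda$ the matrix $D(\lambda)$ has three distinct eigenvalues and is thus regular semisimple. At any such $\lambda$, the centralizer of $D(\lambda)$ in $gl(3,\mathbb{C})$ is spanned by $I, D(\lambda), D(\lambda)^2$, and passing to $sl(3,\mathbb{C})$ it is spanned by $D(\lambda)$ together with the traceless part $L_0(\lambda) = D(\lambda)^2 - \frac{1}{3}\mathrm{tr}(D(\lambda)^2) I$. Hence for any $X \in \Lambda sl(3,\mathbb{C})_\sigma$ with $[X,D]=0$ we obtain, uniquely at generic $\lambda$, scalar functions $\kappa_1(\lambda), \kappa_2(\lambda)$ with
\begin{equation*}
X(\lambda) = \kappa_1(\lambda)\, D(\lambda) + \kappa_2(\lambda)\, L_0(\lambda),
\end{equation*}
and these extend to the full loop parameter by the regularity of $X$, $D$ and $L_0$.

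Next, I would compute how $\sigma$ acts on $D$ and on $L_0$. By the very definition of $\Lambda sl(3,\mathbb{C})_\sigma$ the twisting reads $\sigma(Y(\lambda)) = Y(\epsilon\lambda)$; in particular $\sigma(D(\lambda)) = D(\epsilon\lambda)$. However, $\sigma$ is not multiplicative on matrix products: from $\sigma(\xi) = -P\xi^t P^{-1}$ a direct computation gives $\sigma(YZ) = -\sigma(Z)\sigma(Y)$, so that $\sigma(D^2) = -\sigma(D)^2 = -D(\epsilon\lambda)^2$. Moreover $\mathrm{tr}(D(\lambda)^2)$ is constant in $\lambda$: since $D_0 = 0$ by the normalization established above, $D(\lambda)^2 = \lambda^{-2}D_{-1}^2 + (D_{-1}D_1 + D_1D_{-1}) + \lambda^2 D_1^2$, and the explicit form of the eigenspaces $\mathfrak{g}_{2}$ and $\mathfrak{g}_{4}$ listed in Section \ref{sec:miL} shows that $\mathrm{tr}(D_{\pm 1}^2)=0$ (equivalently, $\mathrm{tr}(D(\lambda)^2)$ is a Laurent polynomial in $\lambda$ invariant under $\lambda \mapsto \epsilon\lambda$, which forces only the constant term to survive). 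Together with $\sigma(I) = -I$ this yields $\sigma(L_0(\lambda)) = -L_0(\epsilon\lambda)$.

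Finally, I would impose the twisting condition $\sigma(X(\lambda)) = X(\epsilon\lambda)$, which now reads
\begin{equation*}
\kappa_1(\lambda) D(\epsilon\lambda) - \kappa_2(\lambda) L_0(\epsilon\lambda) = \kappa_1(\epsilon\lambda) D(\epsilon\lambda) + \kappa_2(\epsilon\lambda) L_0(\epsilon\lambda).
\end{equation*}
The linear independence of $D(\epsilon\lambda)$ and $L_0(\epsilon\lambda)$ at generic $\lambda$ delivers exactly $\kappa_1(\epsilon\lambda) = \kappa_1(\lambda)$ and $\kappa_2(\epsilon\lambda) = -\kappa_2(\lambda)$. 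The main obstacle is bookkeeping the sign that appears because $\sigma$ acts via a \emph{negative} transpose and hence reverses matrix products; the payoff is that this sign is precisely what forces $\kappa_2$ to be $\sigma$-anti-invariant while $\kappa_1$ is $\sigma$-invariant.
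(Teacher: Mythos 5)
Your argument is correct, and it supplies precisely the details the paper omits: the lemma is stated there without proof, preceded only by the remark that the commutant of a regular semisimple element of $sl(3,\mathbb{C})$ is two-dimensional, so your pointwise reduction to $\mathrm{span}\{D,L_0\}$ followed by the twisting computation $\sigma(D^2)=-\sigma(D)^2$, $\sigma(I)=-I$, $\mathrm{tr}(D(\lambda)^2)=-2\beta$ constant, hence $\sigma(L_0(\lambda))=-L_0(\epsilon\lambda)$, is exactly the intended route. One small slip: $D_{\pm1}^2$ does \emph{not} lie in $\mathfrak{g}_{\pm2}$, because $\sigma$ reverses products with a sign, so that the anticommutator of two elements of $\mathfrak{g}_a$ lands in $\mathfrak{g}_{2a+3}$ (indeed a direct computation puts $D_{-1}^2$ in $\mathfrak{g}_1$); but your parenthetical alternative (invariance of the Laurent polynomial $\mathrm{tr}(D(\lambda)^2)$ under $\lambda\mapsto\epsilon\lambda$ kills the $\lambda^{\pm2}$ coefficients), or simply reading off $\mathrm{tr}(D^2)=-2\beta$ from the characteristic polynomial \eqref{eq:charpol}, makes the conclusion $\mathrm{tr}(D_{\pm1}^2)=0$ correct, so nothing is lost.
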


\begin{corollary}
The matrix $Q^{-1} U_+$ has the form
\begin{equation*}
Q^{-1}U_{+}=\exp(\beta_1 D+\beta_2 L_0),
\end{equation*}
where $\beta_1$ and $\beta_2$ are functions of $y$ and $\lambda$ near $0$.
\end{corollary}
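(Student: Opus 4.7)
The plan is to follow exactly the outline sketched immediately before the corollary: locally write $E := Q^{-1}U_+$ as the exponential of a Lie-algebra element, transfer the commutation relation $[E, D]=0$ into $sl(3,\C)$, and then exploit the fact that $sl(3,\C)$ has rank $2$ to parametrize the centralizer of $D$ by the pair $\{D, L_0\}$.

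First I would use that $E$ inherits from $Q$ and $U_+$ the properties of being holomorphic in $\lambda$ near $\lambda = 0$, of having determinant $1$, and of attaining $I$ at $z = 0$. On a neighborhood of $(y,\lambda) = (0,0)$ the matrix $E$ is therefore close enough to $I$ that the exponential $\exp : sl(3,\C) \to SL(3,\C)$ restricts to a diffeomorphism between suitable neighborhoods of $0$ and $I$. Setting $\mathcal{E}(y,\lambda) := \log E(y,\lambda)$ gives $E = \exp(\mathcal{E})$ with $\mathcal{E}(0,\lambda) = 0$ and $\mathcal{E}$ having the same regularity as $E$.

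Next I would deduce $[\mathcal{E}, D] = 0$. Indeed $[E,D]=0$ reads $EDE^{-1} = D$, equivalently $\exp(\mathrm{ad}_{\mathcal{E}})(D) = D$; since $\exp$ is a local diffeomorphism near $I$, the pointwise stabilizer of $D$ in a small neighborhood of $I \in SL(3,\C)$ corresponds precisely to a neighborhood of $0$ in the centralizer $\{X \in sl(3,\C) : [X,D]=0\}$, so $\mathcal{E}$ is forced into this centralizer. Now the pointwise linear algebra already recalled in the excerpt applies: by Lemma \ref{D-roots} the matrix $D(\lambda)$ is regular semi-simple for all but finitely many $\lambda \in S^1$, and for such $\lambda$ the centralizer of $D(\lambda)$ in $gl(3,\C)$ is spanned by $\{I, D(\lambda), D(\lambda)^2\}$, so its intersection with the trace-free part $sl(3,\C)$ is the two-dimensional space $\mathrm{span}\{D(\lambda), L_0(\lambda)\}$. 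On a neighborhood of $(0,0)$ avoiding the finite exceptional set in $\lambda$, we may therefore write uniquely
$$
\mathcal{E}(y,\lambda) = \beta_1(y,\lambda)\, D(\lambda) + \beta_2(y,\lambda)\, L_0(\lambda),
$$
and the coefficient functions $\beta_1,\beta_2$ inherit the regularity of $\mathcal{E}$ in $(y,\lambda)$.

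The one step requiring care is the implication $[E,D]=0 \Rightarrow [\mathcal{E},D]=0$: it really uses that $\mathcal{E}$ is small enough that $\exp(\mathrm{ad}_{\mathcal{E}})$ fixes $D$ only when $\mathrm{ad}_{\mathcal{E}}$ annihilates $D$, and this is exactly what forces the corollary to be formulated only near $(y,\lambda) = (0,0)$. The finitely many exceptional values of $\lambda$ at which $D(\lambda)$ fails to be regular semi-simple pose no real obstacle, since we only need a neighborhood of $\lambda = 0$; to extend the decomposition to any non-exceptional $\lambda$ the same argument may be repeated on a neighborhood of that point.
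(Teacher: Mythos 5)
Your proposal is correct and follows essentially the same route as the paper: take the logarithm $\mathcal{E}$ of $E=Q^{-1}U_+$ near $(y,\lambda)=(0,0)$, show $[\mathcal{E},D]=0$, and invoke the fact that for regular semisimple $D$ the trace-free part of its centralizer is spanned by $D$ and $L_0$. The only cosmetic difference is that the commutation step $[E,D]=0\Rightarrow[\mathcal{E},D]=0$ can be seen even more directly by noting that $\log E$ is a convergent power series in $E-I$, but your local-diffeomorphism argument is equally valid.
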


With this description of $U_+$  equation \eqref{eq:Equ2} leads to the following two equations:
\begin{eqnarray*}
&-\beta_1^{\prime}\lambda \bar{a} +\beta_2^{\prime}\lambda^{-2}ab=-\frac{2i\lambda e^u}{a}\frac{\check{p}}{\check{c}},\label{eq:31}\\
&\beta_1^{\prime} \lambda^{-1}a+\beta_2^{\prime}\lambda^2 \bar{a}\bar{b}=-\frac{2i\lambda e^u}{a}\frac{\check{q}}{\check{c}}.\label{eq:32}
\end{eqnarray*}

Integrating then yields
\begin{equation}\label{eq:equiv_beta}
\begin{split}
\beta_1(y)&= \int_0^y \frac{2i\lambda^3\bar{\psi}- i u^{\prime} e^u}{\lambda^3\bar{\psi}-\lambda^{-3}\psi-e^u u^{\prime}}ds, \\
\beta_2(y) &= \int_0^y \frac{2e^u}{\lambda^3\bar{\psi}-\lambda^{-3}\psi-e^u u^{\prime}}ds. 
\end{split}
\end{equation}

Putting everything together we obtain

\begin{theorem} [Explicit Iwasawa decomposition]
The extended frame for the translationally equivariant minimal Lagrangian surface in $\mathbb{C}P^2$ generated by the  potential $D(\lambda)dz$  with vanishing diagonal satisfying $ab \neq 0$ is given by
\begin{equation}\label{eq:extended frame F}
\mathbb{F}(z,\lambda)=\exp(zD-\beta_1(y,\lambda) D-\beta_2(y,\lambda) L_0) Q^{-1}(y,\lambda),
\end{equation}
with $\beta_1, \beta_2$ as in \eqref{eq:equiv_beta} and $Q=Q_0\tilde{Q}$ as in \eqref{eq:Q_0}, \eqref{eq:checkQ}, \eqref{eq:pqstvc} and $u$ as in  \eqref{eq:equiv_u0}.
\end{theorem}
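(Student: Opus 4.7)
The plan is to assemble the explicit pieces constructed in the preceding subsections and to combine them using the commutativity of $D$ and $L_0$. First, by \eqref{eq:FU} and \eqref{eq:F(iy)},
\[
\mathbb{F}(z,\lambda)=e^{xD(\lambda)}U(y,\lambda)=e^{xD}\,e^{iyD}\,U_+(y,\lambda)^{-1}=e^{zD(\lambda)}\,U_+(y,\lambda)^{-1},
\]
so it is enough to produce an explicit formula for $U_+$. From the Corollary just above,
$U_+(y,\lambda)=Q(y,\lambda)\exp\!\bigl(\beta_1(y,\lambda)D+\beta_2(y,\lambda)L_0\bigr)$,
with $Q=Q_0\tilde Q$ built from \eqref{eq:Q_0}--\eqref{eq:pqstvc}. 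Since $L_0=D^2-\tfrac13\mathrm{tr}(D^2)\,I$ commutes with $D$, the two exponentials appearing in $e^{zD}\exp(-\beta_1D-\beta_2L_0)$ coalesce into the single exponential $\exp(zD-\beta_1D-\beta_2L_0)$, yielding precisely \eqref{eq:extended frame F}.

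Next, the formulas \eqref{eq:equiv_beta} are forced by equation \eqref{eq:Equ2}. Substituting $U_+=Q\exp(\beta_1D+\beta_2L_0)$ and using $[D,L_0]=0$ to pull the exponential factor past $Q$ reduces \eqref{eq:Equ2} to a $2\times 2$ linear system for $\beta_1'(y,\lambda),\beta_2'(y,\lambda)$, namely the two equations displayed just before \eqref{eq:equiv_beta}. The determinant of this system simplifies to $i(\lambda^3\bar\psi-\lambda^{-3}\psi)$; Cramer's rule, combined with the explicit values of $\check p,\check q,\check c$ from \eqref{eq:pqstvc} and the identities $a=ie^{u(0)/2}$, $b=-i\psi e^{-u(0)}$, $|a|^2=e^{u(0)}$, reduces the two solutions exactly to the integrands displayed in \eqref{eq:equiv_beta}. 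Integrating from $0$ to $y$, and observing that $Q(0,\lambda)=I$ together with the regularity of $D(\lambda)$ for generic $\lambda$ (from Lemma \ref{D-roots}, which forces $D$ and $L_0$ to be linearly independent) gives $\beta_j(0,\lambda)=0$, yields the claimed formulas.

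The main obstacle is purely algebraic bookkeeping: tracking the powers of $\lambda$ and the conjugation by $Q_0=\mathrm{diag}(ia^{-1}e^{u/2},-iae^{-u/2},1)$ through the Cramer-reduction step, and verifying that the seemingly intricate combinations of $\check p,\check q,\check c$ really do collapse onto the compact expressions $\tfrac{2i\lambda^3\bar\psi-iu'e^u}{\lambda^3\bar\psi-\lambda^{-3}\psi-e^u u'}$ and $\tfrac{2e^u}{\lambda^3\bar\psi-\lambda^{-3}\psi-e^u u'}$. This step is routine but deserves careful execution given the number of constants in play; no conceptual novelty beyond the commutant description of the preceding Lemma and Corollary is required.
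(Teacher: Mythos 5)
Your proposal is correct and follows essentially the same route as the paper: write $\mathbb{F}=e^{zD}U_+^{-1}$ via \eqref{eq:F(iy)} and \eqref{eq:FU}, use the commutant description $U_+=Q\exp(\beta_1D+\beta_2L_0)$ from the preceding Corollary, merge the exponentials using $[D,L_0]=0$, and pin down $\beta_1,\beta_2$ by solving the $2\times2$ system coming from \eqref{eq:Equ2} (whose determinant is indeed $i(\lambda^3\bar\psi-\lambda^{-3}\psi)$ after substituting $\psi=-ia^2b$). The paper presents exactly this assembly, leaving the same algebraic bookkeeping implicit.
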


\begin{remark}
In the proof of the last theorem we have derived the equation
 $U_{+}=Q\exp(\beta_1 D+\beta_2 L_0).$
In this equation each separate term is only defined for small $\lambda$ and a restricted set of $y'$s. However, due to the globality and the uniqueness of the Iwasawa splitting, the matrix $U_+$ is defined for all $\lambda$ in $\C^*$ and all $z \in \C$.
\end{remark}

%%%%%%%%%%%%%%%%%%%%%%%
\subsection{Explicit expressions for minimal Lagrangian immersions}
%%%%%%%%%%%%%%%%%%%%%%

To make formula \eqref{eq:extended frame F} explicit we need to know how 
the exponential factor acts on $Q^{-1}e_3$. This can be done in two ways: 
Since the exponential factor commutes with $D$, one can express it in terms of a linear combination of the matrices $I,D,D^2$. Once the coefficients are known, the horizontal lift $F$ is given explicitly.
The second way is to diagonalize $D$ and to expand $Q^{-1}e_3$ relative to
an eigenvector basis of $D$. It turns out that this second approach can be carried out quite easily and yields a straightforward comparison with the work of Castro-Urbano (\cite{CU94}) which we will discuss in the next section.
We would like to point out that in these computations we ignore any \lq\lq twisting\rq\rq.

We start by computing an eigenvector basis for $D$. Let $\mu$ be an eigenvalue of $D$. Then by \eqref{eq:charpol} and \eqref{D-eig} it is easy to verify that the vector 

\begin{equation*} \label{eigenvector for $D$}
s_\mu = 
\begin{pmatrix}
|a|^2 + \mu^2\\
\lambda^{-1}b \mu + \lambda^2 \bar{a}^2\\ 
\lambda^{-2} ab - \lambda \bar{a} \mu
\end{pmatrix}
\end{equation*}
is an eigenvector for $D$ for the eigenvalue $\mu$.
We know from Lemma \ref{D-roots} that for any non-flat minimal Lagrangian surface, up to possibly six values of $\lambda$ the matrix $D$ has three different nonzero eigenvalues. Since $D$ is skew-Hermitian, we also know that the corresponding eigenvectors are automatically perpendicular. Therefore there exists a unitary matrix $L$ such that 
$D=L\mathrm{diag}(\mu_1,\mu_2,\mu_3) L^{-1}$, where, as before, $\mu_j \, (j=1,2,3)$ denote eigenvalues of $D$. As a consequence, for the extended horizontal lift $F$ we thus obtain
$$ F = \F e_3=L\exp(z\Lambda-\beta_1\Lambda-\beta_2(\Lambda^2-\frac{\mathrm{tr}\Lambda^2}{3}I))L^{-1}Q^{-1}e_3,$$
where $\Lambda=\mathrm{diag}(\mu_1,\mu_2,\mu_3)$. 

From \eqref{eq:Q_0}, \eqref{eq:checkQ} and \eqref{eq:pqstvc}, it is easy to derive

$$Q^{-1}e_3=
\frac{1}{\kappa}
\begin{pmatrix}2i\lambda^{-1}a(|a|^2-e^u)\\2i \lambda\bar{a}(|a|^2-e^u)\\ \lambda^3\bar{\psi}-\lambda^{-3}\psi\end{pmatrix},$$
where $ \kappa = (\lambda^3\bar{\psi}-\lambda^{-3}\psi-e^u u^{\prime})^{1/3}(\lambda^3\bar{\psi}-\lambda^{-3}\psi)^{2/3}$.
Since we will eventually project to $\C P^2$, the factor $\kappa$ is actually irrelevant.  

Setting 
$l_j = \frac{s_j}{||s_j||}$, where we put $s_\mu = s_j$ if $ \mu = \mu_j$, we obtain
$$ L= ( l_1,l_2,l_3) \hspace{2mm} \mbox {and} \hspace{2mm} L^{-1} = \bar{L}^t. $$ 

Altogether we have shown

\begin{theorem} Every translationally equivariant minimal Lagrangian immersion generated by the potential $D(\lambda) dz$ has a canonical horizontal lift $F = F(z,\lambda )$ of the form
\begin{equation*}
F(z,\lambda)= 
e^{g_1( z,\lambda)} \langle l_1, Q^{-1}e_3\rangle l_1 +
e^{g_2(z,\lambda)} \langle l_2, Q^{-1}e_3\rangle l_2 + e^{g_3(z,\lambda)} \langle l_3, Q^{-1}e_3 \rangle l_3,
\end{equation*}
where 
\begin{equation*}
g_j (z,\lambda) = z\mu_j(\lambda) - \beta_1(y, \lambda) \mu_j(\lambda) - \beta_2(y,\lambda)(\mu_j(\lambda)^2 - \frac{1}{3}( \mu_1^2 + \mu_2^2 + \mu_3^2) ).
\end{equation*}
\end{theorem}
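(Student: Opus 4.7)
My plan is to apply the formula for the extended frame established in the previous theorem, namely
$$\mathbb{F}(z,\lambda) = \exp\bigl(zD - \beta_1(y,\lambda) D - \beta_2(y,\lambda) L_0\bigr)\,Q^{-1}(y,\lambda),$$
to the vector $e_3$, since the canonical horizontal lift is $F = \mathbb{F}e_3$. The whole task then reduces to understanding how the matrix exponential on the left acts on the column vector $Q^{-1}e_3$, and the natural tool is to diagonalise $D(\lambda)$ and expand $Q^{-1}e_3$ in an eigenbasis.

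The first step is to assemble the eigendata. By Lemma \ref{D-roots}, for all but at most six values of $\lambda\in S^1$ the matrix $D(\lambda)$ is regular semi-simple with three distinct purely imaginary eigenvalues $\mu_1,\mu_2,\mu_3$. Since $D \in su(3)$, the associated unit eigenvectors $l_1,l_2,l_3$ are pairwise orthogonal for the Hermitian inner product, so $L = (l_1,l_2,l_3)$ is unitary with $L^{-1} = \bar L^t$. Setting $\Lambda = \mathrm{diag}(\mu_1,\mu_2,\mu_3)$ gives $D = L\Lambda L^{-1}$, and therefore
$$L_0 = D^2 - \tfrac{1}{3}\mathrm{tr}(D^2) I = L\bigl(\Lambda^2 - \tfrac{1}{3}\mathrm{tr}(\Lambda^2) I\bigr)L^{-1}.$$
Because $D$ and $L_0$ are simultaneously diagonalised by $L$, the exponent rewrites as
$$zD - \beta_1 D - \beta_2 L_0 = L\cdot\mathrm{diag}(g_1,g_2,g_3)\cdot L^{-1},$$
with the $g_j(z,\lambda)$ exactly the functions displayed in the statement (using $\mathrm{tr}(\Lambda^2) = \mu_1^2+\mu_2^2+\mu_3^2$). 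Exponentiating is then immediate:
$$\exp\bigl(zD - \beta_1 D - \beta_2 L_0\bigr) = L\cdot\mathrm{diag}(e^{g_1},e^{g_2},e^{g_3})\cdot L^{-1}.$$

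The second step is to expand $v := Q^{-1}e_3$ in the orthonormal eigenbasis $\{l_1,l_2,l_3\}$; the expansion coefficients are the Hermitian inner products $\langle l_j,v\rangle$. Applying the diagonalised exponential term by term and then multiplying by $L$ on the left produces the formula claimed in the statement. The one subtle point is the finite set of exceptional values of $\lambda$ at which $D(\lambda)$ fails to be regular semi-simple; at those points the diagonalisation breaks down, but the conclusion extends by continuity in $\lambda$ since $\mathbb{F}(z,\lambda)$ is defined on all of $\C^*$ by the globality of the Iwasawa decomposition discussed in the remark preceding the theorem. Apart from keeping track of the inner-product convention so that the coefficients appear as $\langle l_j, Q^{-1}e_3\rangle$ rather than their complex conjugates, the whole argument is purely linear-algebraic and the main obstacle is merely organisational rather than conceptual.
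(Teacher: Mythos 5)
Your proposal is correct and follows essentially the same route as the paper: the authors likewise apply the explicit Iwasawa decomposition formula $\mathbb{F}=\exp(zD-\beta_1 D-\beta_2 L_0)Q^{-1}$ to $e_3$, diagonalise $D=L\Lambda L^{-1}$ with the unitary matrix $L=(l_1,l_2,l_3)$ of normalised eigenvectors (orthogonality coming from $D$ being skew-Hermitian, distinctness of eigenvalues from Lemma \ref{D-roots}), and read off the coefficients of $Q^{-1}e_3$ in that eigenbasis. Your additional remarks on simultaneous diagonalisation of $L_0$ and on handling the finitely many exceptional values of $\lambda$ by continuity are consistent with, and slightly more explicit than, the paper's treatment.
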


%%%%%%%%%%%%%%
\section{Equivariant cylinders and tori}
\label{Sec:equiv cylinder tori}
%%%%%%%%%%%%%%

%%%%%%%%%%%%%%%%%%%%%%%
\subsection{Translationally equivariant minimal Lagrangian cylinders}
%%%%%%%%%%%%%%%%%%%%%%%

Based on the description of the frames of (real) translationally equivariant minimal Lagrangian surfaces,
in this  section we will investigate for which (generally complex) periods such an immersion is periodic.
\begin{definition}
Let $f:\mathbb{D} \rightarrow \C P^2$ be a (relative to translations by real numbers)  translationally equivariant minimal Lagrangian surface. 
Then $f$ is called an \emph{equivariant cylinder}, if there exists some complex number $\omega$ such that $f(z + \omega) = f(z)$ for all $z \in \mathbb{D} $.
In this case, $\omega$ is called a \emph{period of $f$}.
If $f$ satisfies this equation for two (over $\R$)  linearly independent 
periods, then $f$ will be called an \emph{equivariant torus}.
\end{definition}

Clearly, every period $\omega$ of some translationally equivariant minimal Lagrangian immersion 
leaves the metric invariant. Since the metric is periodic with (smallest) period $2T$, it follows that the imaginary part of 
$\omega$ is an integer multiple of  $2T$.
Hence we will only consider translations of the form
$$z\mapsto z+p+m2Ti, \quad \text{ with } p \in \mathbb{R}, m\in \mathbb{Z}.$$

From \eqref{eq:pqstvc}  we derive by inspection that $Q$ is invariant under the above translation by $p+m2Ti$.
Therefore, in view of formula  \eqref{eq:extended frame F} for the extended frame we obtain that the monodromy matrix is determined completely by its exponential factor. 

From the properties of $u$ we derive the following properties of $\beta_1$
and $\beta_2$:
\begin{lemma}\label{lem:beta}
\begin{enumerate}
\item  $\beta_j(y+m2T,\lambda)=\beta_j(y,\lambda)+m\beta_j(2T,\lambda)$ for $m\in \mathbb{Z}$ and $j=1,2$.

\item $\beta_1(2T,\lambda)-\overline{\beta_1(2T,\lambda)}=4iT$, $\beta_2(2T,\lambda)+\overline{\beta_2(2T,\lambda)}=0$.

\item $\mathrm{Re}\beta_1(2T,\epsilon\lambda)=\mathrm{Re}\beta_1(2T,\lambda)$,
$\mathrm{Im}\beta_2(2T,\epsilon\lambda)=-\mathrm{Im}\beta_2(2T,\lambda)$,
where $\epsilon=e^{\pi i/3}$ is a sixth root of unity as in the definition of the twisted loop group.

\end{enumerate}
\end{lemma}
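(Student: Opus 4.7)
The plan is to split the integrands defining $\beta_1$ and $\beta_2$ into real and imaginary parts explicitly, and then to exploit two features of the metric factor $u(y)$ from \eqref{eq:equiv_u0}: that it is $2T$-periodic and that it is even about $y=0$ (hence about every integer multiple of $T$). Since $u'$ is then odd about $0$ and about $T$, the combinations $u'e^u$ and $(u')^2e^{2u}$ have definite parities, and most of the bookkeeping collapses.

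For claim (1), the argument is immediate: the integrands of $\beta_1,\beta_2$ are functions of $u(s)$ and $u'(s)$ only, hence $2T$-periodic in $s$. Writing $\int_0^{y+2mT} = \int_0^y + \int_y^{y+2mT}$ and using periodicity (together with $\int_0^{2mT} = m\int_0^{2T}$) gives the claimed additive cocycle identity.

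For claim (2), I would write $\lambda^3\bar\psi = P+iQ$ (so $\lambda^{-3}\psi = P-iQ$) and set $c:=u'e^u$, so that the common denominator $\lambda^3\bar\psi-\lambda^{-3}\psi-e^u u'$ of the two integrands becomes $-c+2iQ$. Rationalizing by $-c-2iQ$ separates real and imaginary parts and produces, in particular,
\begin{equation*}
\mathrm{Im}\,\beta_1(y,\lambda) = y - \int_0^y \frac{2Pc}{c^2+4Q^2}\,ds, \qquad
\mathrm{Re}\,\beta_2(y,\lambda) = -\int_0^y \frac{2c\,e^u}{c^2+4Q^2}\,ds.
\end{equation*}
Both integrands are odd-in-$c$ ratios with even denominators; since $u'(2T-s)=-u'(s)$ while $u(2T-s)=u(s)$, each integrand is odd about $s=T$ and so vanishes when integrated over $[0,2T]$. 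This yields $\mathrm{Im}\,\beta_1(2T,\lambda)=2T$ and $\mathrm{Re}\,\beta_2(2T,\lambda)=0$, which are the two identities of (2).

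For claim (3), the same rationalization produces
\begin{equation*}
\mathrm{Re}\,\beta_1(y,\lambda) = \int_0^y \frac{4PQ}{c^2+4Q^2}\,ds, \qquad
\mathrm{Im}\,\beta_2(y,\lambda) = -\int_0^y \frac{4Q\,e^u}{c^2+4Q^2}\,ds.
\end{equation*}
Because $\epsilon^3=-1$, the substitution $\lambda\mapsto\epsilon\lambda$ sends $\lambda^3\bar\psi\mapsto-\lambda^3\bar\psi$, i.e.\ $(P,Q)\mapsto(-P,-Q)$. The quantities $PQ$ and $Q^2$ are then invariant, giving $\mathrm{Re}\,\beta_1(y,\epsilon\lambda)=\mathrm{Re}\,\beta_1(y,\lambda)$ for every $y$; the lone $Q$ in the numerator of $\mathrm{Im}\,\beta_2$ flips sign, yielding $\mathrm{Im}\,\beta_2(y,\epsilon\lambda)=-\mathrm{Im}\,\beta_2(y,\lambda)$. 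Specializing to $y=2T$ gives (3). The main obstacle is purely algebraic bookkeeping: one must carry the rationalization through carefully enough to separate the \emph{even}-in-$c$ pieces (which produce the surviving $2T$ and the nonzero quantities in (3)) from the \emph{odd}-in-$c$ pieces (which disappear at $y=2T$ by the reflection symmetry of $u'$). Beyond this, no loop-group or PDE input is required.
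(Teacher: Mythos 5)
Your proof is correct and follows exactly the route the paper intends (the lemma is stated there without proof, as a consequence of ``the properties of $u$''): splitting the integrands of \eqref{eq:equiv_beta} into real and imaginary parts and using the $2T$-periodicity and evenness of $u$ (so $u'(2T-s)=-u'(s)$) for (1) and (2), and $\epsilon^3=-1$ for (3). The explicit expressions you obtain for $\mathrm{Re}\,\beta_1$, $\mathrm{Im}\,\beta_1$, $\mathrm{Re}\,\beta_2$, $\mathrm{Im}\,\beta_2$ check out against \eqref{eq:equiv_beta}, so no gap remains.
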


As a consequence,  the monodromy matrix 
of the extended frame $\F(z, \lambda)$ for the translation by $\omega = p +  m2Ti $ is
\begin{equation*}
\mathbb{F}(z+p+m2Ti,\lambda)=M(\lambda)\F(z, \lambda),
\end{equation*}
where
\begin{equation*}
M(\lambda)=\exp(pD(\lambda)-m \mathrm{Re}\beta_1(2T,\lambda) D(\lambda)-im\mathrm{Im}\beta_2(2T,\lambda)L_0(\lambda)).
\end{equation*}
Moreover, $M(\lambda)\in\Lambda SU(3)_\sigma $ for any $\lambda \in S^1$.

Thus every translation $\omega=p+m2Ti$, $p\in \R$, $m\in \Z$, induces a symmetry of the translationally equivariant minimal Lagrangian surface constructed from $D(\lambda)$.

Let $id_1(\lambda), id_2(\lambda), id_3(\lambda)$ denote the eigenvalues of $D$. Recalling $\beta$ from \eqref{D-eig}, we see that the monodromy $M(\lambda)$ of the translation $\omega = p + m2Ti$ has the eigenvalues
\begin{eqnarray} \label{eig-mono}
i \lbrace pd_j(\lambda) -m[\mathrm{Re}\beta_1(2T,\lambda) d_j(\lambda)+\mathrm{Im}\beta_2(2T,\lambda)(-d_j(\lambda)^2+\frac{2\beta}{3})] \rbrace
\end{eqnarray}
for $j=1,2,3$.

As a consequence it is easy to obtain
\begin{theorem} \label{equicyl}
For $\lambda = \lambda_0$ the following statements are equivalent.

\begin{enumerate}
\item  The minimal Lagrangian immersion $f(z, \lambda_0)$ is
an equivariant minimal Lagrangian cylinder relative to translation by $\omega =p + m2Ti.$

\item  The monodromy matrix $M(\lambda)$ of the translation by $\omega=p + m2Ti$ satisfies for $\lambda = \lambda_0$ the equation
$M(\lambda_0) = I$.

\item  For the eigenvalues of the monodromy matrix $M(\lambda)$
of the translation by $\omega=p + m2Ti$, the 
following relation holds for 
$j = 1,2$  and $\lambda = \lambda_0$ and integers $l_1, l_2$
\begin{eqnarray} \label{mLi-cyl3}
pd_j(\lambda_0)-m[\mathrm{Re}\beta_1(2T,\lambda_0) d_j(\lambda_0)+\mathrm{Im}\beta_2(2T, \lambda_0) (-d_j(\lambda)^2+\frac{2\beta}{3})]=2l_j \pi.
\end{eqnarray}

\item  In addition we note:
 If $d_1\neq d_2$ and $\lambda = \lambda_0,$ the following relations,
for appropriate  integers $ l_1$ and $l_2$, are equivalent with the relations above
\begin{equation} \label{periodicity-cyl4}
\begin{split}
&m \mathrm{Im}\beta_2(2T, \lambda_0)[d_1(\lambda_0) - d_2(\lambda_0)]
\lbrace d_1(\lambda_0)d_2(\lambda_0)+\frac{2\beta}{3}\rbrace
=2\pi (l_1 d_2(\lambda_0) -l_2 d_1(\lambda_0)),\\
&(d_1-d_2)\{p-m  \mathrm{Re}\beta_1(2T, \lambda_0) +m \mathrm{Im}\beta_2(2T, \lambda_0)(d_1(\lambda_0)+d_2(\lambda_0))\}
=2(l_1-l_2)\pi.
\end{split}
\end{equation}

\end{enumerate}

\end{theorem}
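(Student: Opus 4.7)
The plan is to establish the chain of equivalences $(1) \Leftrightarrow (2) \Leftrightarrow (3) \Leftrightarrow (4)$. The three links are, respectively, a geometric reduction of the closing condition, a spectral computation on the monodromy, and a piece of routine linear algebra. The spectral step is the substance of the proof, while the geometric step harbors the only real subtlety.

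For $(1) \Leftrightarrow (2)$, I would use the already derived relation $\mathbb{F}(z + \omega, \lambda) = M(\lambda) \mathbb{F}(z, \lambda)$ for $\omega = p + 2Tmi$. Projecting by $[\,\cdot\, e_3]$, the cylinder condition $f(z + \omega, \lambda_0) = f(z, \lambda_0)$ for all $z$ is equivalent to $M(\lambda_0)$ fixing every line $\mathbb{F}(z, \lambda_0) e_3 \subset \mathbb{C}^3$. For a non-flat minimal Lagrangian immersion the image is a genuine $2$-parameter family of lines, and it is not contained in any $\mathbb{C}P^1 \subset \mathbb{C}P^2$ because a projective line is complex and therefore carries no $2$-dimensional Lagrangian submanifold. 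Since the fixed locus of an element of $SU(3)$ is a finite union of linear subspaces, a fixed locus containing a $2$-dimensional submanifold not confined to a $\mathbb{C}P^1$ forces $M(\lambda_0)$ to be central; the normalization $\mathbb{F}(0, \lambda_0) = I$ together with the $\sigma$-twisting pin the central element to the identity. The reverse implication is immediate.

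For $(2) \Leftrightarrow (3)$, I would diagonalize $M(\lambda_0) = \exp X$ with $X = (p - m \mathrm{Re}\beta_1) D(\lambda_0) - i m \mathrm{Im}\beta_2 L_0(\lambda_0)$. Since $L_0 = D^2 - \tfrac{1}{3} \mathrm{tr}(D^2) I$ is a polynomial in $D$, the summands commute and are simultaneously diagonalizable. By Lemma \ref{D-roots} and the non-flatness hypothesis, $D(\lambda_0)$ has three distinct purely imaginary eigenvalues $i d_j(\lambda_0)$; Vieta applied to \eqref{eq:charpol} together with $\mathrm{tr}(D) = 0$ gives $\sum d_j = 0$ and $\sum d_j^2 = 2\beta$, so $L_0(\lambda_0)$ has eigenvalues $-d_j^2 + \tfrac{2\beta}{3}$. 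The eigenvalues of $X$ are then precisely $i$ times the quantities in \eqref{eig-mono}. Because $D$ is regular semisimple, $\exp X = I$ is equivalent to each eigenvalue of $X$ lying in $2\pi i \mathbb{Z}$, and since $\sum d_j = 0 = \sum(-d_j^2 + \tfrac{2\beta}{3})$ the three exponents sum to zero, so only the two conditions for $j = 1, 2$ are independent; the third is automatic with $l_3 = -l_1 - l_2$. This is exactly \eqref{mLi-cyl3}.

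For $(3) \Leftrightarrow (4)$, I would view the equations of (3) as a linear system in the unknowns $P := p - m \mathrm{Re}\beta_1(2T, \lambda_0)$ and $Q := m \mathrm{Im}\beta_2(2T, \lambda_0)$ with a Vandermonde-like coefficient matrix that is invertible precisely when $d_1 \neq d_2$. Subtracting the $j = 1$ and $j = 2$ equations isolates $P + Q(d_1 + d_2)$ and produces the second equation of (4); multiplying by $d_2$ and $d_1$ respectively and then subtracting eliminates $P$ and produces the first. The manipulation is reversible, so the integer tuples $(l_1, l_2)$ correspond bijectively on both sides. The main obstacle is the subtlety in $(1) \Leftrightarrow (2)$, namely excluding the non-identity center elements $\alpha I$, $\alpha^2 I$ of $SU(3)$ as possible values of $M(\lambda_0)$; the normalization $\mathbb{F}(0, \lambda) = I$ and the $\sigma$-twisting of $M(\lambda) \in \Lambda SU(3)_\sigma$ are what close this gap, while all remaining work reduces to standard eigenvalue manipulations.
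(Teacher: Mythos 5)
Your overall structure coincides with the paper's (largely implicit) argument: the theorem is stated immediately after the computation of the monodromy $M(\lambda)=\exp(pD-m\,\mathrm{Re}\beta_1 D-im\,\mathrm{Im}\beta_2 L_0)$ and of its eigenvalues \eqref{eig-mono}, and the intended proof is exactly your spectral step plus the elementary linear algebra for $(3)\Leftrightarrow(4)$. Those parts of your proposal are correct: $X=(p-m\,\mathrm{Re}\beta_1)D-im\,\mathrm{Im}\beta_2 L_0$ is skew-Hermitian and a polynomial in $D$, its eigenvalues are $i$ times the bracketed quantities in \eqref{eig-mono}, these sum to zero (which is why only $j=1,2$ appear in \eqref{mLi-cyl3}), and the passage to \eqref{periodicity-cyl4} is an invertible linear substitution precisely when $d_1\neq d_2$.

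The one step that does not hold up is your disposal of the center in $(1)\Rightarrow(2)$. The reduction to ``$M(\lambda_0)$ is central'' is fine; in fact it can be had more directly, since periodicity of $f$ forces $F(z+\omega)=cF(z)$ for a constant $c\in S^1$, so $F(z)$ lies in the $c$-eigenspace of $M(\lambda_0)$ for all $z$, and $F,F_z,F_{\bar z}$ span $\C^3$. But the claim that the $\sigma$-twisting together with $\F(0,\lambda)=I$ then pins $M(\lambda_0)$ to $I$ is not justified: the twisting condition reads $M(\epsilon\lambda)=\sigma(M(\lambda))$ and only relates values of the loop at \emph{different} points of $S^1$, so it imposes no restriction at a single fixed $\lambda_0$; moreover $\alpha I$ and $\alpha^2 I$ (with $\alpha=e^{2\pi i/3}$) lie in $SU(3)$ and are compatible with the trace-zero constraint on the exponent (take $l_1+l_2+l_3=-1$). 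Hence $(1)$ is a priori only equivalent to $M(\lambda_0)\in\{I,\alpha I,\alpha^2 I\}$, i.e.\ to \eqref{mLi-cyl3} with $2l_j\pi$ replaced by $\tfrac{2k\pi}{3}+2l_j\pi$ for a common $k\in\{0,1,2\}$; statements $(2)$ and $(3)$ as written characterize the periods for which the horizontal lift $F$ closes, and an arbitrary period of $f$ is only guaranteed to satisfy them after being tripled. The paper silently makes the same identification of periodicity of $f$ with periodicity of $F$, so your argument is no less rigorous than the source, but the twisting argument you offer to close that gap is the one genuinely broken link and should either be removed or replaced by an explicit convention (or a proof that the central values cannot occur).
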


There are two particularly simple choices of translations $\omega =p + m2Ti$, namely purely real and purely imaginary translations. Consequently we obtain:

\begin{corollary} \label{specialperiods}
Retaining the assumptions and the notation of  
Theorem \ref{equicyl}  for the translation $\omega =p + m2Ti$ and the fixed value $\lambda = \lambda_0$, we obtain two natural cases:
\begin{enumerate}

\item{Real translations: If $m = 0$, then $f(z,\lambda_0)$ is an equivariant cylinder if and only if $d_1(\lambda_0)/d_2(\lambda_0)$ is rational.}

\item{Purely imaginary translations: If $p = 0$, then $f(z,\lambda_0)$ is an equivariant cylinder if and only if 
\begin{equation*} \label{imag-transl}
\mathrm{Re}\beta_1(2T,\lambda_0) d_j(\lambda_0)+\mathrm{Im}\beta_2(2T,\lambda_0)(-d_j(\lambda_0)^2+\frac{2\beta}{3}) = 2 \pi r_j,
\end{equation*}
where $r_j$ $(j = 1,2)$ are rational numbers.}
\end{enumerate}

\end{corollary}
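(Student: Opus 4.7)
The plan is to deduce both parts of Corollary \ref{specialperiods} directly from Theorem \ref{equicyl}(3) by specializing the parameters $p$ and $m$ and then rephrasing the resulting conditions in terms of rationality.

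For part (1) (real translations, $m=0$), I would substitute $m=0$ into \eqref{mLi-cyl3}, which collapses to
\[
p\, d_j(\lambda_0) = 2 l_j \pi, \qquad j = 1,2,
\]
for some integers $l_1, l_2$. To have a genuine period we need $p \neq 0$. By Lemma \ref{D-roots}, away from the finitely many exceptional values of $\lambda$ all $d_j(\lambda_0)$ are nonzero, so $p \neq 0$ forces $l_j \neq 0$, and dividing the two relations gives $d_1(\lambda_0)/d_2(\lambda_0) = l_1/l_2 \in \Q$. Conversely, if $d_1(\lambda_0)/d_2(\lambda_0) = n_1/n_2$ with $n_1, n_2 \in \Z \setminus \{0\}$, then $p := 2 \pi n_2 / d_1(\lambda_0) = 2 \pi n_1 / d_2(\lambda_0)$ is a nontrivial real period, and Theorem \ref{equicyl}(3) is satisfied.

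For part (2) (purely imaginary translations, $p = 0$, $m \neq 0$), substituting $p = 0$ into \eqref{mLi-cyl3} yields
\[
-m\bigl[\mathrm{Re}\,\beta_1(2T,\lambda_0)\, d_j(\lambda_0) + \mathrm{Im}\,\beta_2(2T,\lambda_0)\bigl(-d_j(\lambda_0)^2 + \tfrac{2\beta}{3}\bigr)\bigr] = 2 l_j \pi,
\]
for $j = 1,2$ and integers $l_j$. Dividing by $-m$ and setting $r_j := -l_j/m \in \Q$ gives the displayed equation of the corollary. Conversely, given rationals $r_1, r_2$ satisfying the equation, I would let $m$ be any common multiple of their denominators, so that $l_j := -m\, r_j \in \Z$. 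These integers fulfill the above identity, and by Theorem \ref{equicyl}(3) the monodromy $M(\lambda_0)$ of the translation $\omega = m\cdot 2Ti$ equals $I$, yielding the cylinder.

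The only minor point that needs attention is that Theorem \ref{equicyl}(3) imposes the relation only for $j = 1, 2$ whereas $M(\lambda_0)$ has three eigenvalues. However, since $D(\lambda_0) \in su(3)$ is traceless we have $d_1 + d_2 + d_3 = 0$, and the same identity for $j = 3$ is obtained by summing the $j = 1$ and $j = 2$ relations (with $l_3 = -l_1 - l_2$). Hence no additional constraint arises and I do not expect any substantial obstacle; the argument is a direct specialization, the main care being the standard common-denominator argument used to convert rationality of $r_j$ back to an integer choice of $m$ and $l_j$.
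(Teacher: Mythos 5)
Your argument is correct and is essentially the paper's own (implicit) route: the corollary is stated there as an immediate specialization of Theorem \ref{equicyl}(3) to $m=0$ and to $p=0$, with your observation that the $j=3$ condition is automatic following from $d_1+d_2+d_3=0$ and $d_1^2+d_2^2+d_3^2=2\beta$ (so the third exponent in \eqref{eig-mono} is minus the sum of the first two). The only slip is an index swap in the converse of part (1): with $d_1(\lambda_0)/d_2(\lambda_0)=n_1/n_2$ the period should be $p=2\pi n_1/d_1(\lambda_0)=2\pi n_2/d_2(\lambda_0)$, not $2\pi n_2/d_1(\lambda_0)$; this does not affect the substance of the proof.
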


Examples for the above two cases will be presented  later in sections \ref{subsec: non-real} and \ref{ex:pipsi}.

%%%%%%%%%%%%%%%%%%%%%%%%%%%%%%%
\subsection{Translationally equivariant minimal Lagrangian tori}
\label{subsec: equiv tori}
%%%%%%%%%%%%%%%%%%%%%%%%%%%%%%%

\subsubsection{Basic discussion of possible tori}

By definition, a minimal Lagrangian torus $\mathbb{T}$ is a minimal Lagrangian surface
which admit for some  $\lambda = \lambda_0$ two over $\R$ linearly independent  periods  
$\omega_1 = p_1+ im_12 T$ and $\omega_2 = p_2+ i m_2 2 T,$
with real numbers $p_1,p_2$ and integers $m_1,m_2$.
Hence $\mathbb{T}$ is of the form $\mathbb{T}= \C/\mathcal{L}$, where $\mathcal{L}$ is a rank 2 lattice. 
Then $ \hat{p} = m_2 \omega_1 - m_1 \omega_2 \in \mathcal{L}$  is a real period of $f$. Since $\omega_1$ and $\omega_2$ are linearly independent, it follows that $\hat{p}$ is not $0$, i.e., $\hat{p}$ is a nonzero real period of $f$.
Therefore,  by Corollary \ref{specialperiods} we obtain that
$ r(\lambda_0) = d_1(\lambda_0) / d_2(\lambda_0)$ is a rational number.
Thus every translationally equivariant minimal Lagrangian torus admits a real period and a non-real period.

Next we consider the period lattice $$\mathcal{L}(f) = \lbrace p+m2Ti \in \C; f(z+p+m2Ti) = f(z)
\mbox{ for all } z \in \C\rbrace$$
associated with a translationally equivariant minimal Lagrangian surface $f$.
Note that $\mathcal{L}(f)$ is indeed a lattice. 

For a general minimal Lagrangian surface the period lattice will be empty. For some such surfaces it will be of the form $\omega \mathbb{Z}$.  Our goal in this section is to understand better the case where the period lattice is a lattice of rank 2.
Clearly, if $\mathbb{T} = \C/ \mathcal{L}$ is a  translationally equivariant minimal Lagrangian torus, then $\mathcal{L} \subset \mathcal{L}(f)$ holds
and also $\mathbb{T}(f) = \C/ \mathcal{L}(f) $ is a translationally equivariant minimal Lagrangian torus.

More precisely,

\begin{proposition}
Assume the translationally equivariant minimal Lagrangian surface $f$ defined on $\C$ descends to some torus $\hat{\mathbb{T}}$, then this torus is induced by some sub-lattice $\hat{\mathcal{L}}$ of $\mathcal{L}$ and there exists a covering $\hat{\pi}: \hat{\mathbb{T}} \rightarrow \mathbb{T}$ with fiber $\mathcal{L}/\hat{\mathcal{L}}$. In particular, if $f$ descends to some torus, it can be injective only if the torus is the one defined by the period lattice.
In particular, an embedding of a translationally equivariant minimal Lagrangian torus is only possible, if the torus is defined by the period lattice.
\end{proposition}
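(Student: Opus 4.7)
The plan is to reduce the statement to a straightforward covering-space observation, using only the definition of the full period lattice $\mathcal{L}(f)$ and the fact that any quotient of $\C$ realizing a torus comes from a rank-$2$ lattice. I would first spell out the hypothesis: if $f$ descends to a torus $\hat{\mathbb{T}}$, then (since $f$ is defined on $\C$ and $\hat{\mathbb{T}}$ is a compact quotient $\C/\hat{\mathcal{L}}$ of $\C$ by a discrete group of holomorphic automorphisms of $\C$) the group $\hat{\mathcal{L}}$ must be a rank-$2$ lattice of translations of $\C$.

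The first step I would take is to show $\hat{\mathcal{L}} \subseteq \mathcal{L}(f)$. This is immediate: the descent of $f$ means by definition that $f(z+\omega)=f(z)$ for every $\omega \in \hat{\mathcal{L}}$ and every $z \in \C$, which is exactly the condition $\omega \in \mathcal{L}(f)$. In particular $\hat{\mathcal{L}}$ has the same rank as $\mathcal{L}(f)$, so the quotient $\mathcal{L}(f)/\hat{\mathcal{L}}$ is a finite abelian group. This immediately produces the required covering map $\hat\pi : \hat{\mathbb{T}} = \C/\hat{\mathcal{L}} \to \C/\mathcal{L}(f) = \mathbb{T}(f)$ whose fiber is precisely $\mathcal{L}(f)/\hat{\mathcal{L}}$, by the standard factorization of quotient maps.

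For the injectivity statement, I would argue by contrapositive. Suppose $\hat{\mathcal{L}} \subsetneq \mathcal{L}(f)$ strictly, and pick $\omega \in \mathcal{L}(f) \setminus \hat{\mathcal{L}}$. Then $0$ and $\omega$ represent distinct points in $\hat{\mathbb{T}}$, but $f(\omega)=f(0)$ by the defining property of $\mathcal{L}(f)$; hence the induced map on $\hat{\mathbb{T}}$ is not injective. Conversely, if $\hat{\mathcal{L}}=\mathcal{L}(f)$ then by definition of $\mathcal{L}(f)$ the map on $\hat{\mathbb{T}}(f)$ separates points on $\C$ modulo $\mathcal{L}(f)$, and injectivity follows. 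The embedding statement is then immediate, since an embedding is in particular injective.

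There is really no hard step here; the one thing I would want to be careful about is confirming that $\hat{\mathcal{L}}$ is genuinely a rank-$2$ lattice (so that $\mathcal{L}(f)/\hat{\mathcal{L}}$ is finite and $\hat\pi$ is a genuine finite covering rather than, say, an infinite cyclic one). This is forced by the hypothesis that $\hat{\mathbb{T}}$ is a torus, i.e. a compact quotient, together with the classification of discrete subgroups of $(\C,+)$. Beyond that bookkeeping, the proof is a direct application of the definitions of $\mathcal{L}(f)$ and of the quotient topology.
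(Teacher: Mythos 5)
Your argument for the statement actually asserted in the proposition is correct and is essentially the intended one: the paper offers no separate proof, treating the claim as an immediate consequence of the definition of the period lattice $\mathcal{L}(f)$ together with standard covering-space facts, which is exactly the route you take. The inclusion $\hat{\mathcal{L}}\subseteq\mathcal{L}(f)$, the finiteness of $\mathcal{L}(f)/\hat{\mathcal{L}}$ (both lattices having rank $2$), the covering $\C/\hat{\mathcal{L}}\rightarrow\C/\mathcal{L}(f)$ with fiber $\mathcal{L}(f)/\hat{\mathcal{L}}$, and the non-injectivity when the inclusion is strict (the classes of $0$ and of some $\omega\in\mathcal{L}(f)\setminus\hat{\mathcal{L}}$ are distinct in $\hat{\mathbb{T}}$ but have equal image) are all correct, and your care in checking that $\hat{\mathcal{L}}$ is a genuine rank-$2$ lattice is appropriate.

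One sentence of your write-up is wrong, however: the claimed converse, that $\hat{\mathcal{L}}=\mathcal{L}(f)$ implies injectivity. The definition of $\mathcal{L}(f)$ only records those $\omega$ with $f(z+\omega)=f(z)$ for \emph{all} $z$; it says nothing about self-intersections $f(z_1)=f(z_2)$ at particular pairs of points with $z_1-z_2\notin\mathcal{L}(f)$, which can perfectly well occur. So the induced map on $\C/\mathcal{L}(f)$ need not separate points, and injectivity does not follow ``by definition of $\mathcal{L}(f)$.'' Fortunately the proposition asserts only the necessary condition (``can be injective \emph{only if} the torus is the one defined by the period lattice''), so this overreach does not affect the validity of your proof of the stated result, but that sentence should be deleted or weakened.
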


%%%%%%%%%%%%%%%%%%%%%
\subsubsection{The period lattice}
%%%%%%%%%%%%%%%%%%%%

In the case under consideration it is fortunately possible to give a fairly precise description of the period lattice.

\begin{theorem} \label{periodlattice}
The period lattice $\mathcal{L}(f)$ of any translationally equivariant minimal Lagrangian torus $f$ is of the form
\begin{equation*}
 \mathcal{L}(f) = p_f \mathbb{Z} + \omega_f \mathbb{Z},
\end{equation*}
where $p_f$ is the smallest (real) positive period and $\omega_f$ the period with smallest positive imaginary part.
\end{theorem}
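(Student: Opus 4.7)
My plan is to exploit two facts already established in the excerpt: (a) every period $\omega$ of $f$ has the form $\omega = p + m\, 2Ti$ with $p \in \mathbb{R}$ and $m \in \mathbb{Z}$, because any period must leave the metric factor $e^{u(y)}$ invariant and $u$ has $2T$ as its smallest positive period; and (b) every translationally equivariant minimal Lagrangian torus admits a nonzero real period, as was shown in the paragraph introducing the combination $\hat{p} = m_2\omega_1 - m_1\omega_2$. With these in hand, the statement reduces essentially to the elementary structure theory of discrete subgroups of $\mathbb{C}$.

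The first step is to analyse the real slice $\mathcal{L}(f) \cap \mathbb{R}$. By (b) this subgroup is nontrivial, and since $\mathcal{L}(f)$ is discrete it must be cyclic. Hence there exists a smallest positive real period, call it $p_f$, and $\mathcal{L}(f) \cap \mathbb{R} = p_f \mathbb{Z}$.

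The second step is to organise the remaining periods. Consider the group homomorphism
\[
\pi : \mathcal{L}(f) \longrightarrow 2Ti\,\mathbb{Z}, \qquad p + m\,2Ti \longmapsto m\,2Ti,
\]
which by (a) is well defined. Its kernel is exactly $p_f \mathbb{Z}$. Since $\mathcal{L}(f)$ is assumed to be of rank $2$, the image of $\pi$ is a nonzero subgroup of $2Ti\,\mathbb{Z}$, so it has the form $m_0 \cdot 2Ti\,\mathbb{Z}$ for a unique positive integer $m_0$. Choose any $\omega_f \in \mathcal{L}(f)$ with $\pi(\omega_f) = m_0 \cdot 2Ti$; this $\omega_f$ is, by construction, a period of smallest positive imaginary part.

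For the final step, given an arbitrary period $\omega = p + m\,2Ti \in \mathcal{L}(f)$, the defining property of $m_0$ forces $m = k m_0$ for some $k \in \mathbb{Z}$. Then $\omega - k\omega_f$ lies in $\ker \pi = p_f \mathbb{Z}$, so $\omega - k\omega_f = n p_f$ for some $n \in \mathbb{Z}$, giving $\omega = n p_f + k \omega_f$. Therefore $\mathcal{L}(f) = p_f \mathbb{Z} + \omega_f \mathbb{Z}$, as claimed. I do not foresee any genuine obstacle here: the content of the theorem is really the two ingredients (a) and (b), which are already at our disposal, and the rest is standard lattice bookkeeping. The only small subtlety worth mentioning is that $\omega_f$ is not unique (it is determined only modulo $p_f\mathbb{Z}$), but the theorem only asserts the existence of such a generator, so this ambiguity is immaterial.
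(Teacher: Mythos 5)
Your proposal is correct and follows essentially the same route as the paper: both arguments rest on the two facts that every period has the form $p+m\,2Ti$ and that a torus admits a nonzero real period, and then perform the same Euclidean-division bookkeeping (your projection homomorphism $\pi$ is just a structural rephrasing of the paper's ``subtract an integer multiple of $\omega_f$'' step). No gaps; the observation that $\omega_f$ is only determined modulo $p_f\mathbb{Z}$ is a fair and harmless remark.
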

\begin{proof}
We have seen above that any translationally equivariant minimal Lagrangian torus has a non-zero real period. Let $p_f$ denote the smallest positive real period of $f$. Assume $p$ is any other positive period. Then $0 < p_f < p$.
If $p$ is not an integer multiple of $p_f$, then we can substract an integer multiple from $p$ such that $0 < p - k p_f < p_f$. This is a contradiction.
Let's consider next all non-real periods of $f$ and let's choose any such period $\omega_f = q + m_f i 2T$ for which $m_f$ is positive and minimal.
Now choose any other period $\omega = a + b i2T$, with $a \in \R$ and $b$ an integer. We can assume that $b$ is positive. If $b$ is not an integer multiple of $m_f$, then one can subtract an integer multiple of 
$\omega_f$ from $\omega$   
such that $\omega - k \omega_f = (a-kq)  +(b - km_f)i 2T$  and $0 < b-km_f < m_f$. This is a contradiction. Therefore $b = m m_f$ with an integer $m$.
 
Moreover $\omega - m \omega_f = a - m q$ is a real period. But we have seen above that all real periods are an integer multiple of $p_f$. Hence $a-m q = np_f$ and $\omega = np_f + m\omega_f$ follows.

\end{proof}

Since the two generating periods for the period lattice $\mathcal{L}$ above
are determined by some minimality condition, to find all  translationally equivariant minimal Lagrangian tori it basically suffices to find a real period and a non-real period. The existence of such periods can be rephrased as follows

\begin{theorem}\label{torus condition}
Let $f$ be a  translationally equivariant minimal Lagrangian immersion.
Then $f$ descends to a torus if and only if
\begin{enumerate}
\item[(1)] The eigenvalues  $d_1(\lambda_0)$ and $d_2(\lambda_0)$ have a rational quotient.

\item[(2)] Either the eigenvalues  $d_1(\lambda_0)$ and $d_2(\lambda_0)$ 
equal or\\ 
$\frac{1}{2\pi}\mathrm{Im}\beta_2(2T, \lambda_0)\lbrace d_1(\lambda_0)d_2(\lambda_0)+\frac{2\beta}{3}\rbrace $ is rational.
\end{enumerate}
\end{theorem}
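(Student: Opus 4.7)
The plan is to decouple the torus condition into the existence of a real period and a non-real period, then match these with conditions (1) and (2) respectively. By Theorem \ref{periodlattice}, $f$ descends to a torus if and only if the period lattice $\mathcal{L}(f)$ has rank $2$, which is equivalent to $\mathcal{L}(f)$ simultaneously containing a nonzero real period $p \in \R$ and a period $\omega = p + m\cdot 2Ti$ with $m \neq 0$.

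The real-period half is immediate: by Corollary \ref{specialperiods}(1) a nonzero real period exists if and only if $d_1(\lambda_0)/d_2(\lambda_0)$ is rational, which is exactly condition (1). It therefore remains, under the standing assumption that (1) holds, to show that condition (2) is equivalent to the existence of a non-real period.

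Write $d_j(\lambda_0) = c\, q_j$ with $c \in \R\setminus\{0\}$ and $\gcd(q_1, q_2) = 1$. By Theorem \ref{equicyl}(4), a non-real period corresponds to a tuple $(p, m, l_1, l_2) \in \R \times (\Z \setminus \{0\}) \times \Z^2$ solving \eqref{periodicity-cyl4}. If $d_1(\lambda_0) = d_2(\lambda_0)$, both lines of \eqref{periodicity-cyl4} collapse to $0=0$ (taking $l_1 = l_2$), and the original system \eqref{mLi-cyl3} reduces to a single equation which can be solved for any $m$ and a suitable $p$, so non-real periods always exist. If $d_1(\lambda_0) \neq d_2(\lambda_0)$, Bezout gives
$$\{l_1 d_2(\lambda_0) - l_2 d_1(\lambda_0) : (l_1,l_2) \in \Z^2\} = c\Z,$$
so the first equation of \eqref{periodicity-cyl4} is equivalent to
$$m(q_1 - q_2)\cdot \frac{\mathrm{Im}\beta_2(2T,\lambda_0)\{d_1(\lambda_0)d_2(\lambda_0) + \tfrac{2\beta}{3}\}}{2\pi} \in \Z.$$
Since $q_1 \neq q_2$, this admits a nonzero integer solution $m$ if and only if $\frac{1}{2\pi}\mathrm{Im}\beta_2(2T, \lambda_0)\{d_1(\lambda_0)d_2(\lambda_0) + \tfrac{2\beta}{3}\}$ is rational, which is exactly the second clause of (2). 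Once such an $m$ and an integer solution $(l_1, l_2)$ of the first equation are fixed, the residual freedom $(l_1, l_2) \mapsto (l_1 + q_1 t, l_2 + q_2 t)$ runs $l_1 - l_2$ through an arithmetic progression of integers, so the second equation of \eqref{periodicity-cyl4} can be solved for some real $p$. This produces the desired non-real period and, read backwards, shows that (2) is also necessary given (1).

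The main obstacle is the Bezout bookkeeping in the nondegenerate case $d_1(\lambda_0) \neq d_2(\lambda_0)$: one must isolate the correct rationality condition on the continuous quantity $\mathrm{Im}\beta_2(2T, \lambda_0)\{d_1(\lambda_0) d_2(\lambda_0) + \tfrac{2\beta}{3}\}$ from the coupled integer constraints on $(m, l_1, l_2)$, and then confirm that the leftover integer freedom suffices to solve the second equation for the real parameter $p$. The clause $d_1(\lambda_0)=d_2(\lambda_0)$ in (2) does not in fact arise for $\lambda_0 \in S^1$ when $f$ is non-flat, by Lemma \ref{D-roots}(2), but is retained for completeness.
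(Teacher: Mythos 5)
Your proof is correct and follows essentially the same route as the paper: a torus is equivalent to having both a real period (equivalent to (1) via Corollary \ref{specialperiods}) and a non-real period, which is then extracted from the two equations in \eqref{periodicity-cyl4} by first solving for $m, l_1, l_2$ and then for $p$. The only difference is that you make explicit the Bezout bookkeeping that the paper's proof compresses into ``we can compute $m \neq 0$, $l_1$ and $l_2$ from the first equation,'' which is a welcome clarification but not a different argument.
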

\begin{proof}
We know that (1) is equivalent with the existence of a real period and $(2)$ follows for a non-real period by (\ref{periodicity-cyl4}). It thus remains to show that $(1)$ and $(2)$ together imply the existence of a non-real period.
First, if $d_1$ and $d_2$ are equal (for a fixed $\lambda = \lambda_0$), then
(\ref{mLi-cyl3}) actually is only one equation and one can compute $p$ for $m=1$. 
Actually we see from Lemma \ref{D-roots} that $D(\lambda)$ having multiple eigenvalues implies that the minimal Lagrangian surface is flat and needs to be a part of the Clifford torus.
 Assume now $d_1 \neq d_2$. Then we can compute $m \neq 0,$ $l_1$ and $l_2$ from the first equation in (\ref{periodicity-cyl4})
and then $p$ from the second equation in (\ref{periodicity-cyl4}).

\end{proof}

%%%%%%%%%%%%%%%%%%%%%%%
\subsubsection{The case of a real cubic form $\lambda^{-3}\psi$}
%%%%%%%%%%%%%%%%%%%%%%%

We know that in the case  of a real cubic form $\lambda^{-3}\psi$, the canonical lift $F$ is invariant under translations by $\omega = 4Ti$ (see Section \ref{ex:pipsi}).  

From Theorem \ref{periodlattice} we know that in the case under consideration
the period lattice is spanned by a real period and a non-real period with smallest positive imaginary part. This non-real period is thus either $4Ti$ or
of the form $b+m2Ti$ with $m = 2k +1$. Then we can assume that this second period is of the form $b + 2Ti$.

Moreover, with $\omega$ 
also $2\omega = 2b + 4Ti$ is a period, whence $2b$ is a period.
Since we can assume that either $b=0$ or $0 < b < p_f$, we obtain 
$b= \frac{1}{2} p_f$.

At any rate,  the quotient of the eigenvalues $d_1$ and $d_2$ of $D$ is rational.

\begin{proposition}
Keeping the definitions and the notation introduced for translationally equivariant minimal Lagrangian surfaces we obtain in the case of a real cubic form $\lambda^{-3}\psi$ 
the possible period lattices $\mathcal{L}(f)  = p_f\mathbb{Z} + 4Ti\mathbb{Z}$
and $\mathcal{L}(f) =p_f \mathbb{Z}+ (\frac{1}{2}p_f + 2Ti)\mathbb{Z}.$
In both cases, the quotient of the eigenvalues $d_1$ and $d_2$ of $D$ is rational.
Conversely, if the cubic form $\lambda^{-3}\psi$ is real and $d_1/d_2$ is rational, then the corresponding translationally equivariant minimal Lagrangian surface descends to some torus which is  defined by a lattice of the type given above.
\end{proposition}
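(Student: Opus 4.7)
The plan is to leverage Theorem \ref{periodlattice} together with the extra structure coming from the assumption that $\lambda^{-3}\psi$ is real. By that theorem we may write
\begin{equation*}
\mathcal{L}(f) = p_f\Z + \omega_f\Z,
\end{equation*}
with $p_f$ the smallest positive real period and $\omega_f$ the period of smallest positive imaginary part, say $\mathrm{Im}\,\omega_f = 2 m_f T$. The reality of the cubic form guarantees (via the discussion referenced from Section \ref{ex:pipsi}) that $4Ti \in \mathcal{L}(f)$. Writing $4Ti = k p_f + \ell\omega_f$ with integers $k,\ell$ and comparing imaginary parts yields $\ell m_f = 2$, so either $(\ell,m_f) = (1,2)$ or $(\ell,m_f) = (2,1)$.

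In the first subcase, $\omega_f - 4Ti = -kp_f$ is a real period, so after reducing $\omega_f$ modulo $p_f\Z$ one may take $\omega_f = 4Ti$, giving the lattice $p_f\Z + 4Ti\Z$. In the second subcase, write $\omega_f = b + 2Ti$; doubling yields $2\omega_f - 4Ti = 2b \in p_f\Z$, so $b \in \tfrac12 p_f\Z$. After reduction modulo $p_f$ the remaining possibilities are $b=0$ and $b = \tfrac12 p_f$, and the first of these must be ruled out (see the obstacle paragraph below) to isolate $b = \tfrac12 p_f$, producing the second listed lattice.

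For the eigenvalue statement, the existence of the real period $p_f$ invokes Corollary \ref{specialperiods}(1) at $\lambda = \lambda_0$ and forces $d_1(\lambda_0)/d_2(\lambda_0) \in \Q$. For the converse, assume $\lambda^{-3}\psi$ is real and $d_1/d_2$ is rational: Corollary \ref{specialperiods}(1) supplies a real period of $f$, and the reality assumption independently supplies the period $4Ti$; together they span a rank-two sublattice of $\mathcal{L}(f)$, so $f$ descends to a torus, and the direct part of the proof then pins down the period lattice to one of the two listed forms.

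The step I expect to be the main obstacle is the exclusion of the subcase $b=0$ in the case analysis, which would otherwise give $\omega_f = 2Ti$ and a lattice of the form $p_f\Z + 2Ti\Z$ not listed in the proposition. Resolving this requires a close reading of the symmetry of the canonical horizontal lift $F$ under translation by $2Ti$ versus $4Ti$ from Section \ref{ex:pipsi}: the reality of $\lambda^{-3}\psi$ yields genuine invariance of $F$ by $4Ti$, but translation by $2Ti$ only returns $F$ up to a nontrivial $U(1)$-phase, so that $2Ti$ does not descend to a period of $f = [F]$ in the situation under consideration. All remaining steps are bookkeeping within the framework already established in Theorems \ref{equicyl}, \ref{periodlattice} and \ref{torus condition}.
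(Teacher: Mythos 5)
Your overall route is the same as the paper's: invoke Theorem \ref{periodlattice} to write $\mathcal{L}(f)=p_f\Z+\omega_f\Z$, use the $4Ti$-periodicity coming from the reality of $\lambda^{-3}\psi$ to force $\mathrm{Im}\,\omega_f\in\{2T,4T\}$, and in the $2T$ case observe that $2\omega_f-4Ti=2b$ is a real period, hence $b\in\{0,\tfrac12 p_f\}$ modulo $p_f$; the rationality of $d_1/d_2$ and the converse are handled exactly as in the paper via Corollary \ref{specialperiods}(1). So structurally you match the paper, and you are right that the only non-bookkeeping step is ruling out $b=0$ (a point the paper passes over silently).

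However, the reason you give for excluding $b=0$ does not work as stated. You argue that translation by $2Ti$ returns $F$ ``up to a nontrivial $U(1)$-phase, so that $2Ti$ does not descend to a period of $f=[F]$.'' But $f$ is the projectivized map: if $F(x,y+2T)=e^{i\theta}F(x,y)$ for a single unimodular constant, then $[F(x,y+2T)]=[F(x,y)]$ and $2Ti$ \emph{would} be a period of $f$. A global scalar phase is never an obstruction here. The actual obstruction, which one reads off from \eqref{eq:F_psi II}, is that the three eigencomponents transform with \emph{different} signs: since $T=K/r$, the shift $y\mapsto y+2T$ sends $ry\mapsto ry+2K$, and $\sn(u+2K)=-\sn(u)$, $\cn(u+2K)=-\cn(u)$, while $\dn(u+2K)=\dn(u)$. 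Hence
\begin{equation*}
F(x,y+2T)=-c_1\sn(ry,k)e^{id_1x}l_1-c_2\cn(ry,k)e^{id_2x}l_2+c_3\dn(ry,k)e^{id_3x}l_3,
\end{equation*}
which is not proportional to $F(x,y)$ because $c_1,c_2,c_3$ are all nonzero for a non-flat surface (it would require the proportionality constant to be $-1$ and $+1$ simultaneously). This shows $2Ti\notin\mathcal{L}(f)$ and legitimately eliminates $b=0$, leaving only the two lattices listed in the proposition. With this correction your argument is complete and coincides with the paper's.
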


%%%%%%%%%%%%%%%
\section{Comparison with the work of  Castro-Urbano}
%%%%%%%%%%%%%%%

In this section, we will show how our approach relates to the one of  Castro-Urbano \cite{CU94}.
As before, also in this section we will consider the whole associated family.

To simplify notation, in this subsection we will (usually) not indicate dependence on variables like $z$, $\bar{z}$ or $\lambda$.

Let again $f :\C \rightarrow \C P^2$ denote  the associated family of  translationally equivariant 
minimal Lagrangian immersions with horizontal conformal lift $F$ and frame $\F$.

Then $f$ is generated by some matrix $D$ and 
$$\mathbb{F}(x,y)=e^{xD}\mathbb{F}(y)$$
holds.

The characteristic polynomial of $D$ is given by \eqref{eq:charpol},
therefore we immediately obtain
\begin{equation} \label{eq:cubic}
\partial_x^3 \F +\beta \partial_x \F+\mu_0 \F=0.
\end{equation}

\begin{remark}
We would like to point out that instead of using the third order  equation
above, in \cite{CU94} the authors prove the existence of a sixth order equation due to the real orthogonal frames they used. 
So from here on our computations are usually somewhat simpler, but follow a very similar idea.
\end{remark}

Equation  \eqref{eq:cubic} holds, of course, for each column of $\F$ separately. 
In particular, we know that the immersion $f(x,y)$ is given by
$$f(x,y)=[\mathbb{F}(x,y) e_3]=[e^{xD}\mathbb{F}(y)e_3].$$

Thus the horizontal conformal lift $F(x,y) = \F (x,y) e_3$ satisfies 

\begin{equation}\label{eq:cubicF}
F(x,y) = \exp(xD) F(y)
\end{equation}
and therefore also
\begin{equation*}
\partial_x^3 F +\beta \partial_x F- 2i \mathrm{Re}(\lambda^{-3}\psi) F=0.
\end{equation*}

Recall that for a minimal Lagrangian immersion  
$f: M\rightarrow \mathbb{C}P^2$ with induced metric $g=2e^udzd\bar{z}$, its horizontal 
lift $F: U\rightarrow S^5(1)\subset \mathbb{C}^3$ satisfying the  equations \eqref{conformal}, \eqref{eq:frame1}, \eqref{eq:UV1}
with $\psi$ defined by \eqref{eq:phipsi} gives an associated family of minimal Lagrangian surfaces with the cubic differential $-i\lambda^{-3}\psi$. Explicitly, the associated extended frame $F(z,\bar{z}, \lambda)$ satisfies
\begin{equation}\label{eq:frameCU}
\begin{split}
F_{zz}&=u_zF_z-e^{-u}i\lambda^{-3}\psi F_{\bar z},\\
F_{z\bar z}&=-e^u F,\\
F_{\bar{z}\bar{z}}&=-e^{-u}i\lambda^{3}\bar{\psi}F_{z}+u_{\bar z}F_{\bar z}.
\end{split}
\end{equation}

It is straightforward to rewrite the equations  \eqref{eq:frameCU} 
 involving derivatives for $z$ and $\bar z$ into
\begin{flalign}
 & F_{xx}=-ie^{-u}\mathrm{Re}(\lambda^{-3}\psi)F_x -\frac{u^{\prime}-2i e^{-u}\mathrm{Im}(\lambda^{-3}\psi)}{2}F_y -2e^u F, &\label{eq:realCU01}\\
&F_{xy}=\frac{u^{\prime}+2i e^{-u} \mathrm{Im}(\lambda^{-3}\psi)}{2} F_x+i e^{-u} \mathrm{Re}(\lambda^{-3}\psi)F_y, &\label{eq:realCU02}\\
&F_{yy}=i e^{-u} \mathrm{Re}(\lambda^{-3}\psi) F_x+\frac{u^{\prime}- 2 i e^{-u} \mathrm{Im}(\lambda^{-3}\psi)}{2} F_y-2e^u F. &\label{eq:realCU03}
\end{flalign}

We want to evaluate \eqref{eq:cubicF} by writing $F$ as a linear combination of eigenvectors of $D$.

It follows from \eqref{eq:charpol} that the eigenvalues $\mu_1=id_1$, $\mu_2=id_2$, $\mu_3=id_3$ satisfy
\begin{equation*}\label{eq:lambda_relation}
d_1+d_2+d_3=0, \quad d_1 d_2+d_2 d_3+d_3 d_1=-\beta,
\quad d_1 d_2 d_3=-2\mathrm{Re}(\lambda^{-3}\psi).
\end{equation*}

Let $l_1, l_2$ and $l_3$ denote an  orthonormal basis of  eigenvectors of $D(\lambda)$ for the eigenvalues 
$i d_1, \, id_2, \, id_3$, respectively.
Then there exist scalar functions $p_j(y)$ such that
\begin{equation*}
F(y)=p_1(y) l_1+p_2(y)l_2+p_3(y)l_3
\end{equation*}
holds.
As a consequence, for  $F(x,y) = \exp(xD) F(y)$ we obtain 
\begin{equation}\label{eq:decF}
F(x,y)=p_1(y)e^{i d_1 x}l_1+p_2(y)e^{i d_2 x}l_2+p_3(y) e^{i d_3 x}l_3.
\end{equation}

Next we evaluate equation \eqref{eq:realCU02} and obtain for $j = 1,2,3$ the scalar equations
\begin{equation}\label{eq:66'}
(d_j e^u -\mathrm{Re}(\lambda^{-3}\psi) )p_j^{\prime}= \frac{u^{\prime} e^u +2i \mathrm{Im}(\lambda^{-3}\psi)}{2}d_j p_j.
\end{equation}

\begin{remark}
\begin{enumerate}
\item  The equations \eqref{eq:frameCU} lead to three real differential equations  and  via \eqref{eq:decF} yield three scalar differential equations for the coefficient functions $p_j(y)$.
Two of these three differential equations are of first order and of the form
$A_j p_j^\prime = B_j p_j$ and the third one is a second order equation with leading coefficient $1$.
Since the two first order equations describe the same function $p_j$ we obtain for the equivalence of these two equations the  identity $A_1 B_2 = A_2 B_1$ which turns out to be 
\begin{equation}\label{eq:factor-nonzero}
[d_j e^u-\mathrm{Re}(\lambda^{-3}\psi)][d_j^2 e^u +\mathrm{Re}(\lambda^{-3}\psi)d_j-2e^{2u}]
=[\frac{1}{4} (u^{\prime})^2e^{2u}+(\mathrm{Im}(\lambda^{-3}\psi))^2] d_j.
\end{equation}

\item There are several cases that need to be distinguished:
\begin{enumerate}
\item The first case is, where the matrix $D(\lambda_0) $ is not invertible. In this case $\lambda_0^{-3}\psi$ is purely imaginary and one eigenvalue vanishes, say $i d_1(\lambda_0) = 0$, and the other two eigenvalues are 
$id_\pm(\lambda_0) = \pm i \sqrt{\beta}$.
This case will be discussed separately. Therefore, in the rest of this remark we will always assume that all eigenvalues are non-zero at all values of $\lambda$ considered.

\item  Assuming now that no eigenvalue $d_j(\lambda_0)$ vanishes, it can happen that two eigenvalues coalesce. In this case we know from Lemma \ref{D-roots}
that the minimal Lagrangian surface is flat, a case which is no longer considered at this point. Therefore, from now on we will assume that all eigenvalues are different and non-zero at all values of $\lambda$ considered.
\end{enumerate}

\item There are two more cases to distinguish. Namely the cases where $\lambda_0^{-3}\psi$ is real and non-real and non-purely-imaginary.  
These two cases will also be treated separately below.

\item  In view of \eqref{eq:factor-nonzero}
it turns out to be useful to note  that if  $d_j(\lambda_0) \neq 0$, and if  $d_j (\lambda_0) e^{u(y_0)} -\mathrm{Re}(\lambda_0^{-3}\psi) = 0$,  then $\lambda_0^{-3}\psi$ is real and $u^\prime (y_0) = 0$.

\item One could evaluate the  remaining two equations \eqref{eq:realCU01} and \eqref{eq:realCU03} 
in an analogous manner. However, it turns out that these two equations do not produce any new information,
if  $d_j(\lambda_0) \neq 0$ and $d_j (\lambda_0)e^u -\mathrm{Re}(\lambda_0^{-3}\psi) \neq 0$.

\end{enumerate}
\end{remark}

%%%%%%%%%%%%%%%%%%%%%
\subsection{The case of non-invertible $D(\lambda)$}
%%%%%%%%%%%%%%%%%%%%%%%%

In view of \eqref{eq:charpol} it is clear that 
$\mathrm{Re}(\lambda_0^{-3}\psi)=0$ is equivalent to that $D(\lambda_0)$ is not invertible and to $\lambda_0^{-3}\psi$ being purely imaginary.  

Let's assume now that $d_1(\lambda_0) = 0$.  Then, fixing $\lambda = \lambda_0$,  the eigenvalues of $D$ are, without loss of generality, $id_1 = 0, id_2 = i \sqrt{\beta}$ and  
$id_3 =  -i\sqrt{\beta}$. 

We note that, in full generality, the equation \eqref{eq:realCU01} translates, in view of \eqref{eq:decF},  to 
\begin{equation} \label{eq:caseFxx}
( u^\prime e^u - 2 i \mathrm{Im}(\lambda^{-3} \psi)) p_j^\prime = 2( d_j^2 e^u + \mathrm{Re}(\lambda^{-3}\psi)d_j - 2 e^{2u}) p_j.
\end{equation}

Note that here the coefficient of $p_j^{\prime}$ on the left side does not vanish in the case under consideration, where $\mathrm{Re}(\lambda^{-3}\psi) =0$.)
Writing out  the three equations of \eqref{eq:caseFxx} it is easy to observe that the differential equations for $p_2$ and $p_3$ are equal. Therefore, the solutions $p_2$ and $p_3$ of these differential equations only differ by some constant. But then, say $p_3 = \alpha p_2$, we obtain $|\alpha| = 1$, since $F$ has length $1$. As a consequence, up to some isometry of $\C P^2$ the surface only takes value in some hyperplane. This is a case we are not interested in.

%%%%%%%%%%%%%%%%%%
\subsection{The case of non-real $\lambda^{-3}\psi$} 
\label{subsec: non-real}
%%%%%%%%%%%%%%%%%%

Now let's assume that $\lambda^{-3}\psi$ is not real. Then $d_j e^u -\mathrm{Re}(\lambda^{-3}\psi)\neq 0$.
We obtain
\begin{equation*}\label{sol_pj}
p_j(y)= \rho_j (d_je^u-\mathrm{Re}(\lambda^{-3}\psi))^{\frac{1}{2}} e^{i\int_0^y \frac{d_j \mathrm{Im}(\lambda^{-3}\psi)}{d_j e^u-\mathrm{Re}(\lambda^{-3}\psi)}ds},
\end{equation*}
where $\rho_1, \rho_2, \rho_3$ are independent of $z$.

To determine the coefficients $\rho_j$, we recall that the lift $F$ is conformal and horizontal, whence we have
\begin{equation*}\label{eq:productFxFy}
\begin{split}
& F\cdot \bar{F}=1,\,\, F_x\cdot \bar{F}=F_y\cdot \bar{F}=F_x\cdot \overline{F_y}=0,\\
& F_x\cdot\overline{F_x}=F_y\cdot \overline{F_y}=2e^u.
\end{split}
\end{equation*}
These equations lead to the following $3$ equations:

\begin{enumerate}
\item $\sum_{j=1}^3 |\rho_j|^2(d_je^u-\mathrm{Re} (\lambda^{-3}\psi))=1,$\\
\item $\sum_{j=1}^3 d_j |\rho_j|^2(d_je^u-\mathrm{Re} (\lambda^{-3}\psi))=0,$\\
\item $\sum_{j=1}^3 d_j^2  |\rho_j|^2(d_je^u-\mathrm{Re} (\lambda^{-3}\psi))=2e^u.$
\end{enumerate}

Since the Vandermonde matrix built from distinct $d_1, d_2,d_3$ is  
invertible, if the surface is not flat
(see Lemma \ref{D-roots}), this system of equations can be solved for $ |\rho_j|^2 (d_j e^u - \mathrm{Re}(\lambda^{-3}\psi))$.

Since  $d_je^u-\mathrm{Re}(\lambda^{-3}\psi)\neq 0$,   we obtain 
$$|\rho_j|^2=\frac{1}{d_j^3-\mathrm{Re}(\lambda^{-3}\psi)}, \quad j=1,2,3.$$
Choose $\rho_j=\frac{1}{(d_j^3-\mathrm{Re}(\lambda^{-3}\psi))^{\frac{1}{2}}}$, 
then we have
\begin{equation}\label{eq:F(x,y,lambda)}
F(x,y, \lambda)= h_1(y)e^{id_1 x +iG_1(y)} l_1 +   h_2(y)e^{id_2 x +iG_2(y)} l_2 + h_3(y)e^{id_3 x +iG_3(y)}l_3,
\end{equation}
where 
\begin{equation}\label{eq:h_j G_j}
h_j(y)=\left(\frac{d_je^u-\mathrm{Re}(\lambda^{-3}\psi)}{d_j^3-\mathrm{Re}(\lambda^{-3}\psi)}\right)^{\frac{1}{2}},
\quad\quad
G_j(y)=\int_0^y \frac{d_j \mathrm{Im}(\lambda^{-3}\psi)}{d_j e^u-\mathrm{Re}(\lambda^{-3}\psi)}ds.
\end{equation}
Note that also the eigenvalues $d_j$ depend on $\lambda$.
In terms of the orthonormal basis of eigenvectors of $D(\lambda)$ chosen above, we can assume, 
by the discussion just above, that the phase factor of the $l_j$  is chosen such that $h_j$ is positive and real. 
We will therefore continue to denote this basis by the  letters $l_1,l_2$ and $l_3$.

Next we want to consider $F(x+p, y+ m2T, \lambda)$.
At one hand we obtain
\begin{equation} \label{equ1}
   F(x+p,y+m2T,\lambda)=\sum_{j=1}^3 h_j(y + m2T,\lambda)e^{id_j(\lambda) (x+p) +iG_j(y + m2T,\lambda))}l_j(\lambda),
\end{equation}
and on the other hand we obtain
\begin{equation*} \label{equ2}
F(x+p,y+m2T,\lambda) = M(\lambda) F(x,y,\lambda).
\end{equation*}

Using the simple equations $h_j(y + m2T,\lambda) = h_j(y,\lambda)$, 
since $e^u$ is $2T-$periodic, and  the obvious identity $G_j(y + m2T,\lambda) = G_j(y,\lambda)+m G_j(2T,\lambda)$,
we see that the coefficient for $l_j(\lambda)$ in the equation (\ref{equ1}) actually is of the form 
\begin{equation}\label{equ3}
e^{id_j(\lambda)p + i mG_j(2T,\lambda))} \cdot h_j(y,\lambda) e^{i(d_j(\lambda)x + G_j(y,\lambda))}.
\end{equation} 

Since the $l_j(\lambda)$ are eigenvectors of $M(\lambda)$,
the left factors of these expressions in \eqref{equ3} 
are exactly the eigenvalues of $M(\lambda)$.
Hence comparing with \eqref{eig-mono}, we obtain 

\begin{theorem}
Retaining the notation used so far we obtain for every translation $p + m2Ti$ and $j = 1,2,3$ the equation
\begin{equation*}
\begin{split}
p d_j(\lambda) + mG_j(2T,\lambda) \equiv
pd_j(\lambda)-m\lbrack\mathrm{Re}\beta_1(2T, \lambda) d_j(\lambda)\\
+\mathrm{Im}\beta_2(2T,\lambda)(-d_j(\lambda)^2+\frac{2\beta}{3})\rbrack \mod 2 \pi \mathbb{Z}.
\end{split}
\end{equation*}
\end{theorem}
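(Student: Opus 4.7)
The plan is to compute $F(x+p,\, y+m2T,\, \lambda)$ in two different ways and read off the desired congruence by comparing coefficients in the eigenbasis of $D(\lambda)$. Throughout I restrict to the open, dense subset of $\lambda \in S^1$ on which $D(\lambda)$ is regular semi-simple, so that by Lemma \ref{D-roots} the eigenvectors $l_1, l_2, l_3$ form a basis of $\mathbb{C}^3$. I also implicitly assume, as in Section \ref{subsec: non-real}, that $\lambda^{-3}\psi$ is non-real, so that the explicit formula \eqref{eq:F(x,y,lambda)} is available and the amplitudes $h_j(y,\lambda)$ are well-defined.

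The first computation substitutes $(x+p,\, y+m2T)$ directly into \eqref{eq:F(x,y,lambda)}. Since the metric factor $e^u$ is $2T$-periodic in $y$ (Section \ref{subsec:metric}), the amplitude $h_j$ inherits the same periodicity. The phase $G_j(y,\lambda)$ defined in \eqref{eq:h_j G_j} is the integral of a $2T$-periodic integrand over $[0,y]$, so it satisfies the quasi-periodicity
\[
G_j(y+m2T,\lambda) \;=\; G_j(y,\lambda) + m\, G_j(2T,\lambda).
\]
Consequently, the $l_j$-component of $F(x+p,\, y+m2T,\, \lambda)$ equals the $l_j$-component of $F(x,y,\lambda)$ multiplied by $\exp\bigl(i\, d_j(\lambda)\, p + i\, m\, G_j(2T,\lambda)\bigr)$.

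The second computation uses the equivariance identity $F(x+p,\, y+m2T,\, \lambda) = M(\lambda) F(x,y,\lambda)$. Applying $M(\lambda)$ to \eqref{eq:F(x,y,lambda)} multiplies the $l_j$-component by the corresponding eigenvalue of $M(\lambda)$, which by \eqref{eig-mono} is
\[
\exp\!\Bigl( i\bigl\{\, p\, d_j(\lambda) - m[\mathrm{Re}\beta_1(2T,\lambda)\, d_j(\lambda) + \mathrm{Im}\beta_2(2T,\lambda)\bigl(-d_j(\lambda)^2 + \tfrac{2\beta}{3}\bigr)]\,\bigr\}\Bigr).
\]
Equating the two expressions and taking logarithms yields the claimed congruence modulo $2\pi\mathbb{Z}$. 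I do not anticipate a serious obstacle: the only issue is the phase ambiguity in the square-root factors defining $h_j$ and in the scaling of the eigenvectors $l_j$, but since the same choices are used on both sides of the comparison, these ambiguities cancel. The cases where \eqref{eq:F(x,y,lambda)} breaks down, namely non-invertible $D(\lambda)$ and real $\lambda^{-3}\psi$, are covered by the separate analyses in the preceding subsections and would require an analogous but distinct argument there.
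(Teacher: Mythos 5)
Your argument coincides with the paper's own proof: both compute $F(x+p,\,y+m2T,\,\lambda)$ once from the explicit eigenbasis expansion using the $2T$-periodicity of $h_j$ and the quasi-periodicity $G_j(y+m2T,\lambda)=G_j(y,\lambda)+mG_j(2T,\lambda)$, and once via the monodromy $M(\lambda)$ acting by its eigenvalues \eqref{eig-mono} on the $l_j$, then compare coefficients. The proposal is correct and essentially identical to the paper's reasoning.
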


Actually we can directly show that $G_j(2T,\lambda)+\mathrm{Re}\beta_1(2T, \lambda) d_j(\lambda)
+\mathrm{Im}\beta_2(2T,\lambda)[-d_j(\lambda)^2+\frac{2\beta}{3}]=0$ for each $j$.
Note that by summing up the three equations we obtain 
\begin{equation}\label{G_j sum}
G_1(2T,\lambda) + G_2(2T, \lambda) + G_3(2T,\lambda) = 0 %\mod 2 \pi \mathbb{Z}
\end{equation}
for all $\lambda$.

From the argument in Section \ref{subsec: equiv tori}, if $f$ descends to a torus, then
 $d_1/d_2$ is rational. Moreover, if there exists some $p\in \mathbb{R}$ and $m\in \mathbb{Z}$ such that $f(x+p, y+m2T)=f(x,y)$, then
$$p d_j(\lambda)+m G_j(2T,\lambda)\in 2\pi \mathbb{Z}$$
for $j=1,2,3$.
From $d_1+d_2+d_3=0$ and \eqref{G_j sum}, we can easily obtain that
$$\frac{1}{2\pi} (\frac{d_2}{d_1} G_1(2T,\lambda)-G_2(2T,\lambda))$$ is rational. The converse obviously also holds.
So we have

\begin{theorem}[\cite{CU94}]
If the cubic form $\lambda^{-3}\psi$ of a translationally equivariant  minimal Lagrangian surface $f$ is not real, then the canonical horizontal lift $F$ of $f$ has the form of \eqref{eq:F(x,y,lambda)}.
In this case, $f$ descends to a torus if and only if both $d_1/d_2$ and
$\frac{1}{2\pi} (\frac{d_2}{d_1} G_1(2T)-G_2(2T))$ are rational. 
\end{theorem}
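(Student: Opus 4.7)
The plan has two stages: first, to derive the explicit eigenvector expansion \eqref{eq:F(x,y,lambda)} of $F$, and then to extract the torus condition from it. For the first stage, I would start from the eigendecomposition $F(y) = \sum_{j=1}^3 p_j(y) l_j$ of the horizontal lift at $x=0$ in the orthonormal eigenbasis of $D(\lambda)$. Applying $\exp(xD)$ yields $F(x,y) = \sum_j p_j(y) e^{i d_j x} l_j$. The hypothesis that $\lambda^{-3}\psi$ is not real means $d_j e^u - \mathrm{Re}(\lambda^{-3}\psi)$ is nowhere zero (by item (4) of the remark preceding this subsection), so I can divide \eqref{eq:66'} through and integrate to obtain $p_j(y) = \rho_j (d_j e^u - \mathrm{Re}(\lambda^{-3}\psi))^{1/2} e^{iG_j(y)}$ with $G_j$ as in \eqref{eq:h_j G_j}. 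The three conformality/horizontality identities become a $3\times 3$ linear system in the quantities $|\rho_j|^2(d_j e^u - \mathrm{Re}(\lambda^{-3}\psi))$ with Vandermonde coefficient matrix in $d_1,d_2,d_3$, nonsingular by Lemma \ref{D-roots} (regular semisimplicity); inverting gives $|\rho_j|^2 = (d_j^3 - \mathrm{Re}(\lambda^{-3}\psi))^{-1}$. Absorbing the constant phases of the $\rho_j$ into the eigenvectors $l_j$ produces the stated form \eqref{eq:F(x,y,lambda)}.

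For the second stage, comparing coefficients of $l_j$ in $F(x+p, y+m2T, \lambda) = M(\lambda) F(x,y,\lambda)$, using the $2T$-periodicity of $e^u$ (hence of $h_j$) and the additive law $G_j(y+m2T) = G_j(y) + mG_j(2T)$, shows that $e^{i[pd_j + mG_j(2T)]}$ is the $j$-th eigenvalue of $M(\lambda)$. Matching against \eqref{eig-mono} gives
\[
pd_j + mG_j(2T) \equiv p d_j - m\bigl[\mathrm{Re}\beta_1(2T)d_j + \mathrm{Im}\beta_2(2T)(-d_j^2 + \tfrac{2\beta}{3})\bigr] \pmod{2\pi\Z}.
\]
Since $\sum d_j = 0$ and $\sum G_j(2T) = 0$ by \eqref{G_j sum}, the three periodicity conditions $pd_j + mG_j(2T) \in 2\pi\Z$ are equivalent to any two of them, say $j=1,2$.

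For the $(\Rightarrow)$ direction, a torus admits two $\R$-linearly independent periods, an integer combination of which has vanishing imaginary part and is nonzero; by Corollary \ref{specialperiods}(1) this real period forces $d_1/d_2 \in \Q$. From a non-real period $p + m2Ti$ with $m\neq 0$, I write $pd_j + mG_j(2T) = 2\pi n_j$ for $j=1,2$, eliminate $p$ via $d_2\cdot(j{=}1) - d_1\cdot(j{=}2)$, and divide by $2\pi d_1$ to get
\[
\tfrac{m}{2\pi}\Bigl(\tfrac{d_2}{d_1} G_1(2T) - G_2(2T)\Bigr) = n_1 \tfrac{d_2}{d_1} - n_2 \in \Q,
\]
so the second rationality follows since $m\neq 0$. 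For $(\Leftarrow)$, the first rationality yields a real period by Corollary \ref{specialperiods}(1); the second rationality turns the search for a non-real period into a single Diophantine equation in $m, n_1, n_2$ whose solvability with $m \neq 0$ follows from clearing denominators, after which $p$ is read off from either of the $j=1,2$ equations, producing a non-real period independent of the real one.

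The main obstacle is verifying the matching identity $G_j(2T) + \mathrm{Re}\beta_1(2T) d_j + \mathrm{Im}\beta_2(2T)(-d_j^2 + \tfrac{2\beta}{3}) = 0$ asserted immediately before \eqref{G_j sum}, which is what synchronizes the two parallel descriptions of the monodromy (via $\beta_1,\beta_2$ on the one hand and via the integrals $G_j$ on the other). This requires careful bookkeeping of the change of basis between the gauged frame $\F$ and the eigenbasis $\{l_j\}$ across the integrals in \eqref{eq:equiv_beta} and \eqref{eq:h_j G_j}. Once it is established, everything else is linear algebra and elementary number theory.
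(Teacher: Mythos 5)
Your proposal follows the paper's own argument essentially verbatim: the same integration of \eqref{eq:66'} under the non-vanishing of $d_je^u-\mathrm{Re}(\lambda^{-3}\psi)$, the same Vandermonde determination of $|\rho_j|^2$, the same absorption of phases into the $l_j$, the same comparison of the two computations of the monodromy eigenvalues, and the same elimination of $p$ yielding the two rationality conditions (the paper dismisses the converse as obvious, and your Diophantine argument is a correct way to fill that in). The only quibble is that the exact identity you single out as the main obstacle is not actually needed for this theorem: the congruence mod $2\pi\mathbb{Z}$ comes for free from the fact that both expressions exponentiate to the same eigenvalue of $M(\lambda)$, and one can even read the periodicity condition $pd_j+mG_j(2T)\in 2\pi\mathbb{Z}$ directly off \eqref{eq:F(x,y,lambda)} without invoking $\beta_1,\beta_2$ at all.
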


%%%%%%%%%%%%%%%%%%%%%%%%%%%%%
\subsection{The case of real $\lambda^{-3}\psi$}
\label{ex:pipsi}
%%%%%%%%%%%%%%%%%%%%%%%%%%%%

Retaining the notation used so far,  we assume in this subsection that $\lambda^{-3}\psi$ is real.
In this case, $d_j e^u -\mathrm{Re}(\lambda^{-3}\psi)$ can vanish at some points.

Recall that $a_1, a_2, -a_3$ are the roots of $w^3-\frac{\beta}{2}w^2+\frac{|\psi|^2}{2}=0$ in Section \ref{subsec:metric}.
We see that the roots of 
the characteristic polynomial \eqref{eq:charpol} of $D(\lambda)$ 
can be given by
\begin{equation*}
\mu_1=id_1=\frac{i \lambda^{-3}\psi}{a_1}, \quad \mu_2=id_2=\frac{i \lambda^{-3}\psi}{a_2} , \quad 
\mu_3=id_3=-\frac{i \lambda^{-3}\psi}{a_3}.
\end{equation*}

Recall the following properties of the Jacobi elliptic functions
\begin{eqnarray*}
&&\sn^2 z+\cn^2 z=1, \, k^2 \sn^2z+\dn^2 z=1,\\
&&\frac{d}{dz} \sn z=\cn z \dn z, \, \frac{d}{dz} \cn z=-\sn z \dn z,\, \frac{d}{dz} \dn z=-k^2 \sn z \cn z. %,\\
\end{eqnarray*}
Taking into account formulas \eqref{eq:equiv_u0}, \eqref{eq:k}, \eqref{eq:q&r}, we can rewrite \eqref{eq:66'} as
\begin{equation*}
\begin{split}
 p_1^{\prime}(y)\sn(ry, k)&=p_1(y) \frac{d}{dy}\sn(ry, k),\\ 
 p_2^{\prime}(y)\cn(ry, k)&=p_2(y) \frac{d}{dy}\cn(ry, k),\\
  p_3^{\prime}(y)\dn(ry, k)&=p_3(y) \frac{d}{dy}\dn(ry, k).
\end{split}
\end{equation*}
Integration gives $$p_1(y)=c_1 \sn(ry, k), \, p_2(y)=c_2 \cn(ry, k), \, p_3(y)=c_3 \dn(ry, k),$$
where $c_1$, $c_2$ and $c_3$ are constant complex numbers.

Thus by using an analogous argument involving the Vandermonde matrix given in Subsection \ref{subsec: non-real},  we obtain the constants $c_1, c_2, c_3$ from \eqref{eq:h_j G_j}: 
\begin{equation*}\label{eq:c}
c_1=a_1 \sqrt{\frac{a_1-a_2}{a_1^3-|\psi|^2}},\quad
c_2= a_2\sqrt{\frac{a_1-a_2}{|\psi|^2-a_2^3}},\quad
c_3= a_3\sqrt{\frac{a_1+a_3}{|\psi|^2+a_3^3}}.
\end{equation*}
This yields the canonical horizontal lift of the associated minimal Lagrangian surface
\begin{equation}\label{eq:F_psi II}
F(x,y,\lambda)=c_1 \sn(ry, k) e^{id_1x} l_1 +  c_2 \cn(ry, k) e^{id_2x}l_2 + c_3 \dn(ry, k) e^{id_3x} l_3, 
\end{equation}
where 
$l_1, l_2, l_3$ is an orthonormal basis of  eigenvectors of $D$ for the eigenvalues 
$\mu_1, \, \mu_2, \, \mu_3$, respectively.

Based on our discussion in Section \ref{Sec:equiv cylinder tori}, we immediately obtain 
\begin{theorem}
If the cubic form $\lambda^{-3}\psi$ of a translationally equivariant  minimal Lagrangian surface $f$ is real, then the canonical horizontal lift $F$ of $f$ has the form
of \eqref{eq:F_psi II}
and satisfies $F(x, y+4T)=F(x,y)$.
In particular, $f$ is defined on the cylinder $ \mathcal{C} = \C / 4Ti \mathbb{Z}$.

If there also exists some $\tau\in \mathbb{R}$ such that $f(x+\tau, y)=f(x,y)$, then
$e^{id_1 \tau}=e^{id_2 \tau}=e^{id_3\tau}$, which implies  $d_1/d_2$ is rational.
In this case, $f$ descends to the torus $ \mathbb{T} = \C/\mathcal{L}$, where  $\mathcal{L}$
denotes the lattice $\mathcal{L} = 4Ti \mathbb{Z} + \tau \mathbb{Z}$.

Conversely, if $d_1/d_2$ is rational, then there exists some $\tau \in \R$ such that $f(x+\tau, y)=f(x,y)$ holds.
\end{theorem}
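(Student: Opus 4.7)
The plan is to prove the three parts of the theorem in sequence, starting from the general ODE \eqref{eq:66'} for the coefficient functions $p_j(y)$ derived earlier and then exploiting the special structure that appears when $\lambda^{-3}\psi$ is real.

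For the form of $F$, I would first specialize \eqref{eq:66'} to $\mathrm{Im}(\lambda^{-3}\psi)=0$ and use the explicit eigenvalue formulas $id_1=i\lambda^{-3}\psi/a_1$, $id_2=i\lambda^{-3}\psi/a_2$, $id_3=-i\lambda^{-3}\psi/a_3$ together with $e^u=a_1(1-q^2\sn^2(ry,k))$ to rewrite the prefactors $d_j e^u-\mathrm{Re}(\lambda^{-3}\psi)$ in terms of $\sn^2$, $\cn^2$, and $\dn^2$ (via the elliptic identities $\sn^2+\cn^2=1$ and $k^2\sn^2+\dn^2=1$, and the roots $a_1,a_2,-a_3$ of the cubic from Section \ref{subsec:metric}). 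After using $u'e^u=w'$ and $w'=-2a_1q^2r\,\sn\cn\dn$, the equations should collapse to the three separable ODEs displayed in Section \ref{ex:pipsi}, whose solutions are $p_1=c_1\sn(ry,k)$, $p_2=c_2\cn(ry,k)$, $p_3=c_3\dn(ry,k)$. The constants $c_j$ are then pinned down by the exact Vandermonde argument applied in Section \ref{subsec: non-real}: the horizontality and conformality of $F$ give three linear equations in $|c_j|^2$, and solving these (using $d_1d_2d_3=-2\lambda^{-3}\psi$ and the relations above) yields the stated expressions for $c_1,c_2,c_3$. Absorbing the resulting phases into the eigenvectors $l_j$ delivers \eqref{eq:F_psi II}.

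Periodicity in $y$ is then essentially automatic: since $\sn(\cdot,k)$ and $\cn(\cdot,k)$ have primitive period $4K$ and $\dn(\cdot,k)$ has period $2K$, and since $T=K/r$, each of the three elliptic factors is invariant under $y\mapsto y+4T$, so $F(x,y+4T)=F(x,y)$. In particular $f$ descends to the cylinder $\mathcal{C}=\mathbb{C}/4Ti\mathbb{Z}$.

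For the torus part, suppose $f(x+\tau,y)=f(x,y)$ with $\tau\in\mathbb{R}$. Evaluating \eqref{eq:F_psi II} at $(x+\tau,y)$ multiplies the coefficient of $l_j$ by $e^{id_j\tau}$, so $F(x+\tau,y)$ and $F(x,y)$ project to the same point in $\mathbb{C}P^2$ exactly when there is a common $\zeta\in S^1$ with $e^{id_j\tau}=\zeta$ for $j=1,2,3$; equivalently $(d_j-d_k)\tau\in 2\pi\mathbb{Z}$ for all $j,k$. Combining the two independent conditions $(d_1-d_2)\tau,(d_2-d_3)\tau\in 2\pi\mathbb{Z}$ with the trace relation $d_1+d_2+d_3=0$ yields that $d_1/d_2$ is a rational number (write $x=d_1/d_2$ and verify that $(x-1)/(x+2)\in\mathbb{Q}$ forces $x\in\mathbb{Q}$). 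For the converse, if $d_1/d_2\in\mathbb{Q}$ then, using $d_3=-d_1-d_2$, all three real numbers $d_1,d_2,d_3$ are integer multiples of a common $c\in\mathbb{R}$; choosing $\tau=2\pi/c$ makes every $e^{id_j\tau}$ equal to $1$, so $F(x+\tau,y)=F(x,y)$ and the claim follows, with $f$ descending to the torus $\mathbb{C}/\mathcal{L}$ for $\mathcal{L}=4Ti\mathbb{Z}+\tau\mathbb{Z}$.

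The main obstacle is the first part: one has to carefully verify that the three ODEs \eqref{eq:66'} really do collapse to the clean elliptic-function form claimed, because one must simultaneously match the prefactors $d_je^u-\mathrm{Re}(\lambda^{-3}\psi)$ against the correct one of $\sn^2,\cn^2,\dn^2$ and handle the sign conventions coming from $a_3$ entering with the opposite sign. Once that reduction is clean, the Vandermonde normalization and the periodicity/rationality arguments are routine.
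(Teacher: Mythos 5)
Your proposal is correct and follows essentially the same route as the paper: specializing \eqref{eq:66'} using the explicit eigenvalues $id_j=\pm i\lambda^{-3}\psi/a_j$ and \eqref{eq:equiv_u0} to get the three separable $\sn$, $\cn$, $\dn$ equations, fixing the constants by the same Vandermonde normalization as in the non-real case, and then reading off the $4T$-periodicity and the rationality criterion for $d_1/d_2$ from the eigenvector expansion. The only differences are cosmetic (e.g.\ your algebraic verification that $e^{id_1\tau}=e^{id_2\tau}=e^{id_3\tau}$ together with $d_1+d_2+d_3=0$ forces $d_1/d_2\in\Q$ is spelled out slightly more explicitly than in the paper, which leaves it implicit).
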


%%%%%%%%%%%%%%%%%%%%%%
\begin{acknow}
This work has been done during the first author's visits at Tsinghua University and the second author's visit at TU-M\"{u}nchen. The authors would like to thank both institutions for their generous support.
Most of the results of this paper were reported by the second author during the 10th geometry conference for the friendship between China and Japan in 2014. She would like to thank the organizers for the invitation to the conference. This work is partially supported by NSFC grant No.~11271213.
\end{acknow}

%%%%%%%%%%%%%%%%%%%%%%%%%%%%%%%%%%%%%%%%


\begin{thebibliography}{99}

\bibitem{BuKi}{Burstall, F.E., Kilian, M.: Equivariant harmonic cylinders.
Q. J. Math. \textbf{57}, 449--468 (2006)}

\bibitem{CU94}{Castro, I.,  Urbano, F.:
New examples of minimal Lagrangian tori in the complex projective plane.
Manuscirta Math. \textbf{85}, no.3-4, 265--281 (1994)}

\bibitem{DPW}{Dorfmeister, J.,  Pedit, F.,  Wu, H.:
Weierstrass type representation of harmonic maps into symmetric spaces.
Comm. Anal. Geom. \textbf{6}, 633--668 (1998)}

\bibitem{Farkas-Kra}{Farkas, H.M., Kra, I.:
Riemann Surfaces, Springer, Berlin, Heidelberg, New York (1991) }

\bibitem{Haskins_Am04}{Haskins, M.:
Special Lagrangian cones.
Amer. J. Math. \textbf{126},  no.4, 845--871 (2004)}

\bibitem{Haskins_Inv04}{Haskins, M.:
The geometric complexity of special Lagranian $T^2$-cones.
Invent. Math. \textbf{157}, no.1, 11--70 (2004)}

\bibitem{Joyce}{Joyce, D.:
 Special Lagrangian 3-folds and integrable systems.
In: Surveys on geometry and integrable systems, 189--233,
 Adv. Stud. Pure Math., \textbf{51}, Math. Soc. Japan, Tokyo (2008)}

\bibitem{LOY}{Ludden, G.D., Okumura, M., Yano, K.:
A totally real surface in $\C P^2$ that is not totally geodesic.
Proc. Amer. Math. Soc. \textbf{53}, 186--190 (1975)}

\bibitem{MM}{Ma, H.,  Ma, Y.:
Totally real minimal tori in $\mathbb{C}P^2$.
Math. Z. \textbf{249}, 241--267 (2005)}

\bibitem{McIn} {McIntosh, I,: 
Special Lagrangian cones in $\mathbb{C}^3$ and primitive harmonic maps.
J. London Math. Soc. (2) \textbf{67}, 769--789 (2003)}

\bibitem{Naitoh-Takeuchi82} Naitoh, H.,  Takeuchi, M.:
Totally real submanifolds and symmetric bounded domains.
Osaka Math. J. \textbf{19}, 717--731 (1982)

\bibitem{PS}{Pressley, A., Segal, G.:
Loop groups, Oxford Science Monographs, Oxford Science Publications (1998)}

\bibitem{Sharipov}{Sharipov, R.A.:
Minimal tori in the five-dimensional sphere in $\mathbb{C}^3$.
Theor. Math. Physics, \textbf{87}:1, 363--369 (1991)}

\bibitem{Yau1974} Yau, S.T.: Submanifolds with constant mean curvature. I.
Amer. J. Math. \textbf{96}, 346--366 (1974)


\end{thebibliography}
\end{document}